\def\tgrad{{\textup{grad}}}
\def\thess{{\textup{Hess}}}
\def\Crt{{\textup{Crt}}}
\def\Vol{{\textup{Vol}}}
\def\oR{\overline{\mathbb R}}
\def\cX{{\mathcal X}}
\def\cB{{\mathcal B}}
\def\cUo{{\mathcal U}}
\def\cTo{{\mathcal T}}
\def\cEo{{\mathcal E}}
\def\PC{{\mathcal C}}
\def\cP{{\mathcal P}}
\def\Mb{{\overline M}}
\def\Eb{{\overline E}}
\def\cUb{\overline { \mathcal U}}
\def\cTb{\overline { \mathcal T}}
\def\cEb{\overline { \mathcal E}}
\def\cA{{\mathcal A}}
\def\cI{{\mathcal I}}
\def\cU{{\mathcal U}}
\def\hbu{\hat{\boldsymbol u}}
\def\bH{{\boldsymbol H}}
\def\bU{{\boldsymbol U}}
\def\bA{{\boldsymbol A}}
\def\bB{{\boldsymbol B}}
\def\bC{{\boldsymbol C}}
\def\ba{{\boldsymbol a}}
\def\bb{{\boldsymbol b}}
\def\bc{{\boldsymbol c}}
\def\bx{{\boldsymbol x}}
\def\by{{\boldsymbol y}}
\def\tbg{\tilde{\boldsymbol g}}
\def\bg{{\boldsymbol g}}
\def\bv{{\boldsymbol v}}
\def\bu{{\boldsymbol u}}
\def\be{{\boldsymbol e}}
\def\bsigma{{\boldsymbol \sigma}}
\def\id{{\boldsymbol I}}
\def\cP{{\mathcal P}}
\def\bperp{{\boldsymbol \perp}}
\def\Proj{{\mathsf P}}
\def\Projp{{\mathsf P}^{\perp}}
\def\supp{{\rm supp}}
\def\He{{\boldsymbol J}}
\def\GOE{{\textup{GOE}}}
\def\normal{{\sf N}}
\def\knot{{0}}
\def\Seta{{\mathcal T_1}}
\def\Setb{{\mathcal T_2}}
\def\bG{{\boldsymbol G}}
\def\bY{{\boldsymbol Y}}
\def\bW{{\boldsymbol W}}
\def\bX{{\boldsymbol X}}
\def\Perm{{\mathfrak S}}
\def\sBayes{\mbox{\tiny\rm Bayes}}
\def\sML{\mbox{\tiny\rm ML}}
\def\reals{{\mathbb R}}
\begin{document}

\title{The landscape of the spiked tensor model}

\author{Gerard Ben Arous\thanks{Courant Institute of Mathematical Sciences, New York University}, \;\; Song Mei\thanks{Institute for Computational and Mathematical Engineering, Stanford University},\;\;
Andrea  Montanari\thanks{Department of Electrical Engineering and Department
  of Statistics, Stanford University}, \;and\;
Mihai Nica\thanks{Department of Mathematics, University of Toronto}}

\maketitle

\begin{abstract}
We consider the problem of estimating a large rank-one tensor  $\bu^{\otimes k}\in(\reals^{n})^{\otimes k}$, $k\ge 3$ in Gaussian noise. 
Earlier work characterized a critical signal-to-noise ratio $\lambda_{\mbox{\tiny\rm Bayes}}= O(1)$ above which an ideal estimator
achieves strictly positive correlation with the unknown vector of interest. Remarkably no polynomial-time algorithm is known that 
achieved this goal unless $\lambda\ge C n^{(k-2)/4}$ and even powerful semidefinite programming relaxations appear to fail for
$1\ll \lambda\ll n^{(k-2)/4}$.

In order to elucidate this behavior, we consider the maximum likelihood estimator, which requires maximizing a degree-$k$
homogeneous polynomial over the unit sphere in $n$ dimensions. We compute the expected number of critical points
and local maxima of this objective function and show  that it is exponential in the dimensions $n$, and give exact formulas for the exponential growth
rate. We show that (for $\lambda$ larger than a constant) critical points are either very close to the unknown vector $\bu$, or  are confined in a band of 
width $\Theta(\lambda^{-1/(k-1)})$  around the maximum circle that is orthogonal to $\bu$.
For local maxima, this band shrinks to be of size $\Theta(\lambda^{-1/(k-2)})$. 
These `uninformative' local maxima are likely to cause the failure of optimization algorithms.
\end{abstract}

\section{Introduction}

Non-convex formulations are the most popular approach for a number of problems across 
high-dimensional  statistics and machine learning. Over the last few years, substantial effort has been devoted to
establishing rigorous guarantees for these methods in the context of important applications. A small subset of examples include 
matrix completion \cite{MR2683452,ge2016matrix}, phase retrieval \cite{chen2017solving,sun2016geometric}, high-dimensional regression with missing data \cite{loh2012high}, two-layers neural networks 
\cite{janzamin2015beating,zhong2017recovery}, and so on.
The general picture that emerges from theses studies --as formalized in \cite{mei2016landscape}-- is that non-convex losses can sometimes be `benign,' and allow
for nearly optimal statistical estimation using gradient descent-type optimization algorithms. Roughly speaking,
this happens when the population risk does not have flat regions, i.e. regions in which the gradient is small and the Hessian is nearly rank-deficient. 

In this paper we explore the flipside of this picture, namely what happens when the population risk has large `flat regions.'  We focus on a simple problem,
tensor principal component analysis under the spiked tensor model, and show that the empirical risk can easily become extremely complex in these
cases. This picture matches recent computational complexity results on the same model.
 
The spiked tensor model \cite{richard2014statistical} captures --in a highly simplified fashion-- a number of statistical estimation tasks in
which we need to extract information from a noisy high-dimensional data tensor, see e.g.  \cite{li2010tensor,morup2011applications,liu2013tensor,kreimer2013tensor}.
We are given a tensor $\bY\in(\reals^n)^{\otimes k}$ of the form
\begin{align}
\bY = \lambda \bu^{\otimes k} + \frac{1}{\sqrt{2n}} \bW\, ,\label{eq:FirstSpiked}
\end{align}
where $\bW$ is a noise tensor,  and would like to estimate the unit vector $\bu \in \S^{n-1}$.
The parameter $\lambda \geq 0$ corresponds to the signal to noise ratio. The noise tensor $\bW \in (\R^n)^{\otimes k}$ is distributed as 
$\bW\eqnd \sum_{\pi \in \Perm_n} \bG^{\pi}/(k!)$, where $\{G_{i_1 \cdots i_k}\}_{1\leq i_1, \ldots, i_k \leq n} \simiid \normal(0,1)$, $\Perm_n$ are permutations of the set
$[n]$, and $(\bG^{\pi})_{i_1\cdots i_k} = G_{\pi(i_1) \cdots \pi(i_k)}$. Throughout the paper $k\ge 3$.

We say that the weak recovery problem is solvable for this model if there exists
an estimator (a measurable function) $\hbu :(\reals^n)^{\otimes k}\to \S^{n-1}$ such that
\begin{align}
\lim\inf_{n\to\infty}\E|\<\hbu(\bY),\bu\>| \ge \eps \, ,\label{eq:WeakRecovery}
\end{align}
for some $\eps>0$. It was proven in \cite{richard2014statistical}  that weak recovery is solvable provided $\lambda\ge \lambda_1(k)$ and
in \cite{montanari2015limitation} that it is unsolvable for $\lambda<\lambda_0(k)$,  for some constant $0<\lambda_0(k)<\lambda_1(k)<\infty$. In fact,
for $\lambda<\lambda_0(k)$ it is altogether impossible to distinguish between the distribution (\ref{eq:FirstSpiked}) and the null model $\lambda=0$.
A sharp theshold $\lambda_{\sBayes}(k)$ for the weak recovery problem
was established in \cite{lesieur2017statistical} (see also \cite{barbier2017layered} for related results), and better lower bounds
for the hypothesis testing problem were proved in \cite{perry2016statistical}.

In light of these contributions, it is fair to say that optimal statistical estimation for the model (\ref{eq:FirstSpiked}) is well understood. In contrast,
many questions are still open for what concerns computationally efficient procedures. Consider the maximum likelihood estimator, 
that requires solving
\begin{equation} \label{eqn:objective}
\begin{aligned}
\mbox{\rm maximize}& \quad f(\bsigma) =\<\bY, \bsigma^{\otimes k} \>, \\
\mbox{\rm subject to} & \quad \bsigma \in \S^{n-1}.
\end{aligned}
\end{equation}
It was shown in \cite{richard2014statistical} that the maximum likelihood estimator achieves weak recovery, cf. Eq.~(\ref{eq:WeakRecovery}), provided
that $\lambda>\lambda_{\sML}(k)$ for some constant\footnote{Indeed, an exact characterization of $\lambda_{\sML}(k)$ should be possible using
the `one-step replica symmetry breaking' formula proven in \cite{talagrand2006free}. A non-rigorous analysis of the implications of this formula was carried out 
in \cite{gillin2000p}, yielding $\lambda_{\sML}(k)= \lambda_{\sBayes}(k)$.} $\lambda_{\sML}(k)<\infty$. However solving the problem (\ref{eqn:objective})
(maximizing an homogeneous degree-$k$ polynomial over the unit sphere)
is NP-hard for all $k\ge 3$ \cite{bhattiprolu2016multiplicative}.

Note that the population risk associated to the problem (\ref{eqn:objective}) is 
\begin{align}
f_0(\bsigma) \equiv \E \<\bY, \bsigma^{\otimes k} \> = \lambda\<\bu,\bsigma\>^k\, .
\end{align}
For $k\ge 3$, the (Riemannian) gradient and Hessian of $f_0(\bsigma)$ vanishes on the hyperplane orthogonal to $\bu$:
$\{\bsigma\in\S^{n-1}:\;\<\bu,\bsigma\>=0\}$. In the intuitive language used above, the population risk has a large flat region.
Since most of the volume of the sphere concentrates around this hyperplane \cite{ledoux2005concentration}, this is expected to have a dramatic impact on
the optimization problem (\ref{eqn:objective}).

Polynomial-time computable estimators have been studied in a number of papers. In particular \cite{richard2014statistical} considers a spectral algorithm based on tensor
unfolding and proved that is succeeds for $k$ even, provided $\lambda\ge C\, n^{(k-2)/4}$. 
(Here and below, we denote by $C$ a constant that might depend on $k$ but is independent of $n$.)
This result was generalized in \cite{hopkins2015tensor} to arbitrary $k\ge 3$,  
using a sophisticated semidefinite programming relaxation from the sum-of-squares hierarchy.  A lower complexity spectral algorithm
that succeeds under the same condition was developed in
\cite{hopkins2016fast}, and further results can be found in 
\cite{anandkumar2016homotopy,bhattiprolu2016certifying}. However,
no polynomial-time algorithm is known that achieves weak recovery for $1\ll \lambda\ll n^{(k-2)/4}$, and it is possible that 
statistical estimation in the spiked tensor model is hard in this
regime. 

A large gap between known polynomial-time algorithms  and statistical
limits arises in the  tensor completion problem, which shares many similarities with the spiked tensor model 
\cite{gandy2011tensor,yuan2015tensor,montanari2016spectral}.  In the setting  of tensor completion, hardness under Feige's hypothesis was proven in \cite{barak2016noisy}
for a certain regime of the number of observed entries.

Here we reconsider the maximum likelihood estimator  and we explore the landscape of the optimization problem
 (\ref{eqn:objective}). In what regime it is hard to maximize the function $f(\,\cdot\,)$ for a typical realization of the random tensor $\bY$?
In \cite{richard2014statistical} 
a power iteration algorithm was studied that attempts to compute the maximum likelihood estimator, and it was proven that
it is successful for $\lambda\ge C\, n^{(k-2)/2}$. What is the origin of this threshold at $n^{(k-2)/2}$? In this  paper we compute
the expected number of critical points of the likelihood function $f(\bsigma)$ to the leading exponential order. 

\begin{figure}[t]
\includegraphics[width=0.45\textwidth]{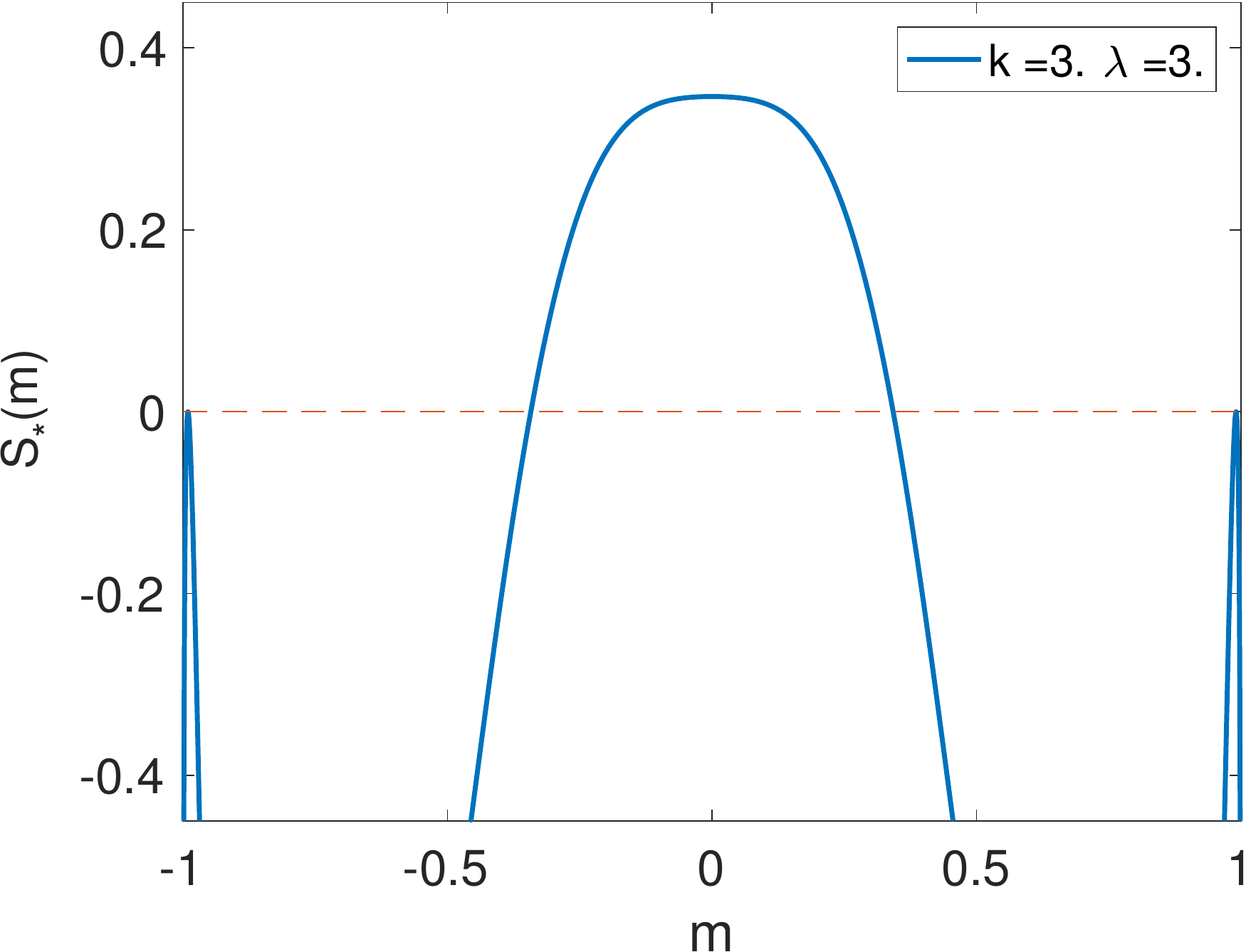}\hspace{0.5cm}
\includegraphics[width=0.45\textwidth]{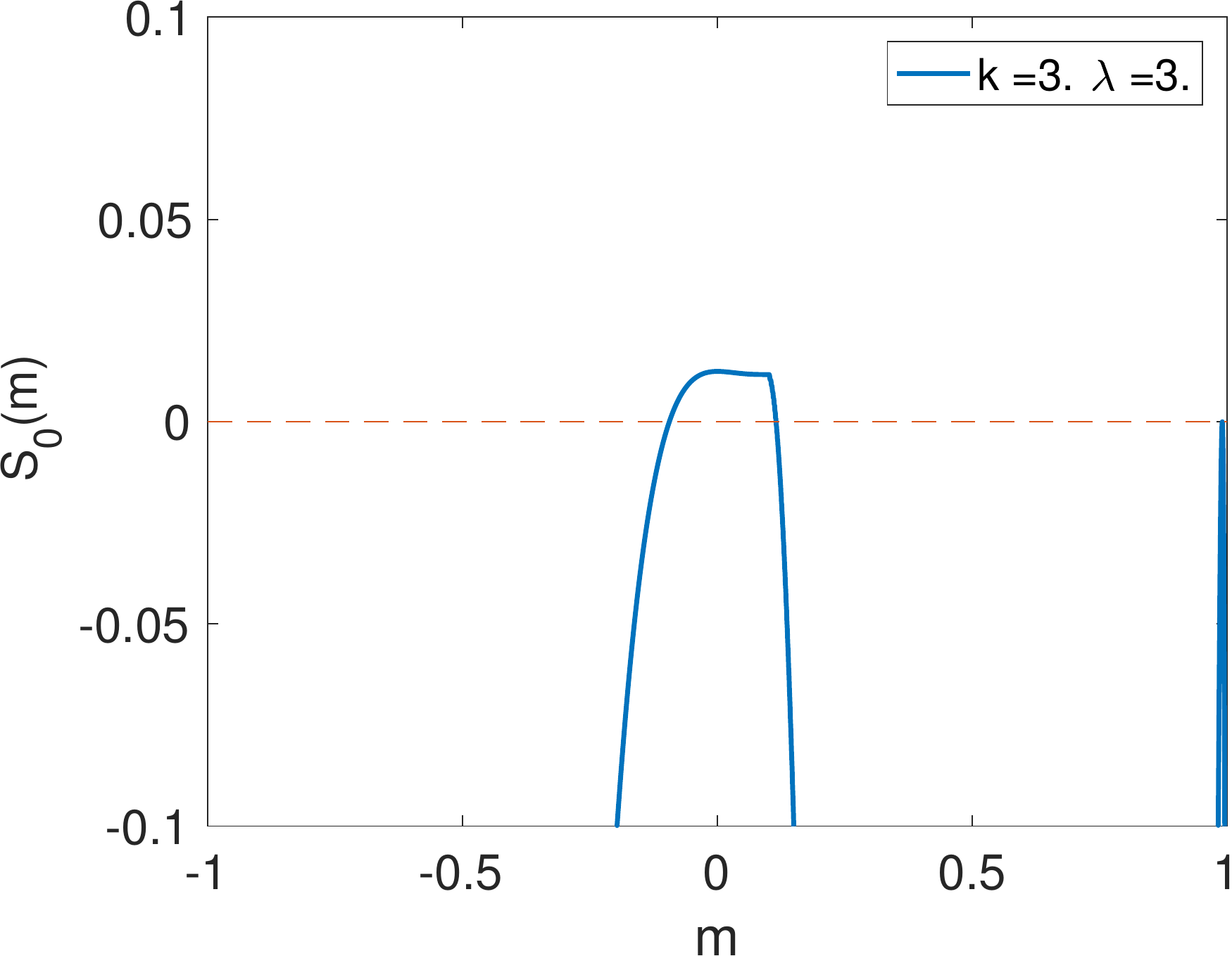}
\caption{Complexity of the spiked tensor model of order $k = 3$ at signal-to-noise ratio $\lambda = 3$:
exponential growth rate of the number of critical points $\bsigma\in\S^{n-1}$, as a function of the scalar product $m=\<\bu,\bsigma\>$.
Left: complexity for the total number of critical points
$S_\star(m)$. Right: complexity for local maxima $S_\knot(m)$.}\label{fig:landscapeFirst}
\end{figure}
Let us summarize the qualitative picture that emerges from our
results. For clarity of exposition, we summarize only our results on local maxima, but similar results will be presented about
generic critical points. 

The expected number of local maxima grows exponentially with the dimension  $n$. We compute the exponential growth rate,
denoted  by  $S_\knot(m,x)$, as a function of the value of the cost function  $x = f(\bsigma)$ and of the scalar product $m = \<\bsigma,\bu\>$.
Namely,  the expected number of local maxima with $f(\bsigma)\approx x$ and $\<\bsigma,\bu\>\approx m$ is $\exp\{nS_\knot(m,x)+o(n)\}$,
with $S_\knot(m,x)$ given explicitly below.  
The exponent $S_\knot(m,x)$ and its variants $S_\knot(m)$, $S_\star(m,x)$, and so on, are referred to as `complexity' functions.
In Figure \ref{fig:landscapeFirst} we plot $S_\knot(m) = \max_x S_\knot(m,x)$, which is the exponential growth rate of the number of local maxima
with scalar product $\<\bsigma,\bu\>\approx m$, for the case $k=3$, $\lambda=3$. (We also plot the analogous quantity for general critical points, $S_\star(m)$.)

The expected number of local maxima with scalar product $m=\<\bsigma,\bu\>\approx 0$,
i.e. lying close to the space orthogonal to the unknown vector $\bu$, is exponentially large. The complexity function $S_0(m)$ decreases as $|m|$ increases, i.e. as we move away form this plane,  and eventually vanishes.

For $\lambda$ sufficiently large  (in particular, for $\lambda>\lambda_c(k)$ given explicitly in Section \ref{sec:Explicit}),  
the complexity $S_0(m)$ reveals an interesting structure.  
It is positive in an interval $m\in (m_1(\lambda,k), m_2(\lambda,k))$, where $m_{1}(\lambda,k),m_2(\lambda,k) = \Theta(\lambda^{-1/(k-2)})$ and  becomes 
non-positive outside this interval. 
however it increases again and touches zero for $m= m_*(\lambda,k)$ close to one
(for $k$ even it  also becomes zero for $m =  -m_*(\lambda,k)$ by symmetry). 
In other words, all the local maxima are either very close to the unknown vector $\bu$ (and to the global maximum) 
or they are on a narrow spherical annulus orthogonal to $\bu$. 

It is interesting to discuss the behavior of local ascent optimization algorithms in such a landscape. 
While at this point the discussion is necessarily heuristic, it points at some interesting directions for future work.
The expected exponential number of local maxima in the annulus $|\<\bu,\bsigma\>|\le \Theta(\lambda^{-1/(k-2)})$
suggests that algorithms can  converge to a local maximum that is well correlated with $\bu$, 
only if they are initialized outside that annulus. In other words, the initialization $\bsigma_0$ must be
such that $\<\bu,\bsigma_0\> \ge C\lambda^{-1/(k-2)}$.  If no side information is available on $\bu$, a random initialization will be used.
This achieves $\<\bu,\bsigma_0\> =\Theta(n^{-1/2})$  with positive probability, 
and hence will escape local maxima provided $\lambda\ge Cn^{(k-2)/2}$. Remarkably, this is the same scaling as the
threshold for power iteration obtained in  \cite{richard2014statistical}. It would be interesting to make rigorous this connection.

Let us emphasize that our results only concerns the \emph{expected number} of critical points. As is
customary with random variables that fluctuate on the exponential scale, this is not necessarily close to the typical
number of critical points. While we expect that several qualitative features found in this work will hold when considering the typical
number, a rigorous justification is still open (see Section \ref{sec:Related} for further discussion of this point).

The rest of the paper is organized as follows.
We  state formally our main results in Section \ref{sec:Main}, which also sketches the main ideas of the proofs. 
We will then review earlier literature in Section \ref{sec:Related}, and present proofs in Section \ref{sec:Proof}.

\section{Main results}
\label{sec:Main}

Our main results concern the number of critical points and the number of local maxima of the function $f(\bsigma)$ introduced 
in Eq.~(\ref{eqn:objective}), where $\bY\in(\reals^{n})^{\otimes k}$ is distributed as per Eq.~(\ref{eq:FirstSpiked}).

Throughout, we denote by $\nabla f(\bsigma)$ and $\nabla^2 f(\bsigma)$ be the Euclidean gradients and Hessians of $f$ at 
$\bsigma$, and $\tgrad f(\bsigma)$ and $\thess f(\bsigma)$ be the Riemannian gradients and Hessians at $\bsigma$. 
The completed real line is denoted by $\oR = \R\cup\{+\infty,-\infty\}$. For a set $S\subseteq \R$, we denote by $\bar{S}$ its closure,
and $S^o$ its interior.

\subsection{Complexity of critical points}

For any Borel sets $E \subset \R$ and $M \in [-1,1]$, we define $\Crt_{n, \star} (M,E)$ to be the number of critical points of $f$ with function value in $E$ and correlation in $M$:
\begin{equation}
\Crt_{n, \star} (M,E) \eqndef \sum_{\bsigma: \tgrad f(\bsigma) = \bzero} \ones\{\< \bsigma, \bu \> \in M\} \ones\{ f(\bsigma) \in E \}.
\end{equation}

We define function $S_{\star}: [-1,1] \times \R \rightarrow \oR$ as
\begin{align}
S_{\star}(m,x) \eqndef \frac{1}{2}(\log(k-1) + 1) + \frac{1}{2}\log (1-m^2)- k \lambda^2 m^{2k-2} (1-m^2) - (x - \lambda m^k)^2  + \Phi_{\star}\Big(\sqrt{\frac{2k}{k-1}} x\Big) , \label{eq:StarDef}
\end{align}
where
\begin{align}
\Phi_\star(x) = \left\{\begin{array}{lr}
        x^2/4 - 1/2, & \vert x \vert \leq 2,\\
        x^2/4 - 1/2 - \vert x \vert/4 \cdot \sqrt{x^2 - 4} + \log\{\sqrt{x^2/4  - 1} + \vert x \vert/2 \}, & \vert x \vert > 2.
\end{array}\right. \label{eq:PhiDef}
\end{align}

\begin{theorem}\label{thm:critical_point}
For any Borel sets $M \subset [-1, 1]$ and $E \subset \R$, assume $\lambda$ is fixed.
Then, we have 
\begin{align}
\limsup_{n\rightarrow \infty} \Big\{ \frac{1}{n} \log \E \big[ \Crt_{n, \star}(M,E) \big] - \sup_{m\in \Mb, e \in \Eb }S_{\star}(m,e)\Big\} & \le 0\, ,\label{eq:ThmCrUB}\\
\liminf_{n\rightarrow \infty} \Big\{\frac{1}{n} \log \E \{\Crt_{n, \star}(M,E)\} - \sup_{m\in M^o, e \in E^o }S_{\star}(m,e)\Big\} & \ge 0 \, . \label{eq:ThmCrLB}
\end{align}
\end{theorem}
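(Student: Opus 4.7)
The main tool is the Kac--Rice formula, which represents
\begin{align*}
\E\bigl[\Crt_{n,\star}(M,E)\bigr] = \int_{\S^{n-1}} \E\bigl[\,|\det\thess f(\bsigma)|\,\ones\{f(\bsigma)\in E\}\,\bigm|\,\tgrad f(\bsigma)=\bzero\bigr]\, p_{\tgrad f(\bsigma)}(\bzero)\, \ones\{\<\bsigma,\bu\>\in M\}\, d\bsigma.
\end{align*}
Because the distribution of the noise tensor $\bW$ is invariant under rotations of $\reals^n$ and the signal $\lambda\bu^{\otimes k}$ breaks this symmetry only along $\bu$, the conditional expectation and the gradient density depend on $\bsigma$ only through $m=\<\bsigma,\bu\>$. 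Slicing the sphere by latitude with respect to $\bu$ reduces the surface integral to a one-dimensional integral over $m$ with volume factor $\Vol(\S^{n-2})(1-m^2)^{(n-3)/2}$, and this factor is responsible for the $\tfrac12\log(1-m^2)$ term in $S_\star$.

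Next I would compute the joint Gaussian law of $(f(\bsigma),\tgrad f(\bsigma),\thess f(\bsigma))$ at a representative point $\bsigma$ with $\<\bsigma,\bu\>=m$. Since $f$ is linear in the Gaussian tensor $\bW$, all three quantities are jointly Gaussian with explicit means and covariances, and three ingredients emerge. First, $f(\bsigma)\sim\normal(\lambda m^k,1/(2n))$, so its density at $x$ contributes the term $-(x-\lambda m^k)^2$ to $S_\star$. Second, $\tgrad f(\bsigma)$ is an $(n-1)$-dimensional Gaussian whose mean points along the tangential projection of $\bu$ with length proportional to $\lambda m^{k-1}\sqrt{1-m^2}$ and whose covariance is isotropic of order $1/n$, so the Mahalanobis exponent in $p_{\tgrad f(\bsigma)}(\bzero)$ contributes $-k\lambda^2 m^{2k-2}(1-m^2)$, while the various Gaussian normalizations (of $\tgrad f$, $f$, and the determinant to follow) combine into the additive constant $\tfrac12(\log(k-1)+1)$. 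Third, conditionally on $\tgrad f(\bsigma)=\bzero$ and $f(\bsigma)=x$, the Riemannian Hessian $\thess f(\bsigma)$ equals, after a scalar rescaling, a $\GOE(n-1)$ matrix on $T_\bsigma\S^{n-1}$ shifted by $-\sqrt{2k/(k-1)}\,x\,\id$; the shift arises from the Riemannian correction $-kf(\bsigma)\Projp$ to the Euclidean Hessian, and the factor $\sqrt{2k/(k-1)}$ absorbs the variance normalization of the tensor.

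The remaining analytic ingredient is the classical asymptotic
\begin{align*}
\frac{1}{n-1}\log\E\bigl|\!\det(\bA - y\,\id_{n-1})\bigr| \;\longrightarrow\; \Phi_\star(y), \qquad n\to\infty,
\end{align*}
uniform on compacts, for $\bA\sim\GOE(n-1)$ in the normalization above. This is the logarithmic potential $\int\log|y-\lambda|\,\mu_{\rm sc}(d\lambda)$ of the semicircle law, which equals $y^2/4-1/2$ for $|y|\le 2$ and acquires a logarithmic correction for $|y|>2$ from an ``escaping'' eigenvalue near $y$ --- precisely the two branches of $\Phi_\star$ in \eqref{eq:PhiDef}. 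Applied to $y=\sqrt{2k/(k-1)}\,x$ this yields the last term of $S_\star$.

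Assembling these pieces, the Kac--Rice integrand has leading exponential rate $\exp\{nS_\star(m,x)+o(n)\}\,dm\,dx$, and both \eqref{eq:ThmCrUB} and \eqref{eq:ThmCrLB} follow by Laplace analysis: uniform convergence on compact subsets of $\Mb\times\Eb$, together with coarse bounds $|\det\thess f|\le \|\thess f\|^{n-1}$ that dispatch the regions $|m|\to 1$ and $|x|\to\infty$, yield the upper bound, while restricting the integral to a small neighbourhood of an interior maximizer $(m_\star,e_\star)\in M^o\times E^o$ on which $S_\star$ is continuous yields the matching lower bound. The main obstacle I anticipate is twofold: tracking carefully the $O(1/n)$ corrections to the joint covariances coming from the permutation-symmetrization of $\bW$ (which pin down the additive constants in $S_\star$), and making the $\GOE$ expected-determinant asymptotic uniform in the shift $y$ near the spectral edge $|y|=2$, where $\Phi_\star$ is only $C^1$.
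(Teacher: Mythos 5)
Your outline reproduces the paper's Kac--Rice reduction, the latitude decomposition, and three of the four ingredients of $S_\star$ correctly, but it misidentifies the conditional law of the Hessian, and this omission hides the main technical content of the proof. The Euclidean Hessian of the population term $\lambda\<\bu,\bsigma\>^k$ contributes $k(k-1)\lambda\<\bu,\bsigma\>^{k-2}\,(\Projp_{\bsigma}\bu)(\Projp_{\bsigma}\bu)^{\sT}$, so in the tangent frame the conditional Riemannian Hessian is \emph{not} a shifted $\GOE(n-1)$ matrix: it is, after the rescaling you describe, $\theta_n(m)\,\be_1\be_1^{\sT}+\bW_{n-1}-t_n(x)\,\id_{n-1}$ with $\theta_n(m)\approx\sqrt{2k(k-1)}\,\lambda m^{k-2}(1-m^2)$, i.e.\ a rank-one deformation of the GOE whose spike strength is of the same order as the spectral edge. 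It is true that the final formula for $S_\star$ contains no trace of $\theta$ --- the $\Phi_\star$ term depends only on $t(x)$ --- but that is a conclusion, not something you may assume. The classical asymptotic $\frac1n\log\E|\det(\bA-y\id)|\to\Phi_\star(y)$ for $\bA\sim\GOE$ does not apply as stated, and a rank-one perturbation does not change $|\det|$ by a subexponential factor in any obvious way (e.g.\ $\det(\bA+\theta\be_1\be_1^{\sT})=\det(\bA)\,(1+\theta(\bA^{-1})_{11})$ blows up when $\bA$ is nearly singular, which is exactly the regime that dominates the expectation of the absolute determinant). Establishing that $\frac1n\log\E|\det(\theta\be_1\be_1^{\sT}+\bW_n-t\id_n)|\to\Phi_\star(t)$, uniformly in $\theta$ on compacts, is precisely Proposition 4(b),(c) of the paper, and it is carried out through the joint eigenvalue density of the deformed ensemble --- which involves the spherical integral $I_n(\theta,\cdot)$ --- together with the Guionnet--Ma\"ida continuity and asymptotics of spherical integrals and a case analysis according to whether $\theta>1$ (where a BBP outlier detaches to $\theta+1/\theta$) and whether $t$ lies inside or outside $[-2,2]$.

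A second, more minor gap: even in the unspiked case, passing from weak convergence of the empirical spectral measure to convergence of $\frac1n\log\E\exp\{n\int\log|t-\lambda|\,dL_n(\lambda)\}$ is delicate when $t$ lies in the bulk, because $\log|t-\cdot|$ is unbounded below; the upper bound needs a large-deviation cutoff on $\{L_n\notin\ball(\sigma_{\rm sc},\delta)\}$ and upper semicontinuity of $\mu\mapsto\int\log|t-\lambda|\,d\mu$, and the lower bound needs the trick of absorbing the factor $\prod_i|t-x_i|$ as an extra eigenvalue at $t$ and renormalizing via Selberg's integral. Your phrase ``uniform convergence on compacts'' papers over both of these. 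The Laplace-method assembly and the treatment of $|x|\to\infty$ and $|m|\to1$ in your last paragraph are fine and match the paper's exponential-tightness step.
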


\subsection{Complexity of local maxima}

For any Borel set $E \subset \R$ and $M \in [-1,1]$, we define $\Crt_{n, \knot} (M,E)$ to be the number of local maxima of $f$ with function value in $E$ and correlation in $M$:
\begin{equation}
\Crt_{n, \knot} (M,E) \eqndef \sum_{\bsigma: \tgrad f(\bsigma) = \bzero} \ones\{\< \bsigma, \bu \> \in M\} \ones\{ f(\bsigma) \in E \} \ones\{ \thess f(\bsigma) \preceq 0 \}. 
\end{equation}

We define function $S_{\knot}: [-1,1] \times \R \rightarrow \bar \R$ as
\begin{align}
S_{\knot}(m,x) \eqndef S_{\star}(m,x) - L(\theta(m), t(x)), 
\end{align}
where
\begin{align}\label{eqn:LDP_spiked}
L(\theta, t) =\left\{ \begin{array}{lr} 
\frac{1}{4} \int_{\theta + \frac{1}{\theta}}^t \sqrt{y^2 - 4} \cdot \d y - \frac{1}{2} \theta \Big[t - \Big(\theta + \frac{1}{\theta}\Big)\Big] + \frac{1}{8}\Big[t^2 - \Big(\theta + \frac{1}{\theta}\Big)^2\Big], &  2 \leq t < \theta + \frac{1}{\theta}, 1 < \theta, \\
\infty, & t < 2,\\
0, & \text{otherwise}.
\end{array} \right.
\end{align}
and $\theta = \theta(m) = \sqrt{2k(k-1)} \cdot \lambda m^{k-2} (1-m^2)$, $t = t(x) = \sqrt{2k/(k-1)} \cdot x$. We also note that
\begin{align}
\int_{2}^t \sqrt{y^2 - 4} \cdot \d y = t\sqrt{\frac{t^2}{4}-1}-2\log\Big(\frac{t}{2}+\sqrt{\frac{t^2}{4}-1}\Big)\, .
\end{align}

\begin{theorem}\label{thm:local_maxima}
For any Borel sets $M \subset [-1, 1]$ and $E \subset \R$, assume $\lambda$ is fixed. Then, we have
\begin{align}
\limsup_{n\rightarrow \infty} \Big\{\frac{1}{n} \log \E \big[\Crt_{n, \knot}(M,E) \big] - \sup_{m\in \Mb, e \in \Eb }S_{\knot}(m,e)\Big\}& \le 0\,, \label{eq:ThmLmUB}\\
\liminf_{n\rightarrow \infty} \Big\{\frac{1}{n} \log \E \{\Crt_{n, \knot}(M,E)\} - \sup_{m\in M^o, e \in E^o }S_{\knot}(m,e)\Big\}& \ge 0\, . \label{eq:ThmLmLB}
\end{align}
\end{theorem}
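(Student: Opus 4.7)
The plan is to apply the Kac--Rice formula to $\Crt_{n,\knot}(M,E)$, yielding
\begin{align*}
\E[\Crt_{n,\knot}(M,E)] = \int_{\S^{n-1}} p_{\tgrad f(\bsigma)}(0)\,\E\Bigl[\,|\det \thess f(\bsigma)|\,\mathbf{1}\{\thess f(\bsigma)\preceq 0\}\,\mathbf{1}\{f(\bsigma)\in E\}\,\Bigm|\,\tgrad f(\bsigma)=0\Bigr]\,\mathbf{1}\{\langle\bsigma,\bu\rangle\in M\}\,d\bsigma.
\end{align*}
This is exactly the representation underlying the proof of Theorem \ref{thm:critical_point}, augmented by the single indicator $\mathbf{1}\{\thess f(\bsigma)\preceq 0\}$, so the only new task is to quantify the exponential cost of that event; once it is identified as $-L(\theta(m),t(x))$, the outer Laplace-type bookkeeping can be recycled verbatim from Theorem \ref{thm:critical_point}.

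By rotational invariance of $\bW$ about $\bu$, the integrand depends on $\bsigma$ only through $m=\langle\bsigma,\bu\rangle$, and the surface integral reduces to a one-dimensional integral against the density proportional to $(1-m^2)^{(n-3)/2}$. Fix $\bsigma$ with $\langle\bsigma,\bu\rangle=m$ and choose an orthonormal basis of the tangent space whose first element is $\be_1=(\bu-m\bsigma)/\sqrt{1-m^2}$. A direct computation of the joint Gaussian law of $(f(\bsigma),\tgrad f(\bsigma),\thess f(\bsigma))$, followed by conditioning on $\tgrad f(\bsigma)=0$ and $f(\bsigma)=x$, writes $\thess f(\bsigma)$ as a scaled GOE matrix of size $n-1$ (from the noise), plus a deterministic rank-one bump of strength $k(k-1)\lambda m^{k-2}(1-m^2)$ along $\be_1\be_1^\top$ (from the signal), plus a scalar shift determined by $x$. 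After standardizing so that the semicircle is supported on $[-2,2]$, this is precisely a spiked GOE with BBP parameter $\theta(m)=\sqrt{2k(k-1)}\,\lambda m^{k-2}(1-m^2)$, and the event $\thess f(\bsigma)\preceq 0$ becomes $\lambda_{\max}\le t(x)=\sqrt{2k/(k-1)}\,x$.

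The conditional expectation now factors, on the exponential scale, as
\begin{align*}
\E\bigl[\,|\det \bH|\,\mathbf{1}\{\lambda_{\max}(\bH)\le t\}\bigr] \;=\; \E\bigl[\,|\det \bH|\,\bigr]\cdot \Pr\bigl[\lambda_{\max}(\bH)\le t\bigr]\cdot e^{o(n)},
\end{align*}
where $\bH$ is the spiked GOE of size $n-1$. The first factor is governed by the bulk only and reproduces the $\Phi_\star$ contribution already captured in $S_\star$. The second factor is a classical large-deviation probability for the top eigenvalue of a spiked GOE: when $\theta\le 1$ and $t\ge 2$, or when $\theta>1$ and $t\ge\theta+1/\theta$, the typical top eigenvalue already lies below $t$ and the rate vanishes; when $\theta>1$ and $2\le t<\theta+1/\theta$ one must pull the BBP outlier back into the bulk, and a Coulomb-gas saddle-point on the tilted joint eigenvalue density (in the spirit of Maida's analysis of the rank-one spiked model) produces exactly the rate $L(\theta,t)$ in (\ref{eqn:LDP_spiked}); when $t<2$ one is pushing the bulk edge itself, a cost of order $n^2$, so on the $n$-scale $L=+\infty$.

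Assembling these pieces with the Gaussian gradient density and the volume factor $(1-m^2)^{(n-3)/2}$, the integrand at $(m,x)$ has exponential rate $S_\star(m,x)-L(\theta(m),t(x))=S_\knot(m,x)$, and Laplace's method on the two-dimensional $(m,x)$-integral delivers both (\ref{eq:ThmLmUB}) and (\ref{eq:ThmLmLB}) exactly as in Theorem \ref{thm:critical_point}. The main technical obstacle I anticipate is the rigorous justification of the displayed factorization: one must control the joint law of the outlier eigenvalue and the log-determinant of the bulk, and in particular rule out anomalous correlations in a neighborhood of the threshold $t=\theta+1/\theta$ where the outlier is being absorbed into the semicircle. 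This is handled by writing the conditional expectation through the explicit spiked-GOE eigenvalue density and performing a saddle-point analysis of the constrained empirical measure, with uniform tightness estimates sufficient to interchange the limit with the $(m,x)$-integration and to produce matching upper and lower bounds.
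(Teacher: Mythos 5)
Your proposal follows essentially the same route as the paper: the Kac--Rice reduction to a spiked-GOE conditional expectation, the decoupling of the bulk log-determinant (which concentrates and yields the $\Phi_\star(t)$ contribution) from the top-eigenvalue constraint (whose exponential cost is Maida's rate $L(\theta,t)$), and a Laplace-type assembly over $(m,x)$. The technical obstacle you flag --- justifying the exponential-scale factorization near $t=\theta+1/\theta$ --- is resolved in the paper exactly as you suggest, by working with the explicit tilted joint eigenvalue density and using that deviations of the bulk empirical measure are negligible at speed $n$ (they occur at speed $n^2$), so that the upper and lower bounds match.
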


\subsection{Evaluating the complexity function} \label{sec:eval_complexity}

\begin{figure}[t]
\centering
\includegraphics[width=0.45\textwidth]{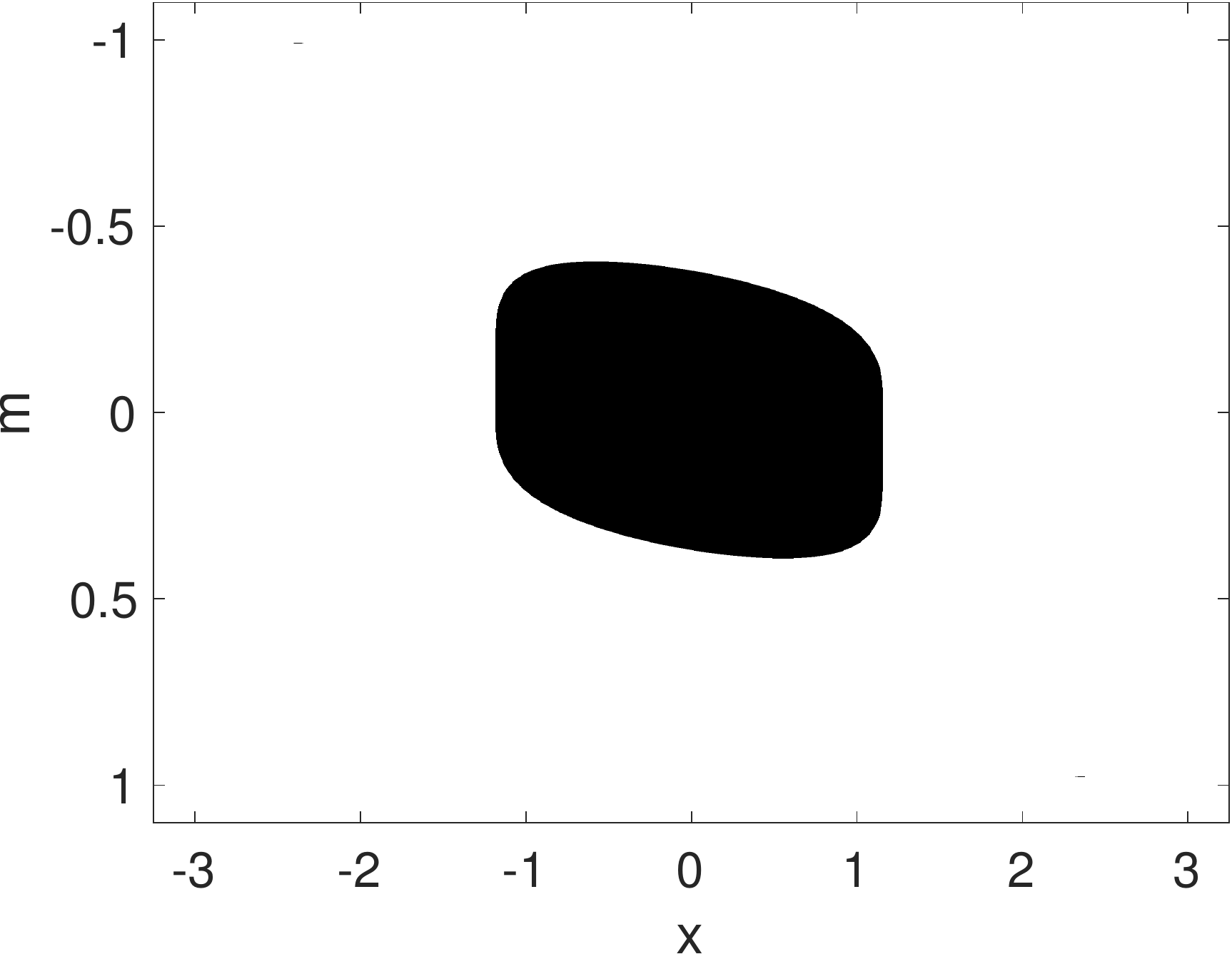}
\includegraphics[width=0.45\textwidth]{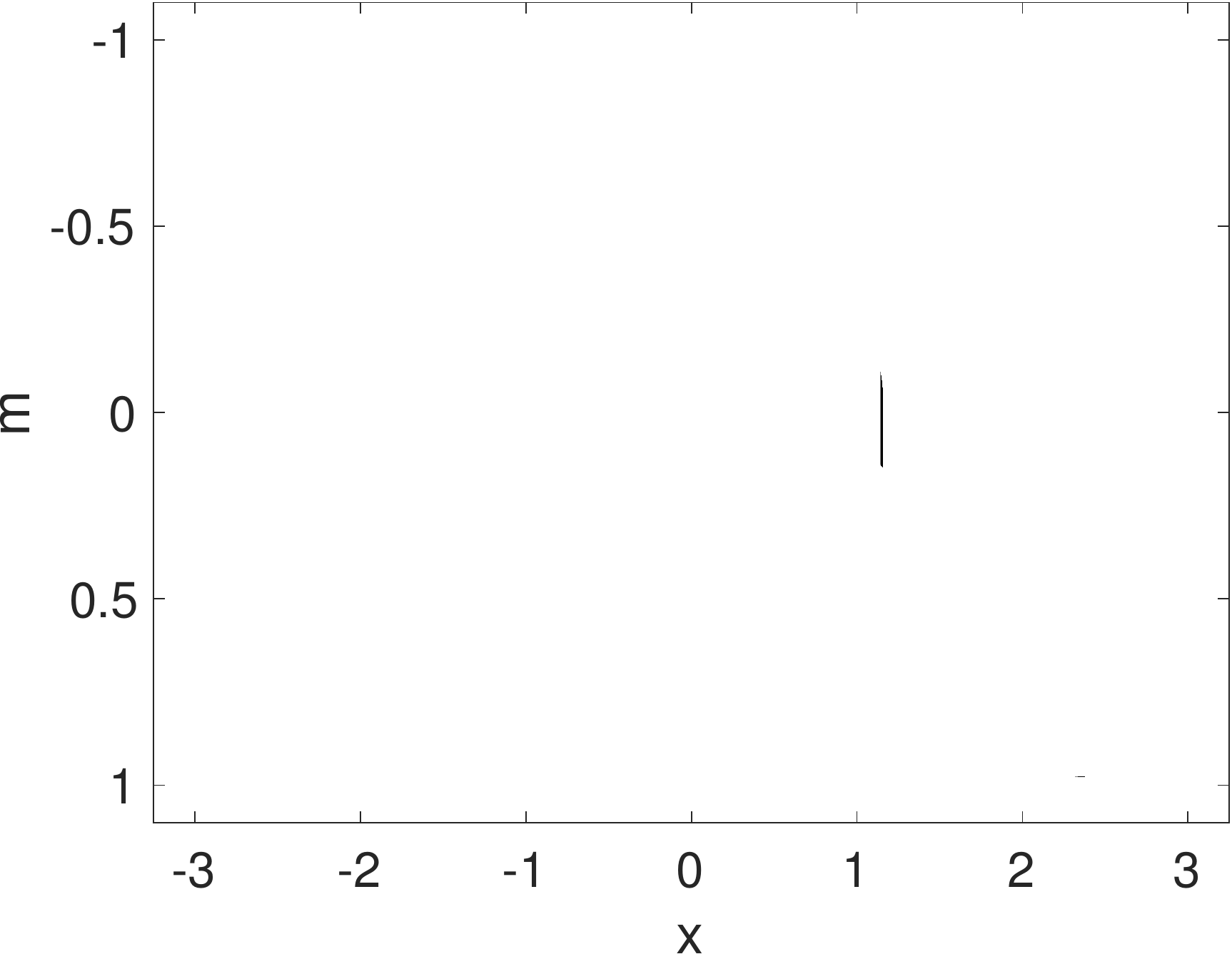}
\put(-245,38){$\downarrow$}
\put(-29,38){$\downarrow$}
\caption{Spiked tensor model with $k = 3$ and $\lambda = 2.25$. The black region corresponds to non-negative complexity:
$S_\star(m,x) \ge 0$ (left) and  $S_\knot(m,x) \ge 0$ (right). The arrows indicate the point where the complexity touches zero,
in correspondence with the `good' local maxima.}\label{fig:landscapeRegion}
\end{figure}

\begin{figure}[tp]
\phantom{A}
\vspace{-1cm}
\begin{tabular}{l}
\includegraphics[width=0.4\textwidth]{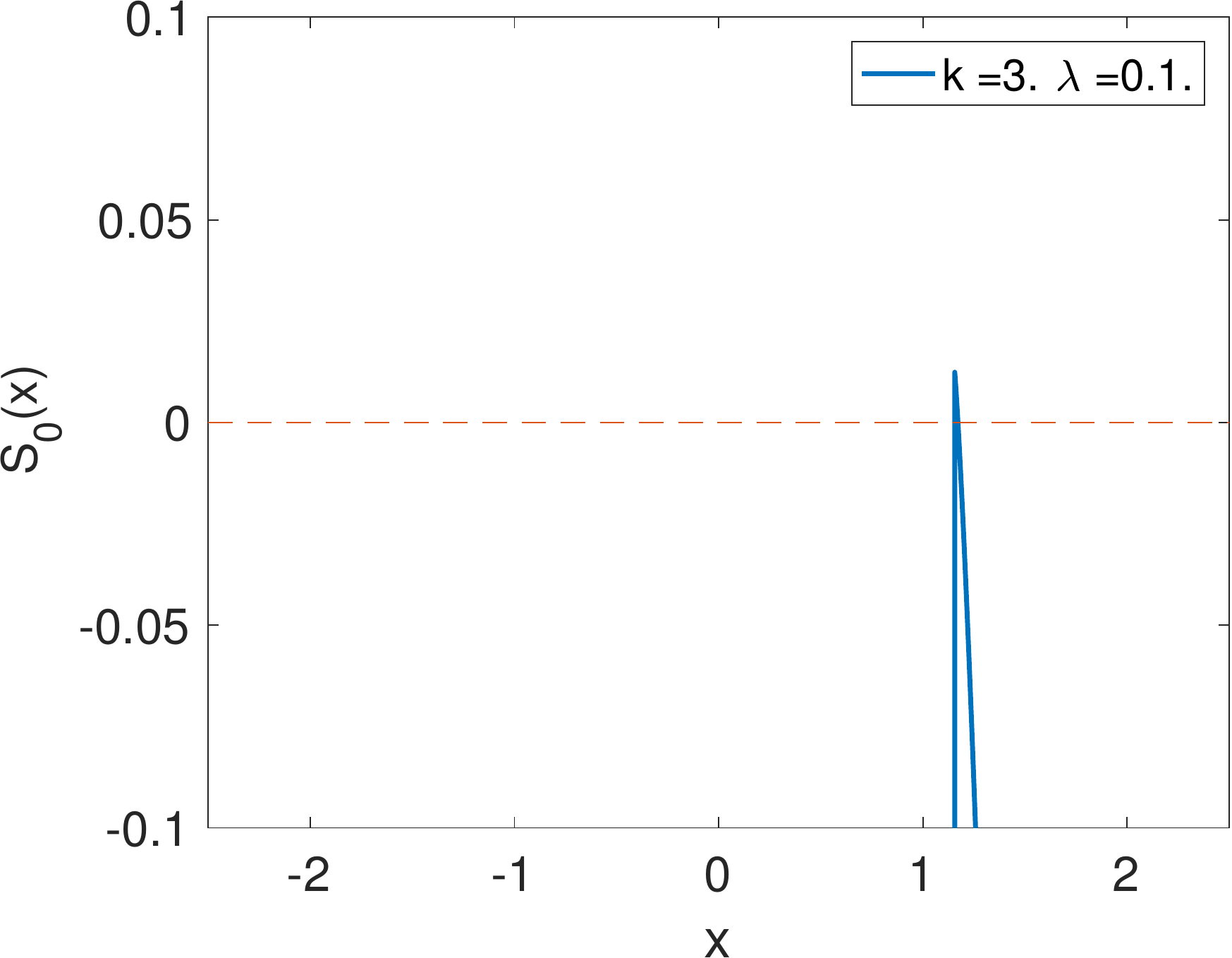}\phantom{AAAA}
\includegraphics[width=0.4\textwidth]{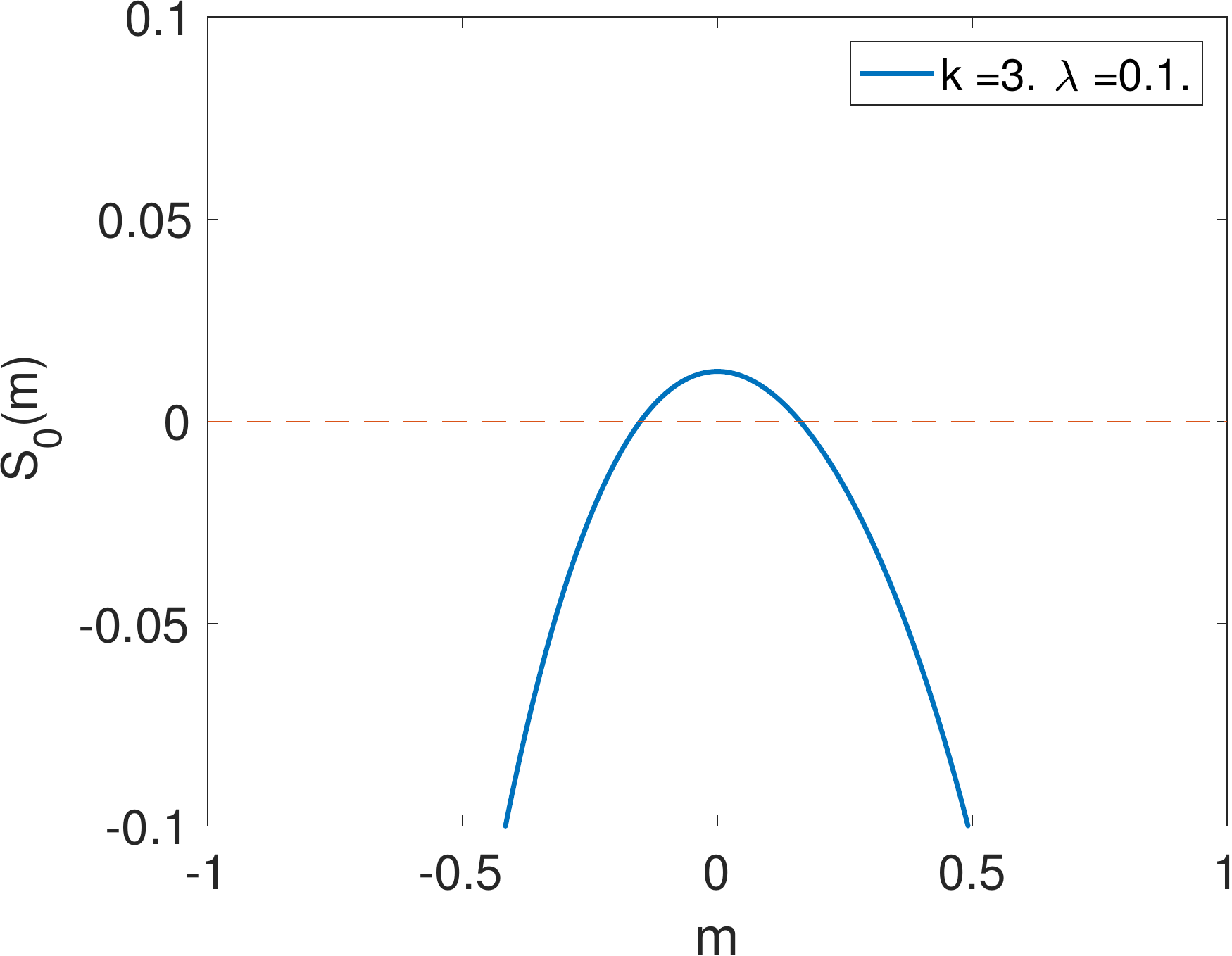}\\
\includegraphics[width=0.4\textwidth]{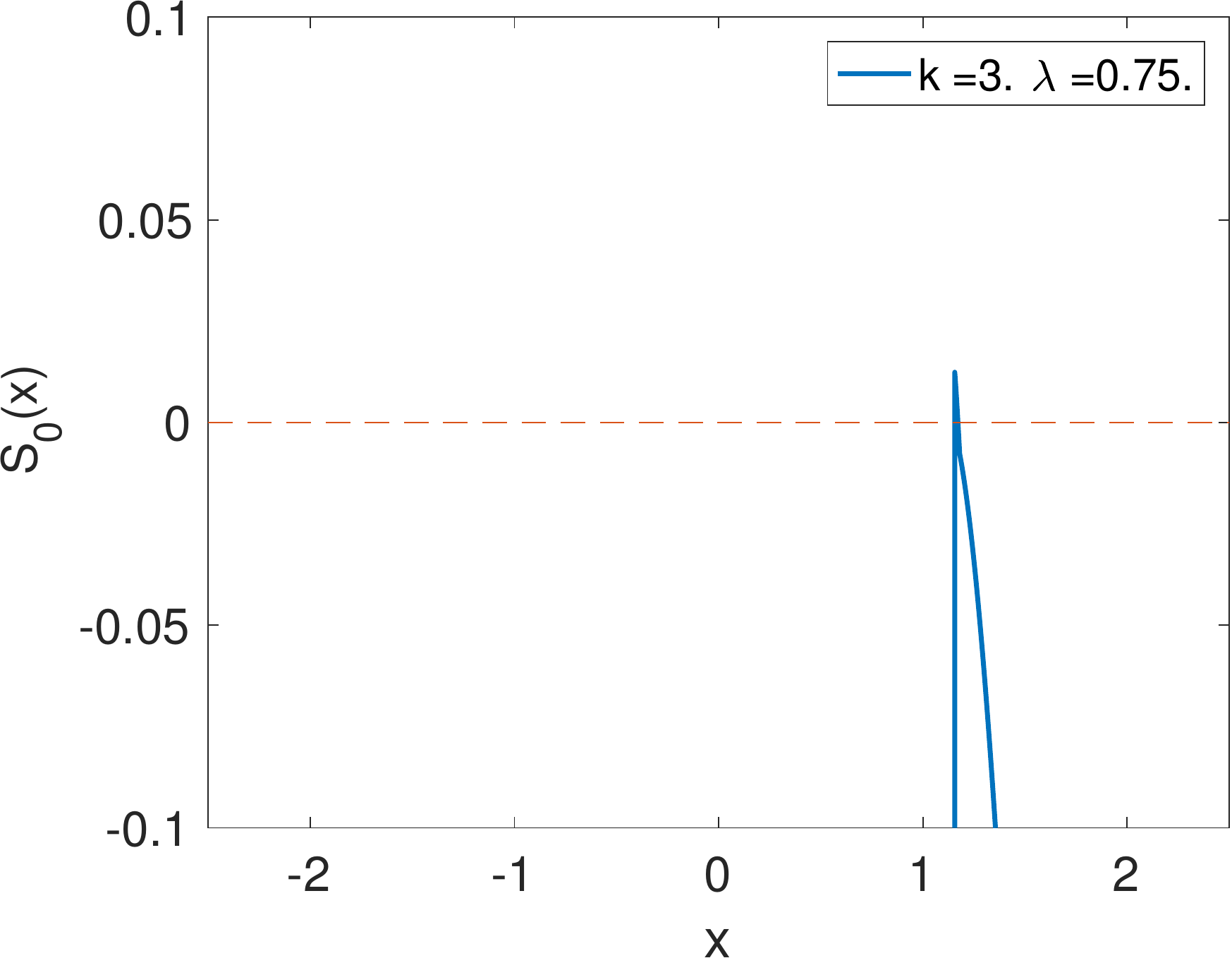}\phantom{AAAA}
\includegraphics[width=0.4\textwidth]{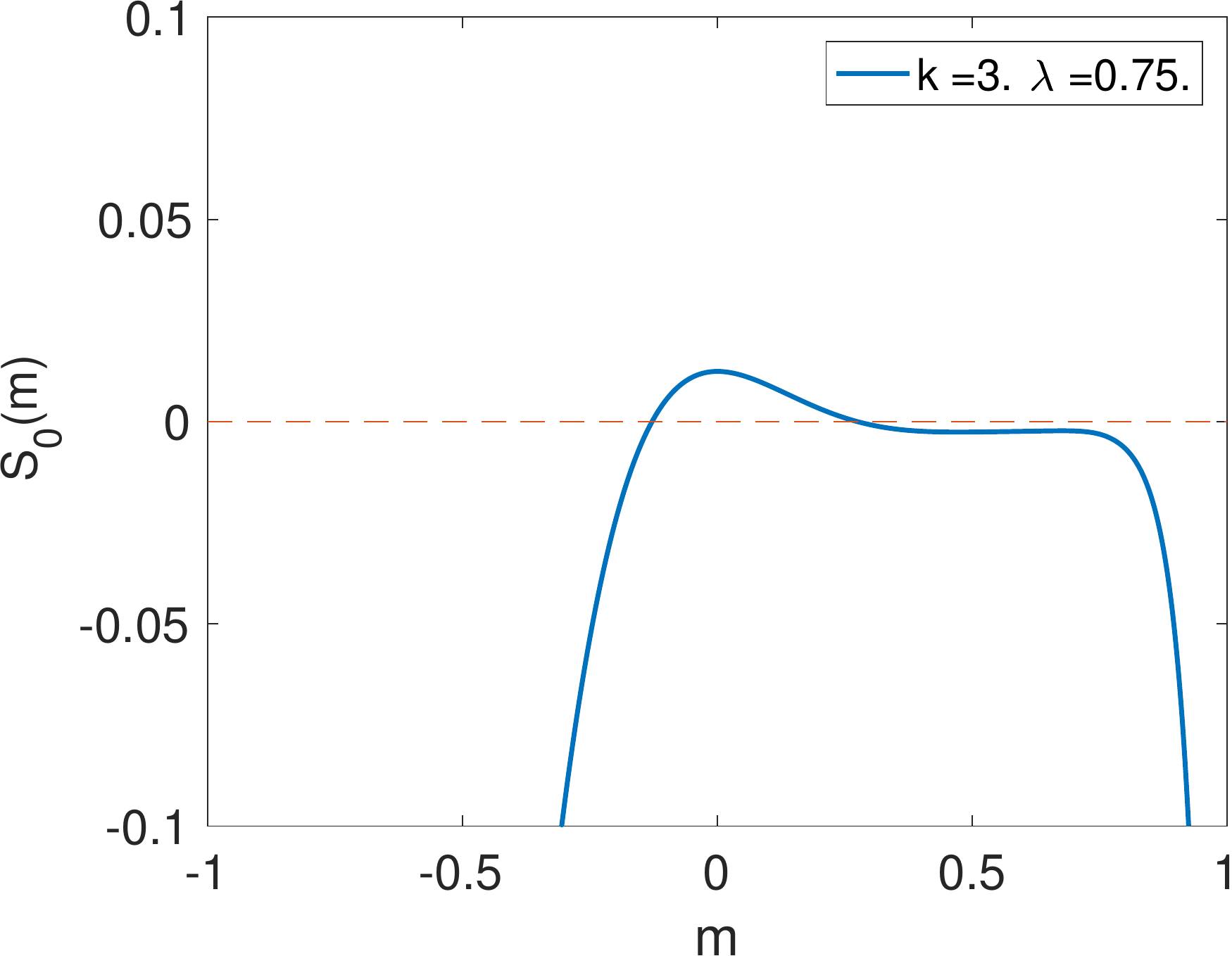}\\
\includegraphics[width=0.4\textwidth]{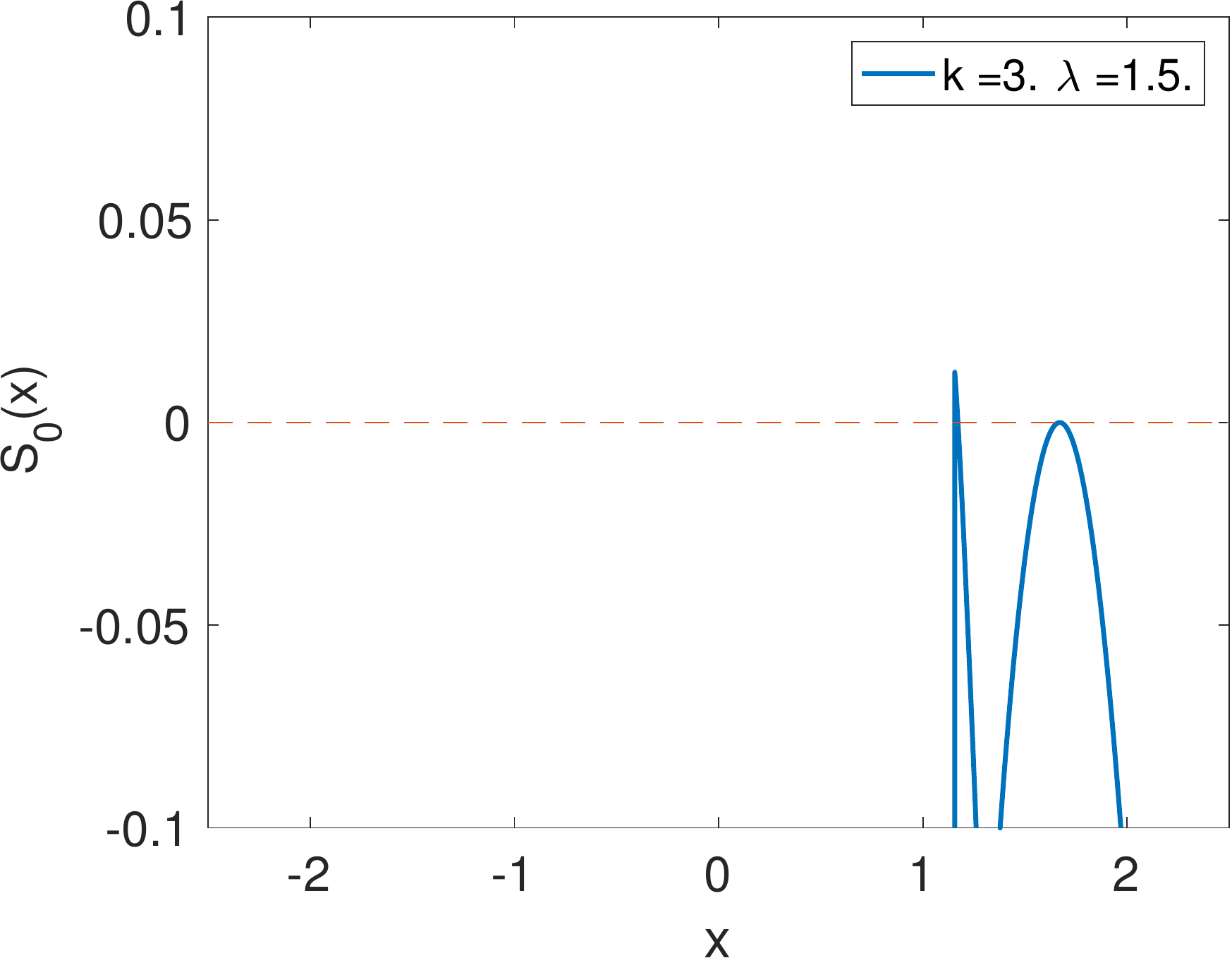}\phantom{AAAA}
\includegraphics[width=0.4\textwidth]{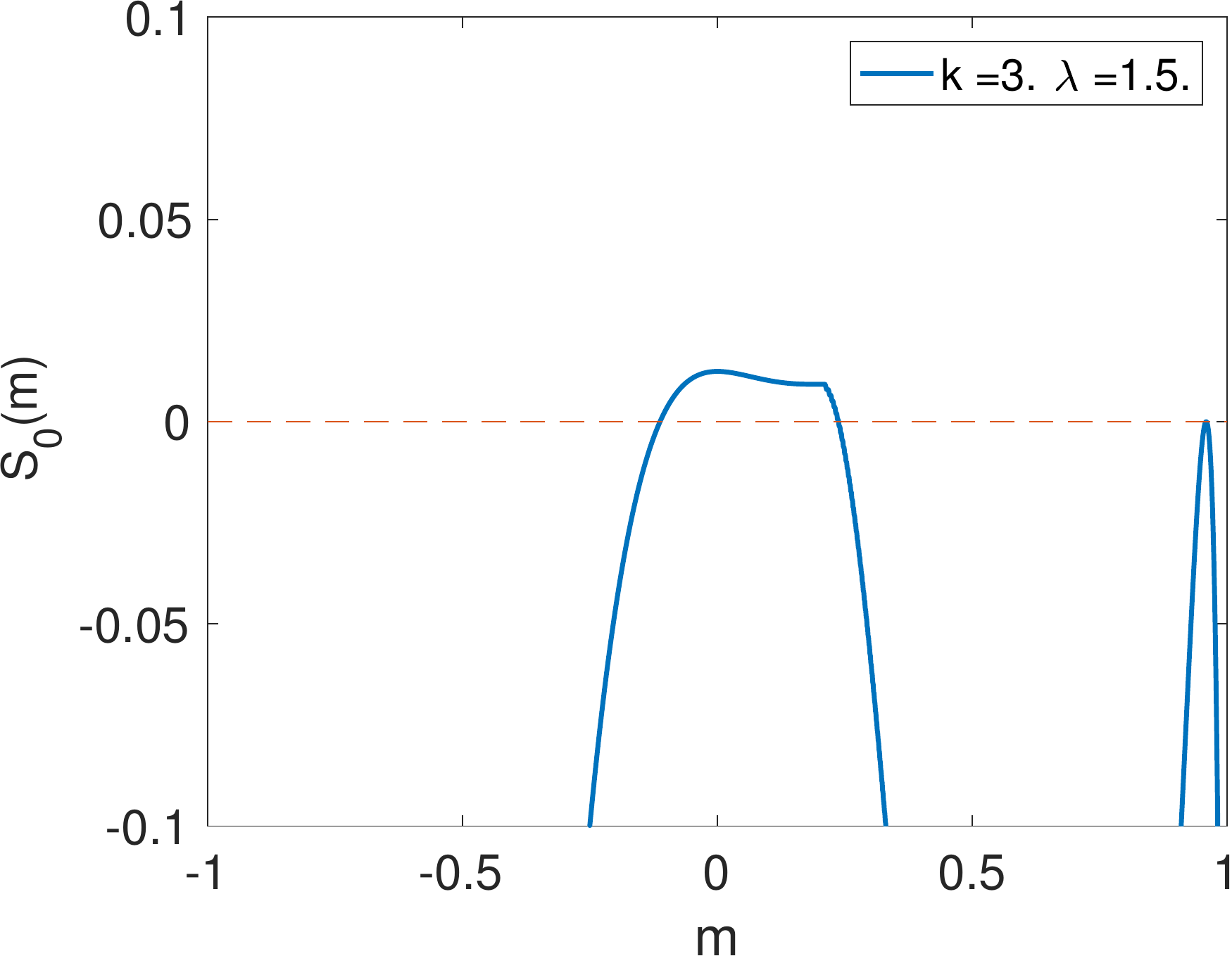}\\
\includegraphics[width=0.4\textwidth]{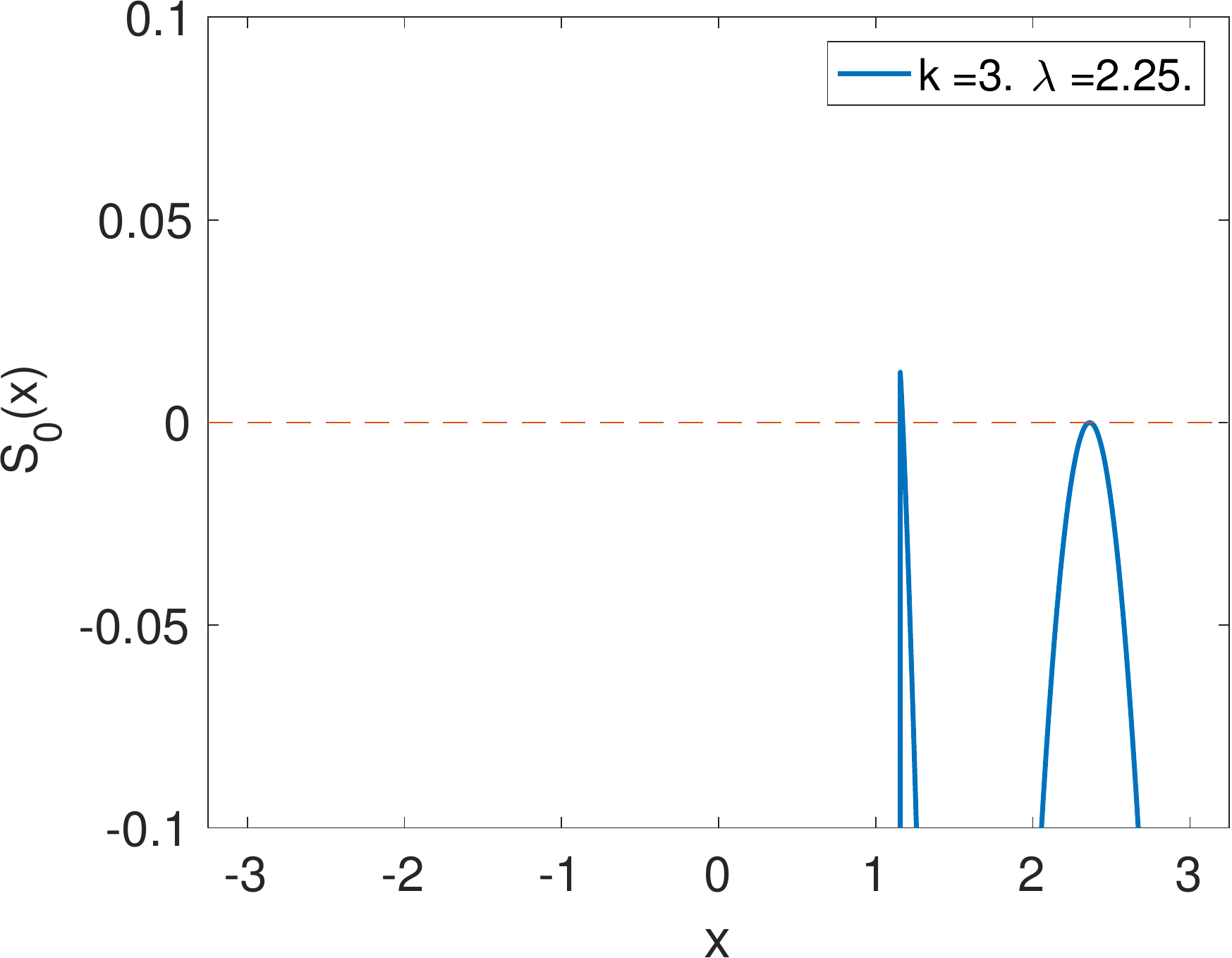}\phantom{AAAA}
\includegraphics[width=0.4\textwidth]{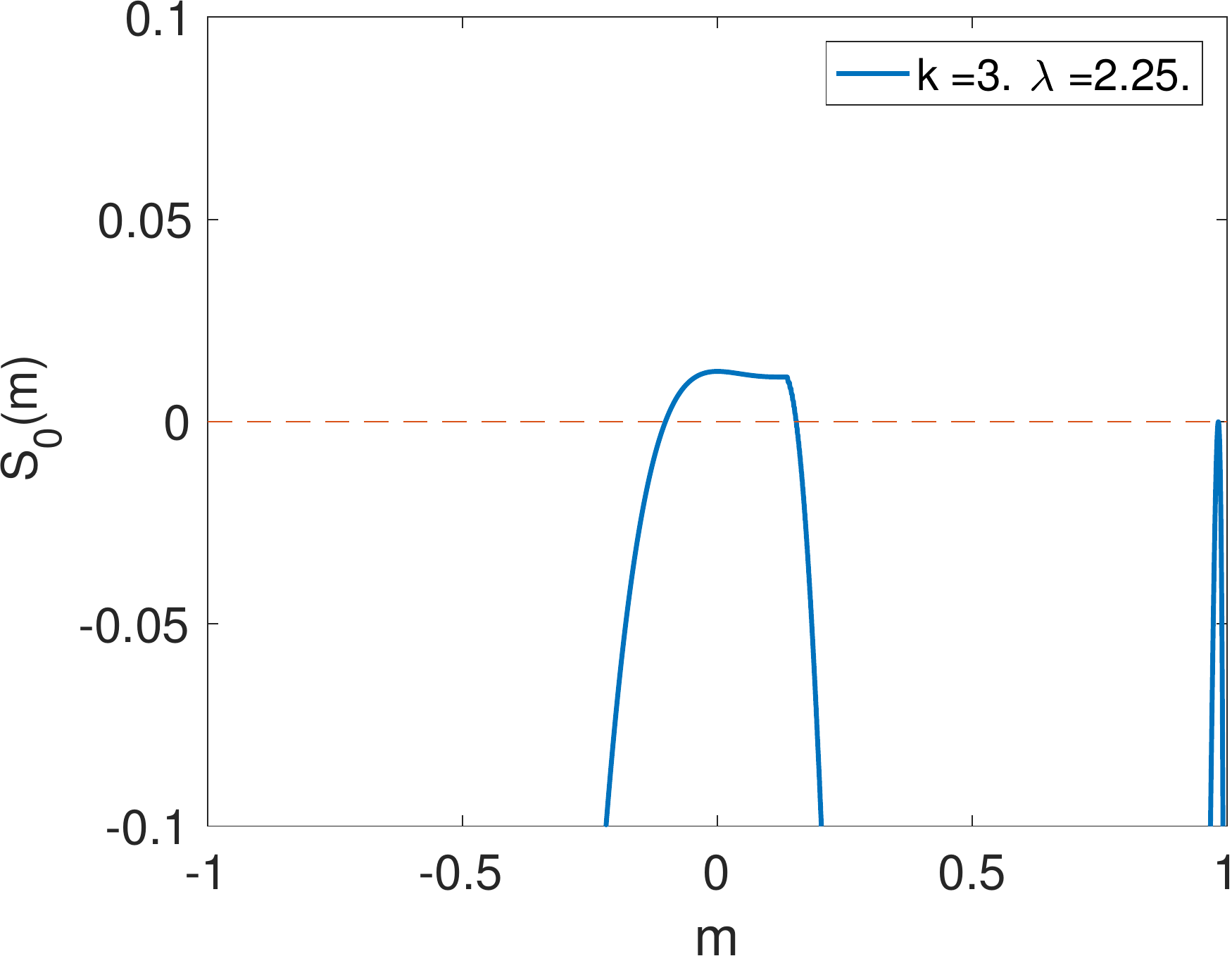}
\end{tabular}
\caption{Complexity (exponential growth rate of the expected number of local maxima) in the spiked tensor model with  $k=3$ and 
(from top to bottom) $\lambda\in\{0.1,0.75,1.5,2.25\}$. Left column: complexity as a function of the objective value 
$x = f(\bsigma)$, $S_\knot(x) = \max_m S_\knot(m,x)$. Right column: complexity as a function of the scalar product
$m=\<\bu,\bsigma\>$, $S_\knot(m)=\max_x S_{\knot}(m,x)$.}\label{fig:landscapeEvolution1} 
\end{figure}

\begin{figure}[tp]
\phantom{A}
\vspace{-1cm}
\begin{tabular}{l}
\includegraphics[width=0.4\textwidth]{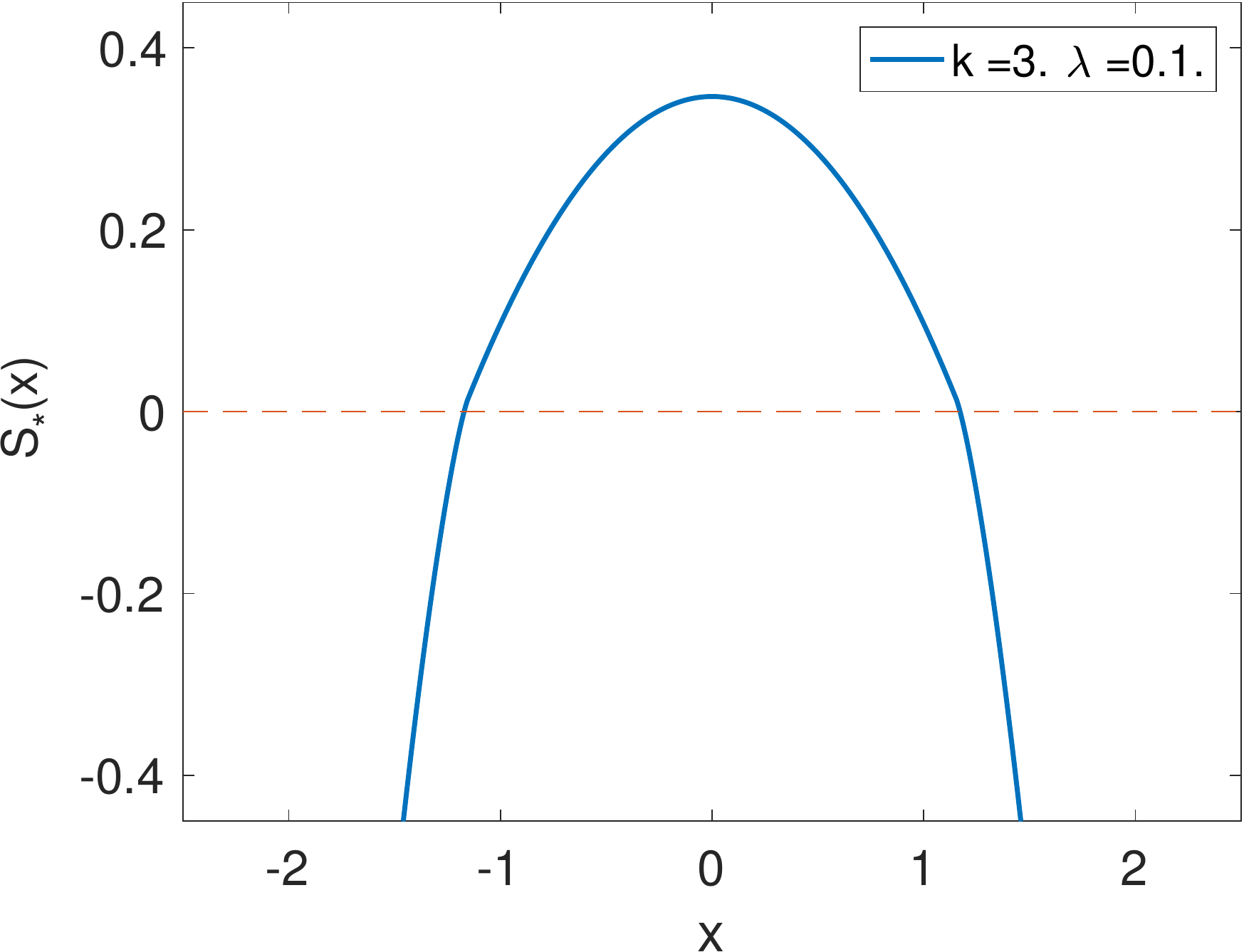}\phantom{AAAA}
\includegraphics[width=0.4\textwidth]{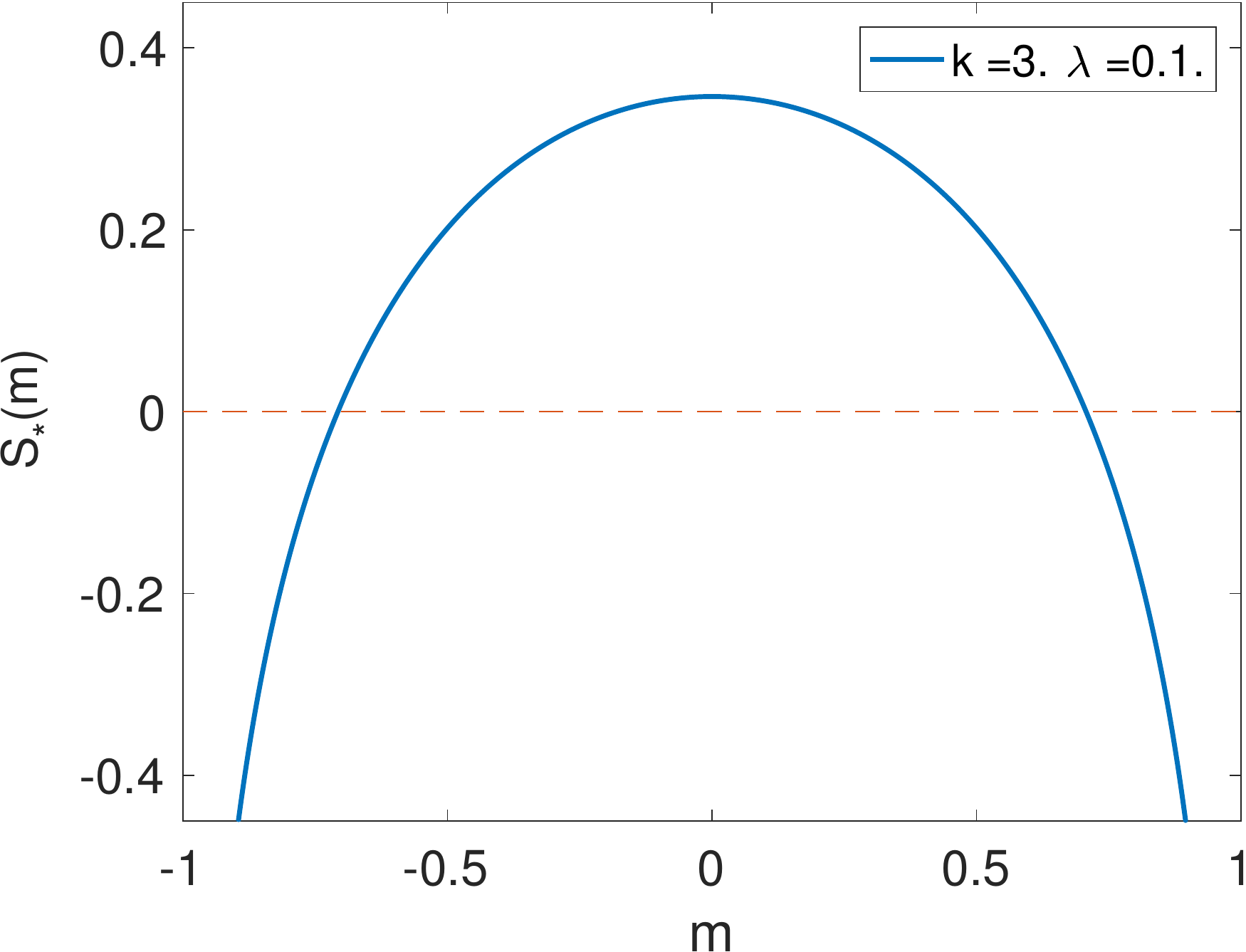}\\
\includegraphics[width=0.4\textwidth]{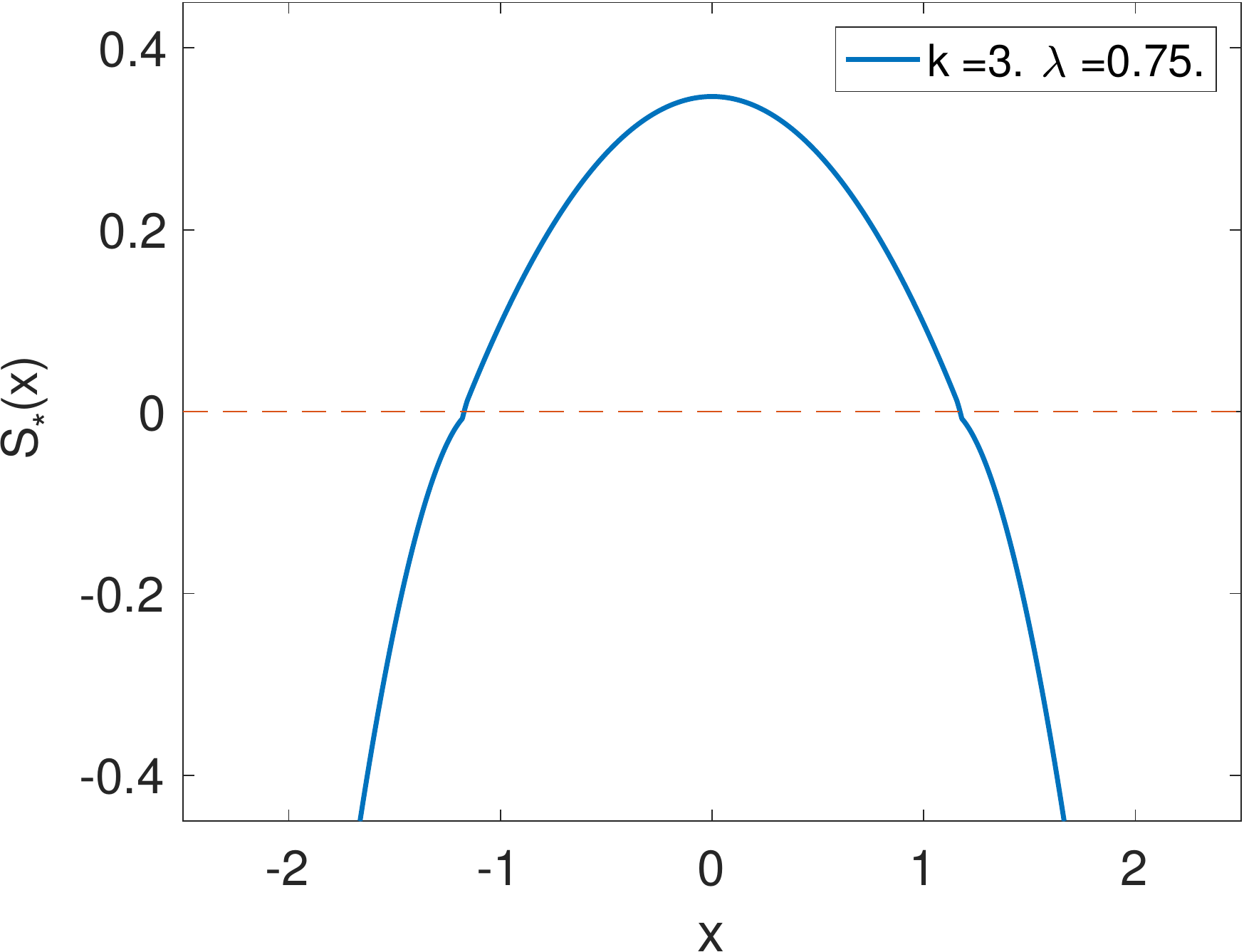}\phantom{AAAA}
\includegraphics[width=0.4\textwidth]{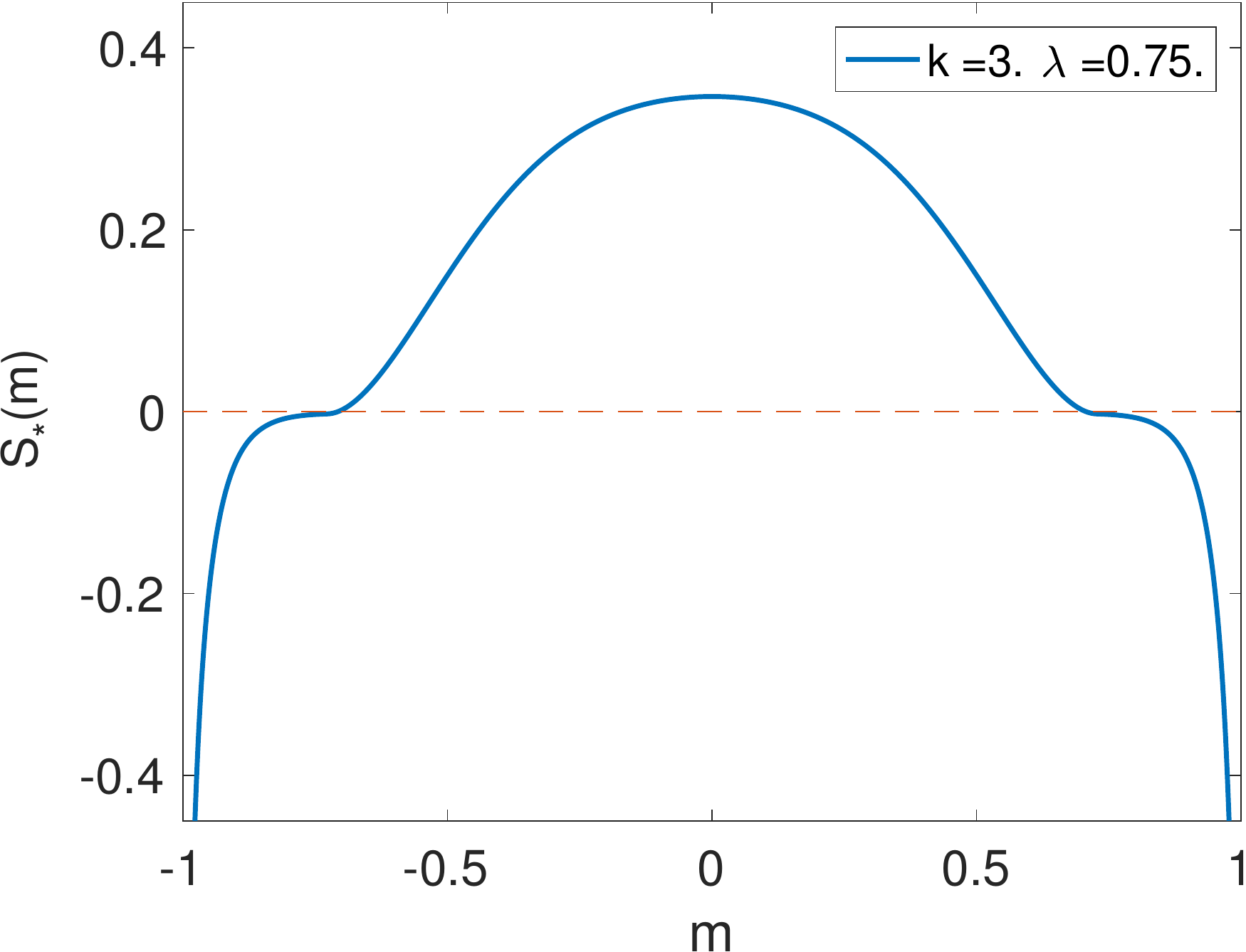}\\
\includegraphics[width=0.4\textwidth]{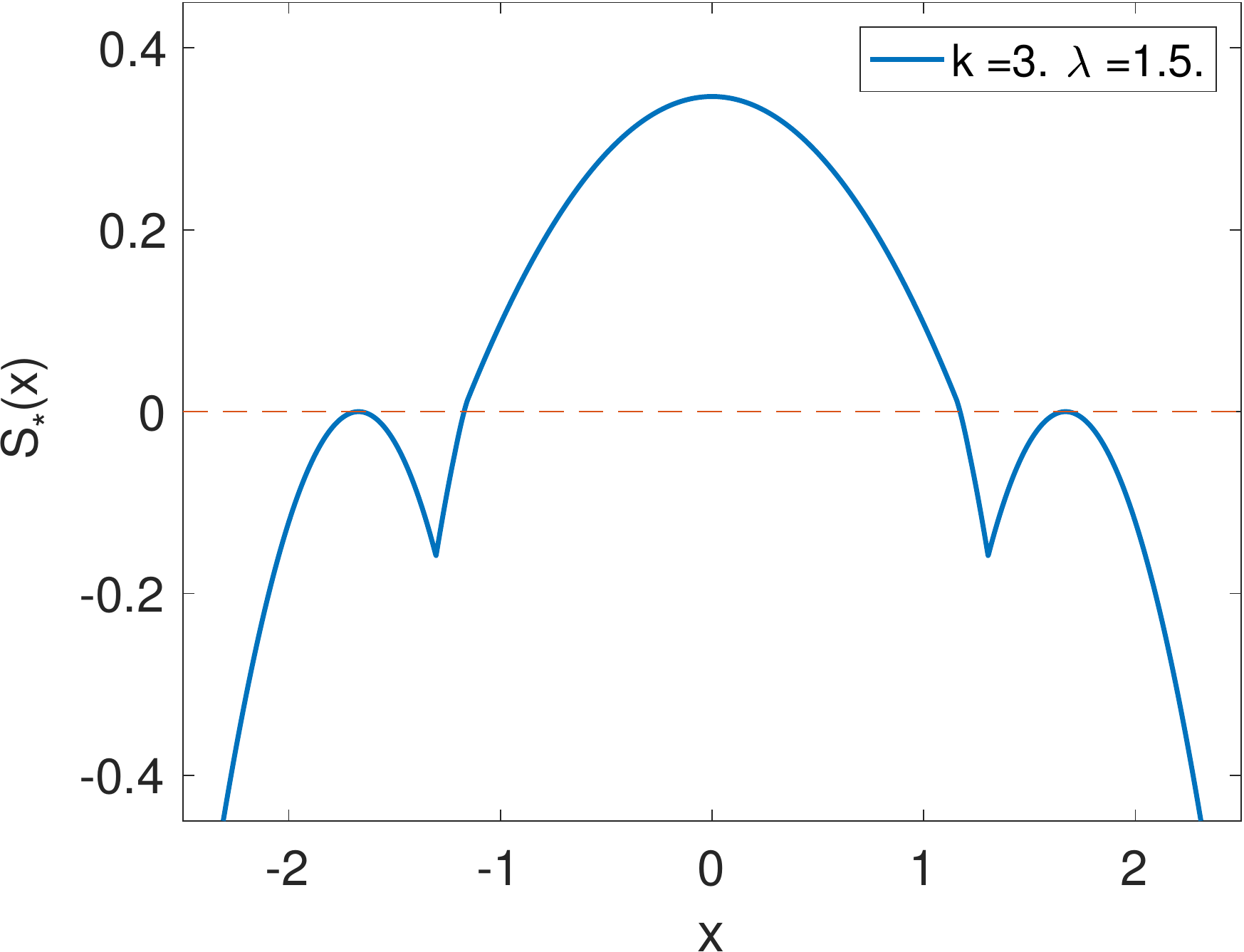}\phantom{AAAA}
\includegraphics[width=0.4\textwidth]{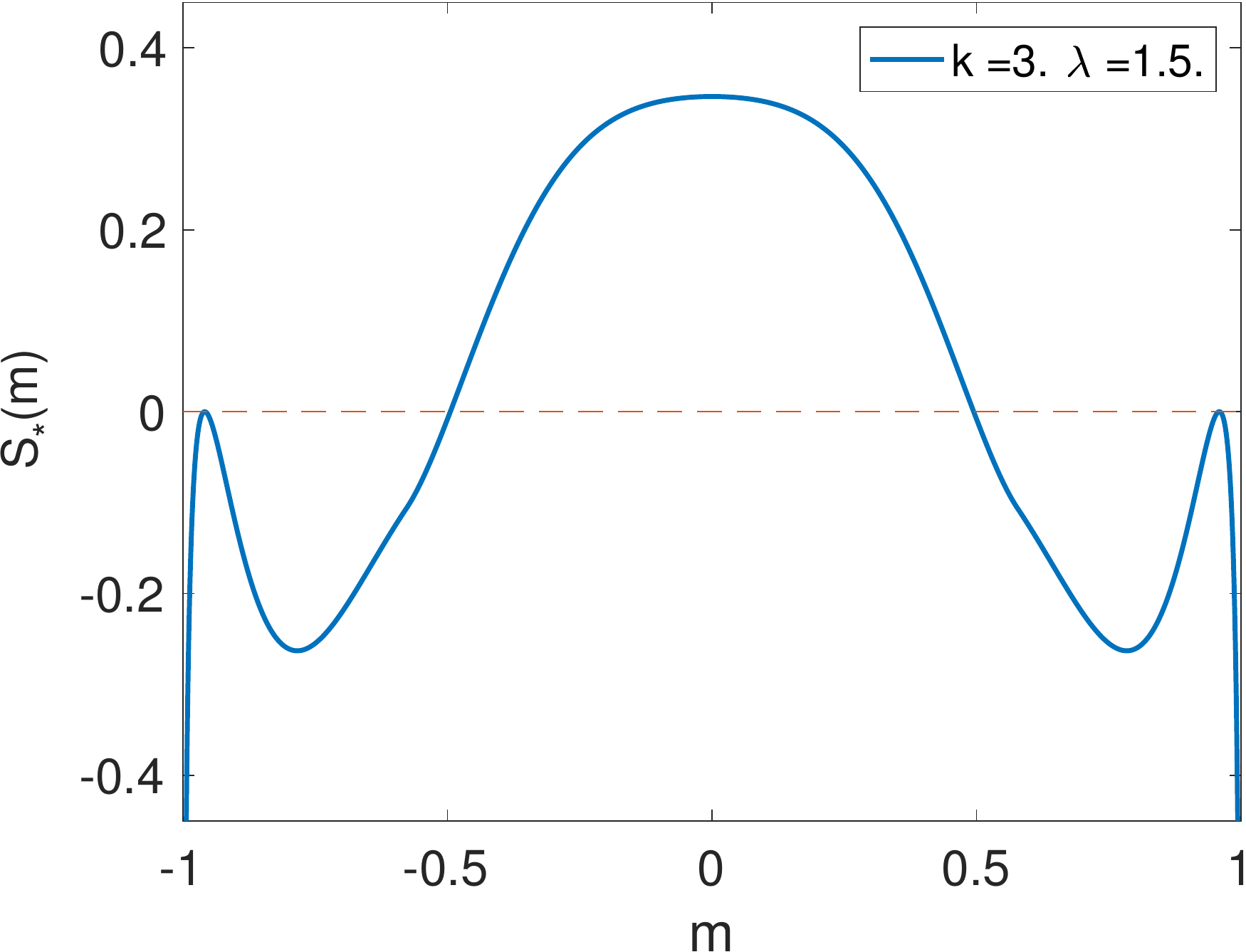}\\
\includegraphics[width=0.4\textwidth]{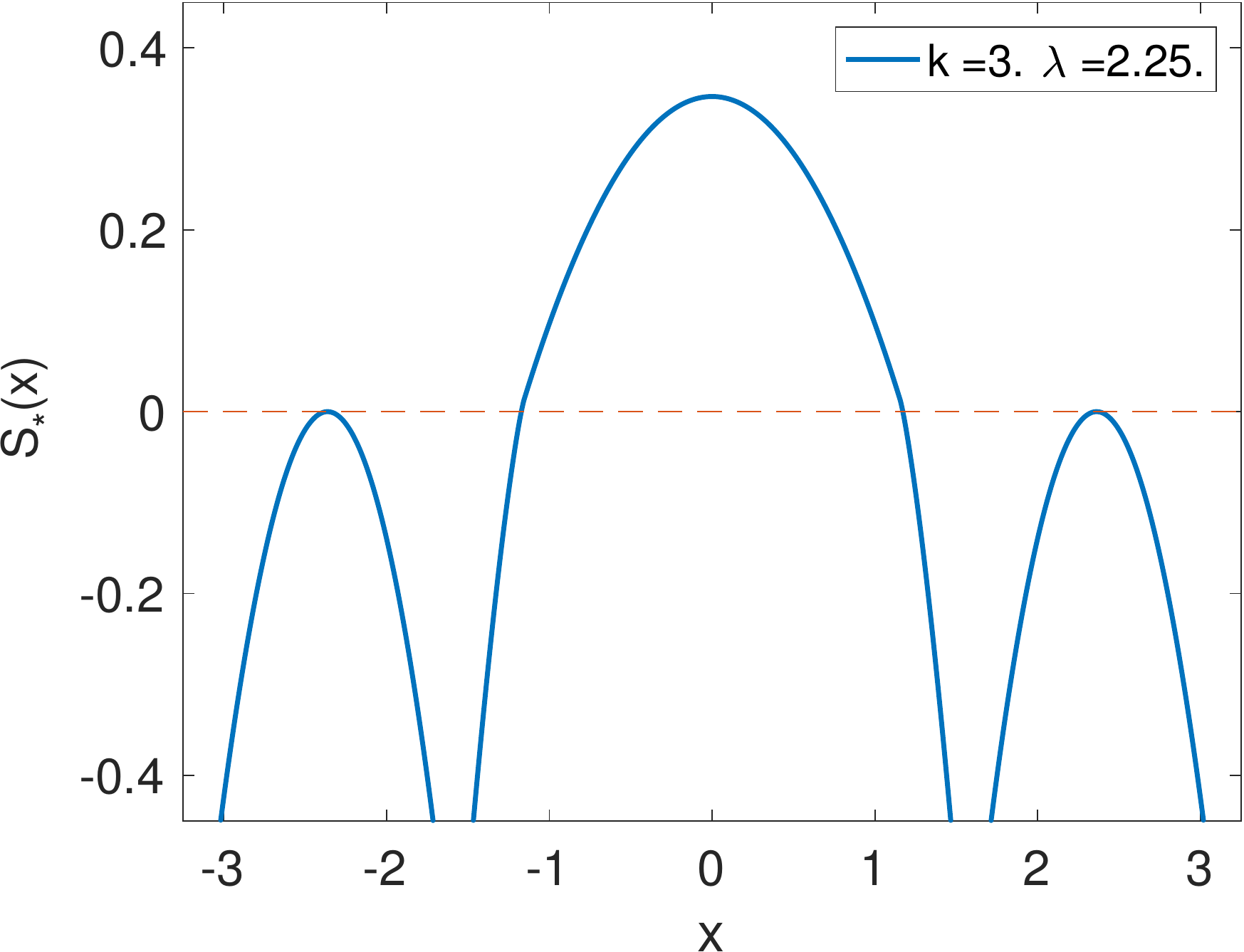}\phantom{AAAA}
\includegraphics[width=0.4\textwidth]{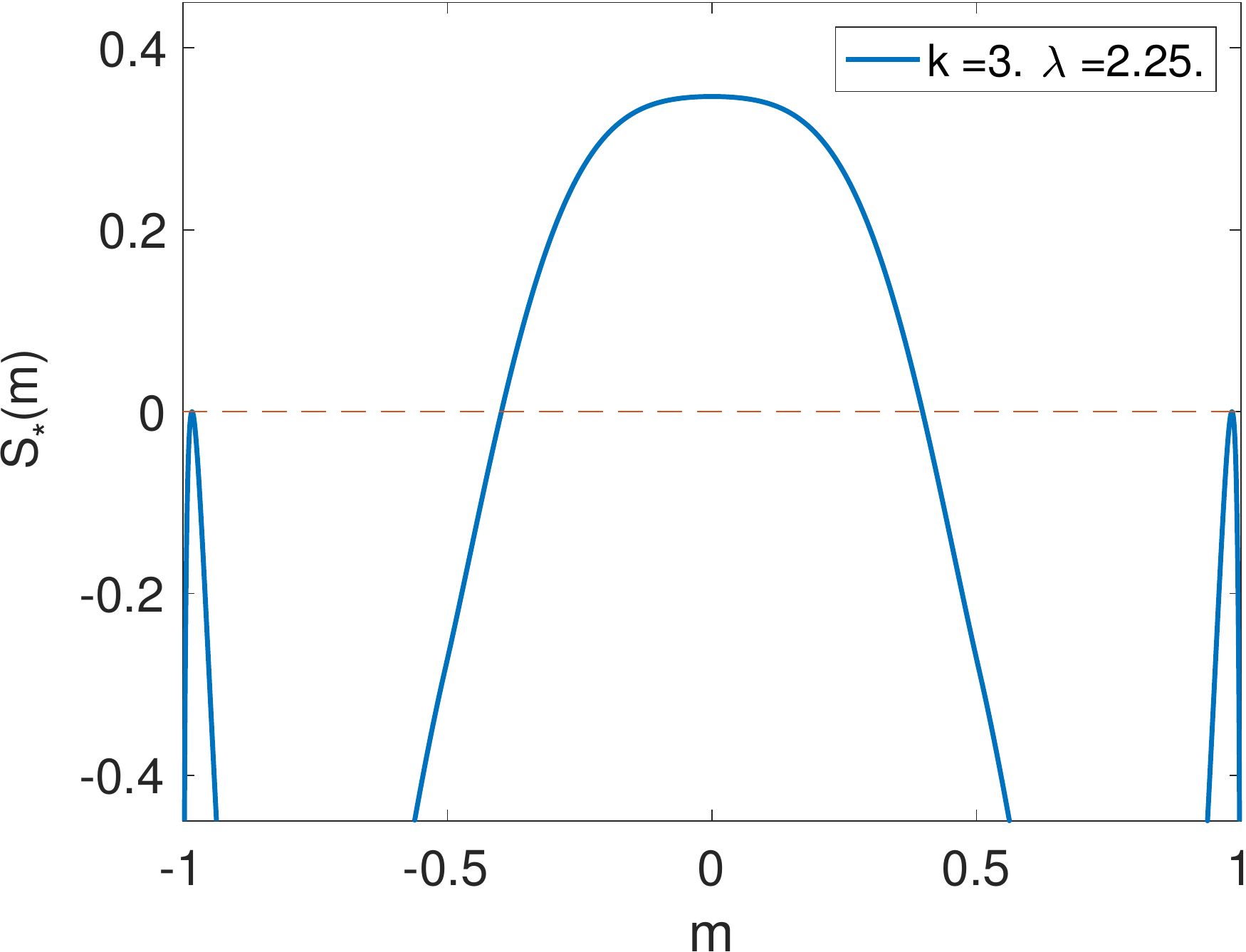}
\end{tabular}
\caption{Complexity (exponential growth rate of the expected number of critical points) in the spiked tensor model with  $k=3$ and 
(from top to bottom) $\lambda\in\{0.1,0.75,1.5,2.25\}$. Left column: complexity as a function of the objective value 
$x = f(\bsigma)$, $S_\star(x) = \max_m S_\star(m,x)$. Right column: complexity as a function of the scalar product
$m=\<\bu,\bsigma\>$, $S_\star(m)=\max_x S_{\star}(m,x)$. }\label{fig:landscapeEvolution2}
\end{figure}

The expressions for $S_\star(m,x)$ and $S_\knot(m,x)$ given in the previous section can be easily evaluated numerically:
the figures in this section demonstrate such evaluations. Throughout this section we consider $k=3$, but the behavior
for other values of $k\ge 3$ is qualitatively similar (with the important difference that, for $k$ even, the landscape is symmetric under 
change of sign of $m$). In Figure \ref{fig:landscapeFirst} we plot the region of the $(m,x)$ plane in which $S_\star(m,x)$ and $S_\knot(m,x)$ are non-negative, 
for $\lambda=2.25$. By Markov inequality, the probability of any critical point or any local maximum to be present outside these regions is exponentially small.

As anticipated in the introduction, we can identify two sets of local maxima:
\begin{itemize}
\item[$(i)$] \emph{Uninformative local maxima.} These have small $x$ (i.e. small value of the objective) and small $m$ (small correlation with the ground truth $\bu$).
They are also exponentially more numerous and we expect them to trap descent algorithms.
\item[$(ii)$] \emph{Good local maxima.} These have large $x$ (i.e. large  value of the objective) and large $m$ (large correlation with the ground truth $\bu$).
Reaching such a local maximum results in accurate estimation.
\end{itemize}

Figure \ref{fig:landscapeEvolution1} shows the evolution of the two `projections' $S_\knot(x) =\max_m S_{\knot}(m,x)$ and  $S_\knot(m) =\max_x S_{\knot}(m,x)$ 
which give the exponential growth rate of the number of local maxima as functions of the objective value $x=f(\bsigma)$ and the scalar product
$m=\<\bu,\bsigma\>$. Similar plots for the total number of critical points are found in Figure \ref{fig:landscapeEvolution2}.
We can identify several regimes of the signal-to-noise ratio $\lambda$:
\begin{enumerate}
\item For $\lambda$ small enough, we know that the landscape is the qualitatively similar to the case $\lambda=0$:
local maxima are uninformative. While they are spread along the $m$ direction, this is purely because of random fluctuations.
Local maxima with $m\approx 0$ are exponentially more numerous and have larger value.
\item As $\lambda$ crosses a threshold $\lambda_c$, the complexity develop a secondary maximum that touches $0$ at $m_*(\lambda)$ close to $1$.
This signals a group of local maxima  (or possibly only one of them) that are highly correlated with $\bu$. These are good local maxima, but
have smaller value than the best uninformative local maxima. Maximum likelihood estimation still fails.
\item Above a third threshold in $\lambda$, good local maxima acquire a larger value of the objective than uninformative ones. Maximum likelihood
succeeds. However, the most numerous local maxima are still uncorrelated with the signal $\bu$ and are likely to trap
algorithms.
\end{enumerate}

Let us emphasize once more that this qualitative picture is obtained by considering the \emph{expected number} of critical points.
In order to confirm that it holds for a typical realization of $\bY$, it would be important to compute the typical number as well.

\subsection{Explicit formula for complexity of critical points at a given location}
\label{sec:Explicit}

The projection $S_\star(m) = \max_{x} S_\star(m,x)$, which gives the expected number of critical points at a given scalar product
$m=\<\bu,\bsigma\>$,  has a simple explicit formula in the hemisphere $m \in [0,1]$. This is derived using elementary calculus by analyzing equation (\ref{eq:StarDef}). 

\begin{proposition} \label{prop:S_star_formula} The projection $S_\star(m) = \max_{x} S_\star(m,x)$ has the following explicit formula for $m \in [0,1]$:
\begin{equation}
S_\star(m)=\begin{cases}
S_{U}(m) & 0 \leq m <m_{c}\\
S_{G}(m) & m\geq m_{c}
\end{cases},
\end{equation}
where
\begin{align}
m_{c} := & \left(\frac{1}{\lambda}\frac{k-2}{\sqrt{2k(k-1)}}\right)^{1/k},\\
S_{U}(m) := & \frac{1}{2}\log(1-m^{2})-k\lambda^{2}m^{2k-2}(1-m^{2})+\frac{k}{k-2}\lambda^{2}m^{2k}+\frac{1}{2}\log(k-1),\\
S_{G}(m) := & \frac{1}{2}\log\left(1-m^{2}\right)-k\lambda^{2}m^{\left(2k-2\right)}(1-m^{2})-\left(\sqrt{\frac{1}{2} k}\lambda\cdot m^{k}\right)^{2}\\
 & +\sqrt{\frac{1}{2} k}\cdot\lambda \cdot m^{k}\cdot\sqrt{\left(1+\frac{1}{2} k\cdot\lambda^{2}m^{2k}\right)}+\sinh^{-1}\left(\sqrt{\frac{1}{2} k}\lambda m^{k}\right).
\end{align}

\end{proposition}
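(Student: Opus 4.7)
The plan is to reduce the two-variable maximization defining $S_\star(m) = \max_x S_\star(m,x)$ to a one-variable optimization in a scaling that matches the argument of $\Phi_\star$. Concretely, I set $t = \sqrt{2k/(k-1)}\,x$ and $\mu = \mu(m) := \sqrt{2k/(k-1)}\,\lambda m^k$, so that (\ref{eq:StarDef}) rewrites as
\[
S_\star(m,x) = \tfrac{1}{2}(\log(k-1)+1) + \tfrac{1}{2}\log(1-m^2) - k\lambda^2 m^{2k-2}(1-m^2) + g(t),
\]
with $g(t) := -\tfrac{k-1}{2k}(t-\mu)^2 + \Phi_\star(t)$. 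Since the prefactor no longer involves $x$, the proposition reduces to computing $g^*(\mu) := \max_{t\in\R} g(t)$ and determining, as $m$ varies in $[0,1]$, which of the two pieces in the definition of $\Phi_\star$ the maximizer $t^*$ lies in.

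The next step is to differentiate $\Phi_\star$. A direct calculation gives $\Phi_\star'(t)=t/2$ on $|t|\le 2$ and $\Phi_\star'(t) = (t - \mathrm{sgn}(t)\sqrt{t^2-4})/2$ on $|t|>2$, and one checks $\Phi_\star'$ is continuous at $t=\pm 2$. The first-order condition $g'(t^*)=0$ in the inner regime is linear and yields $t^* = 2(k-1)\mu/(k-2)$, which lies in $[-2,2]$ precisely when $\mu \le (k-2)/(k-1)$, i.e., when $m \le m_c$. The second derivative $g''=-(k-2)/(2k)$ is negative, confirming this is a maximum. Substituting gives $g^* = \tfrac{(k-1)\mu^2}{2(k-2)} - \tfrac{1}{2}$; using $\mu^2 = 2k\lambda^2 m^{2k}/(k-1)$ and adding back the $m$-dependent prefactor reproduces exactly $S_U(m)$.

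For $m \ge m_c$ the maximizer satisfies $t^* > 2$, and the key idea is the parametrization $t^* = \alpha + 1/\alpha$ with $\alpha>1$. Under it, $\sqrt{(t^*)^2-4} = \alpha - 1/\alpha$, the first-order condition $\tfrac{k-1}{k}(t^*-\mu) = 1/\alpha$ becomes the quadratic $(k-1)\alpha^2 - (k-1)\mu\alpha - 1 = 0$ with positive root $\alpha = \tfrac{\mu}{2} + \tfrac{1}{2}\sqrt{\mu^2 + 4/(k-1)}$, and both pieces of $g(t^*)$ collapse: a short computation shows $\Phi_\star(\alpha+1/\alpha) = 1/(2\alpha^2) + \log\alpha$ (since $\sqrt{(t^*)^2/4-1} + t^*/2 = \alpha$) and $(t^*-\mu)^2 = k^2/((k-1)^2\alpha^2)$, giving
\[
g^* = -\frac{1}{2(k-1)\alpha^2} + \log\alpha.
\]

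The last step is to re-express this in the natural normalization $\nu := \sqrt{k/2}\,\lambda m^k = \tfrac{\sqrt{k-1}}{2}\mu$. Then $\alpha = (\nu + \sqrt{\nu^2+1})/\sqrt{k-1}$, which gives $\log\alpha = \sinh^{-1}(\nu) - \tfrac{1}{2}\log(k-1)$, and, via the identity $(\nu+\sqrt{\nu^2+1})(\sqrt{\nu^2+1}-\nu)=1$, $1/\alpha^2 = (k-1)(2\nu^2+1 - 2\nu\sqrt{\nu^2+1})$. Adding the $m$-dependent prefactor and the constant $\tfrac{1}{2}(\log(k-1)+1)$ cancels the $-\tfrac{1}{2}\log(k-1)$ and combines the $-1/2$ with the $-1/(2(k-1)\alpha^2)$ contribution to produce exactly $-\nu^2 + \nu\sqrt{\nu^2+1}$, recovering $S_G(m)$. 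The only nontrivial step is algebraic bookkeeping: guessing the substitution $t = \alpha+1/\alpha$ is what forces the semicircle log-term inside $\Phi_\star$ to produce the $\sinh^{-1}$ in $S_G$, and the main obstacle in writing this cleanly is keeping the two reparametrizations $(\alpha\leftrightarrow\mu$ and $\nu\leftrightarrow\mu)$ straight.
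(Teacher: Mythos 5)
Your proof is correct and follows essentially the same route as the paper's: both reduce $\max_x S_\star(m,x)$ to a one-dimensional concave optimization in the rescaled variable $t=\sqrt{2k/(k-1)}\,x$, identify $m_c$ as the threshold at which the interior critical point exits $[-2,2]$, and solve the first-order condition separately in the two regimes. The only difference is cosmetic: where the paper (via Lemma \ref{lem:maximization_problem}) solves the outer-regime first-order condition by the quadratic formula and then substitutes the identity $b-2ax^\ast=\sqrt{(x^\ast)^2-4}$, you use the parametrization $t^\ast=\alpha+1/\alpha$, which leads to the same expression for $S_G$.
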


Analysis of this formula confirms some of the qualitative observations from Section \ref{sec:eval_complexity}. For $\lambda$ very small, namely $\lambda < (k-2)/\sqrt{2k(k-1)}$, we have that $m_c > 1$. In this case, $S_\star(m) \equiv S_U(m)$ and landscape is qualitatively similar to the case $\lambda = 0$. When $\lambda \geq (k-2)/\sqrt{2k(k-1)}$, we have that $m_c \leq 1$ and the function $S_G$ captures the behavior of possible ``good'' critical points which may exist at $m > m_c$. Further analysis of the function $S_G$ is carried out in Proposition \ref{prop:S_G_analysis}.

\begin{proposition} \label{prop:S_G_analysis}
The function $S_{G}$ is non-positive, $S_{G}(m)\leq0$ for all $m\in[0,1]$. Moreover, $S_{G}(m) = 0$ if and only if $m$ satisfies:
\begin{equation}\label{eq:good_location}
m^{2k-4}(1-m^{2})=\frac{1}{2k\lambda^{2}}\, .
\end{equation}
In particular, if we set:
\begin{equation}
\lambda_c := \sqrt{\frac{1}{2k} \frac{(k-1)^{(k-1)} }{ (k-2)^{(k-2)}  }  }, \label{eq:lambda_c}
\end{equation}
then we have that if $\lambda < \lambda_c$, then $S_{G}(m) < 0$ and if $\lambda \geq \lambda_c$, then $S_{G}$ has a unique zero in the domain $m \in [m_c,1]$.
\end{proposition}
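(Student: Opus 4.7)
The plan is to introduce a two-variable auxiliary function that cleanly separates the $\lambda$-dependence from the structural form of $S_G$. Setting $q = q(m) := 2k\lambda^2 m^{2k-4}(1-m^2)$, the locus in equation~(\ref{eq:good_location}) becomes simply $q = 1$. Since $u^2 = (k/2)\lambda^2 m^{2k} = q m^4/(4(1-m^2))$ and $k\lambda^2 m^{2k-2}(1-m^2) = qm^2/2$, one can rewrite
\begin{align*}
S_G(m) = \tilde{S}_G\bigl(q(m), m\bigr), \qquad \tilde{S}_G(q, m) := \frac{1}{2}\log(1-m^2) - \frac{q m^2}{2} + \phi(u),
\end{align*}
where $u = \frac{m^2\sqrt{q}}{2\sqrt{1-m^2}}$ and $\phi(u) := -u^2 + u\sqrt{1+u^2} + \sinh^{-1}(u)$ satisfies $\phi'(u) = 2(\sqrt{1+u^2} - u) > 0$ and is concave.

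With $m \in (0,1)$ fixed, I would then show that $q \mapsto \tilde{S}_G(q, m)$ attains its unique maximum on $[0, \infty)$ at $q = 1$, with value $0$. Stationarity $\partial_q \tilde{S}_G = 0$ reduces, after computing $\phi'(u)\cdot\partial_q u$, to the algebraic equation $\sqrt{1+u^2} - u = \sqrt{q(1-m^2)}$; substituting $u^2 = qm^4/(4(1-m^2))$ and squaring collapses this to $q = 1$. Together with the boundary values $\tilde{S}_G(0, m) = \frac{1}{2}\log(1-m^2) \le 0$ and $\tilde{S}_G(q, m) \to -\infty$ as $q \to \infty$, this confirms $q=1$ is the global maximum. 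Evaluating there: $u = \frac{m^2}{2\sqrt{1-m^2}}$, $\sqrt{1+u^2} = \frac{2-m^2}{2\sqrt{1-m^2}}$, and $\sinh^{-1}(u) = \log(u+\sqrt{1+u^2}) = -\frac{1}{2}\log(1-m^2)$, after which the logarithmic terms cancel and the rational terms in $m^2$ sum exactly to $0$. Hence $S_G(m) \le 0$ on $(0,1)$, with equality if and only if $q(m) = 1$, which is precisely~(\ref{eq:good_location}).

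For the characterization of zeros, I would analyze $h(m) := m^{2k-4}(1-m^2)$ on $[0,1]$: elementary calculus gives $h(0) = h(1) = 0$ and a unique maximum at $m_*^2 = (k-2)/(k-1)$ with maximum value $(k-2)^{k-2}/(k-1)^{k-1}$. Consequently, $h(m) = 1/(2k\lambda^2)$ has no solution when $\lambda < \lambda_c$ with $\lambda_c$ given by~(\ref{eq:lambda_c}), and exactly two solutions $m_1 < m_* < m_2$ when $\lambda > \lambda_c$. To identify $m_2$ as the unique zero of $S_G$ in $[m_c, 1]$, I would use $\lambda^2 m_c^{2k} = (k-2)^2/(2k(k-1))$ (from the definition of $m_c$) to compute $2k\lambda^2 \, h(m_c) = (k-2)^2(1-m_c^2)/((k-1) m_c^4)$ and show this exceeds $1$ for $\lambda > \lambda_c$: the relation yields $m_c = m_*(\lambda_c/\lambda)^{1/k} \le m_*$, and both $1-m_c^2$ and $1/m_c^4$ increase monotonically as $m_c$ decreases below $m_*$, with equality exactly at $\lambda=\lambda_c$. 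This places $m_c \in (m_1, m_*]$, identifying $m_2$ as the unique zero in $[m_c, 1]$.

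The main algebraic obstacles are the reduction of $\partial_q \tilde{S}_G = 0$ to the single root $q = 1$ and the cancellation verifying $\tilde{S}_G(1, m) = 0$; both require careful manipulation of $\sqrt{1+u^2}$ and $\sinh^{-1}$ under the substitution $u = m^2/(2\sqrt{1-m^2})$. A minor boundary subtlety is that $\tilde{S}_G(q, 0) \equiv 0$ so $S_G(0) = 0$ trivially; the if-and-only-if in the statement must therefore be read on $(0, 1]$, but this has no bearing on the final claim since $m = 0 \notin [m_c, 1]$ whenever $\lambda \ge \lambda_c$.
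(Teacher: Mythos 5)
Your argument is correct and is essentially the paper's proof in a different parametrization: the paper fixes $\alpha=m^2$ and maximizes the auxiliary function $f_\alpha(x)$ over $x=\sqrt{k/2}\,\lambda m^k$, which is a monotone bijection of your variable $q$, and both routes identify the unique stationary point, verify the value $0$ there via the same $\sinh^{-1}$ cancellation, and then analyze the unimodal polynomial $m^{2k-4}(1-m^2)$ to obtain $\lambda_c$. Your final paragraph locating $m_c$ between the two roots $m_1<m_2$ supplies a detail the paper leaves implicit, and your remark that the ``if and only if'' must exclude $m=0$ is a correct (and harmless) observation.
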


The critical point $\lambda_c$ identified in Proposition \ref{prop:S_G_analysis} represents a qualitative change in the energy landscape. When $\lambda < \lambda_c$, then $S_G < 0$ and ``good'' critical points are exponentially rare. On the other hand, when $\lambda  \geq \lambda_c$ then $S_G$ and has a unique zero. This is the only location in the region $m > m_c$ where critical points are not exponentially rare, and this represents the best correlation with the signal $\bu$ that is achievable.

The proofs of Proposition \ref{prop:S_star_formula} and \ref{prop:S_G_analysis} are deferred to Appendix \ref{sec:calc_proofs}

\subsection{Proof ideas}

The proof of Theorems \ref{thm:critical_point} and \ref{thm:local_maxima} relies on a representation of the expected number of
critical points of a given index using the Kac-Rice formula. This approach was pioneered in \cite{fyodorov2004complexity,auffinger2013random} to study the case 
$\lambda=0$ of the present problem. 

Evaluating the expression produced by the Kac-Rice formula requires to estimate the expectation of determinant of $\thess f(\bsigma)$.
In the case  $\lambda=0$ considered in \cite{auffinger2013random}, $\thess f(\bsigma)$ is distributed as  $a\, \bW_n+b\, \id_n$ where $\bW_n\sim \GOE(n)$
is a matrix form the Gaussian Orthogonal Ensemble. This fact, together with the explicitly known joint distribution of the eigenvalues of $\bW_n$,
 is used in \cite{auffinger2013random} to express the expected determinant in terms of  the distribution of one eigenvalue, and a normalization 
that is computed using Selberg's integral.

In the present case, $\thess f(\bsigma)$ is distributed as  $a\, \bW_n+b\, \id_n+c\, \be_1\be_1^{\sT}$, i.e. a rank-one deformation of the previous matrix. 
Instead of an exact representation, we use the asymptotic distribution of the eigenvalue of this matrix, as well as its large deviation properties,
obtained in \cite{maida2007large}.

\section{Related literature}
\label{sec:Related}

The complexity of random functions  has been the object of large amount of
work within statistical physics, in particular in the context of mean field glasses and spin glasses.
The function of interest is --typically-- the Hamiltonian or energy function, and its local minima 
are believed to capture the long-time behavior of dynamics, as well as thermodynamic properties.

In particular, the energy function (\ref{eqn:objective}) was first studied by Crisanti and Sommers in \cite{crisanti1995thouless},
for the case  $\lambda =0$. This is referred to as the \emph{spherical $p$-spin model} in the physics literature.
The paper \cite{crisanti1995thouless},  uses non-rigorous methods from statistical physics to derive the complexity function,
which corresponds to $S_0(x)$ in the notations used here. An alternative derivation using random matrix theory was proposed by 
Fyodorov \cite{fyodorov2004complexity}.
Connections with thermodynamic quantities can be found in \cite{crisanti1993sphericalp}.
The impact of the rough energy landscape on the behavior of Langevin dynamics was studied in a number of papers, see e.g.
\cite{crisanti1993sphericalp,bouchaud1998out}.

A mathematically rigorous calculation often expected number of critical points of any index --and the associated complexity-- 
was first carried out in \cite{auffinger2013random}, again for the pure noise case case $\lambda=0$. (See also \cite{auffinger_gba} for mathematically rigorous results for the complexity of some more general ``pure noise'' random surfaces.) As mentioned above, the 
expected number of critical points is not necessarily representative of typical instances. However, for the pure noise case $\lambda=0$,
it was expected that the number of critical points concentrates around its expectation. This was recently confirmed by Subag
via a second moment calculation \cite{subag2015complexity}. (See also \cite{subag2017geometry} and \cite{subag_zeitouni} for additional information about the 
landscape geometry.)

Finally, the typical number of critical points of the spike-tensor model and variants 
is derived in the forthcoming article \cite{physics_spiked_models} using  non-rigorous 
methods from statistical physics. This computation indicates that typical and expected number of critical points 
do not always coincide for the spiked tensor model, contrary to what happens for 
the pure noise case $\lambda = 0$.  Also \cite{physics_spiked_models}  studies generalized spiked models
for which a large number of additional local maxima appear that are strongly correlated with the spike.

\section{Proofs}
\label{sec:Proof}

In this section we prove Theorem \ref{thm:critical_point} and \ref{thm:local_maxima}. We begin by introducing some definitions
and notations in Section \ref{sec:Defs}. We next state some useful lemmas in Section \ref{sec:Preliminary}, with proofs in Sections \ref{sec:ProofJointDensity}
and \ref{sec:ProofExact}. Finally, we prove  Theorems \ref{thm:critical_point} and \ref{thm:local_maxima} in Section \ref{sec:ProofCritical} and
\ref{sec:ProofMaxima}.

\subsection{Definitions and notations}
\label{sec:Defs}

We will generally use lower case symbols (e.g. $a,b,c$) for scalars, lower-case boldface symbols (e.g. $\ba, \bb, \bc$) for vectors,
and upper case boldface (e.g. $\bA,\bB,\bC$) for matrices. The identity matrix in $n$ dimensions is denoted by $\id_n$, and the 
canonical basis in $\R^n$ is denoted by $\be_1, \ldots, \be_n$.
Given a vector $\bv\in\reals^n$, we write $\Proj_{\bv} = \bv\bv^{\sT}/\|\bv\|_2^2$ for the orthogonal projector onto the subspace spanned by $\bv$,
and by $\Projp_{\bv} = \id-\Proj_{\bv}$ the projector onto the orthogonal subspace.

For symmetric matrix $\bB_n\in\reals^{n\times n}$, we denote by $\lambda_1(\bB_n)\ge \lambda_2(\bB_n)\ge \cdots\ge \lambda_n(\bB_n)$ the eigenvalues of $\bB_n$ 
in decreasing order. We will also write $\lambda_{\max}(\bB_n) = \lambda_1(\bB_n)$ and $\lambda_{\min}(\bB_n) = \lambda_n(\bB_n)$ for the maximum and minimum eigenvalues.

We denote by $\GOE(n)$ the Gaussian Orthogonal Ensemble in $n$ dimensions. Namely,
for a symmetric random matrix $\bW$ in $\reals^{n\times n}$, we write $\bW\sim\GOE(n)$ if the entries $(W_{ij})_{i\le j}$
are independent, with $(W_{ij})_{1\le i < j\le n}\sim_{iid}\normal(0,1/n)$ and $(W_{ii})_{1\le i\le n}\sim_{iid}\normal(0,2/n)$.

For a sequence of functions $f_n: \R^d \rightarrow \R_{\ge 0}$, $n \in \N_+$, we say that $f_n(\bx)$ is \emph{exponentially finite} on a set $\cX \subset \R^d$, if
\begin{align}
\limsup_{n\rightarrow \infty}\sup_{\bx \in \cX}\Big\l\frac{1}{n}\log f_n(\bx)  \Big\l  < \infty\, .
\end{align}
We say that $f_n(\bx)$ is \emph{exponentially vanishing} on a set $\cX \subset \R^d$, if 
\begin{align}
\lim_{n\rightarrow \infty}\sup_{\bx \in \cX} \frac{1}{n}\log f_n(\bx) = -\infty\, .
\end{align}
We say that $f_n(\bx)$ is \emph{exponentially trivial} on a set $\cX \subset \R^d$, if 
\begin{align}
\lim_{n\rightarrow \infty}\sup_{\bx \in \cX} \Big\l \frac{1}{n}\log f_n(\bx) \Big\l  = 0\, .
\end{align}
We say $f_n(\bx)$ is \emph{exponentially decaying} on a set $\cX \subset \R^d$, if 
\begin{align}
\limsup_{n\rightarrow \infty}\sup_{\bx \in \cX}  \frac{1}{n}\log f_n(\bx)  < 0\, .
\end{align}

For a metric space $(\mathcal S, d)$, we denote the open ball at $x \in \mathcal S$ with radius $r > 0$ by $\ball(x, r) = \{z \in \mathcal S: d(z, x) < r \}$. In 
$\R^d$, we will always use Euclidean distance. For any $x \in \R$ and $\delta > 0$, the open ball in $\R$ is denoted by $\ball(x, r) = (x - r, x + r)$. 
Let $\cP(\reals)$ be the space of probability measures on $\reals$. We will equip $\cP(\reals)$  with the Dudley distance: for two probability measures 
$\mu,\nu \in \cP(\reals)$, this is defined as
\begin{align}
d(\mu, \nu) = \sup\Big\{ \Big \l \int f \d \mu - \int f \d \nu \Big\l; \l f(x) \l \vee\Big\l \frac{f(x) - f(y)}{x-y} \Big\l \leq 1, \forall x \neq y \Big\}\,.
\end{align}
The open ball $\ball(\mu, \delta)$ contains the probability measures with Dudley distance less than $\delta$ to $\mu$.

Suppose $\mu$ is a probability measure on $\R$, we denote $H_\mu(z)$ as the Stieltjes transform of $\mu$ defined by 
(here $\text{conv}$ denotes the convex hull and $\C_+$ the upper half plane)
\begin{equation}
\begin{aligned}
H_\mu: \C_{+} \cup \R \setminus \text{conv}(\supp \mu) ~~ \rightarrow &~~ \C \\
z ~~ \mapsto &~~ \int_\R \frac{1}{z - \lambda} \mu(\d \lambda).
\end{aligned}
\end{equation}
$H_\mu$ is always injective, so we can define its inverse $G_{\mu}$: $G_\mu( H_\mu(z)) = z$. Denote $R_\mu$ as the R-transform defined by 
\begin{equation}
\begin{aligned}
R_\mu(w) : \text{Image}(H_\mu) ~~ \rightarrow &~~ \C,\\
w ~~ \mapsto & ~~G_\mu(w) - 1/w.
\end{aligned}
\end{equation} 

We denote $\sigma_{\rm sc}(\d \lambda) = \bfone_{|\lambda|\le 2}\sqrt{4-\lambda^2}/(2\pi) \d\lambda$ as the semi-circular law. The Stieltjes transform for the semi-circular law
is
\begin{equation}
H_{\sigma_{\rm sc}}(z) = \frac{z - \sqrt{z^2 - 4}}{2},
\end{equation}
and its R-transform is
\begin{equation}
R_{\sigma_{\rm sc}}(w) = w. 
\end{equation}

\subsection{Preliminary lemmas}
\label{sec:Preliminary}

We start by stating a form of the Kac-Rice formula that we will be  a key tool for our proof.
Essentially the same statement was used in  \cite{auffinger2013random}, and we refer to \cite{adler2009random} for general proofs and broader context.
\begin{lemma}
Let $f$ be a centered Gaussian field on $\S^{n-1}$ and let $\cA = (\cU_\alpha, \Psi_\alpha)_{\alpha \in \cI}$ be a finite atlas on $\S^{n-1}$. Set $f^{\alpha} = f \circ \Psi_\alpha^{-1} : \Psi_\alpha(U_\alpha) \subset \R^{n-1} \rightarrow \R$ and define $f_i^\alpha = \partial f^\alpha / \partial x_i$, $f_{ij}^\alpha = \partial^2 f^\alpha / \partial x_i \partial x_j$. Assume that for all $\alpha \in \cI$ and all $x, y \in \Psi_\alpha(\cU_\alpha)$, the joint distribution of $(f_i^\alpha(x), f_{ij}^\alpha (x))_{1 \leq i \leq j \leq n}$ is non-degenerate, and 
\begin{align}
\max_{i,j}\l \Var(f_{ij}^\alpha (x)) + \Var(f_{ij}^\alpha (y)) - 2 \Cov(f_{ij}^\alpha (x), f_{ij}^\alpha (y)) \l \leq K_\alpha \l \ln \l x - y \l \l^{-1 - \beta}  
\end{align}
for some $\beta > 0$ and $K_\alpha > 0$. For Borel sets $E \subset \R$ and $M \subset [-1,1]$, let 
\begin{align}
\Crt_{n,k}^f (M, E) = \sum_{\bsigma: \tgrad f(\bsigma) = 0} \ones \{ i(\thess f(\bsigma)) = k, f(\bsigma) \in E, \< \bsigma, \bu \> \in M\}.
\end{align}
Then, using $\d \bsigma$ to denote the usual surface measure on $\S^{n-1}$, and denoting by $\vphi_{\bsigma}(\bx)$ the density of $\grad f(\bsigma)$ at $\bx$,
we have
\begin{align}\label{eqn:ctk}
\E \{\Crt_{n,k}^f(M, E)\} = \int_{\< \bsigma, \bu \> \in M} \E[ \l \det(\thess f(\bsigma)) \l \cdot \ones \{ i(\thess f(\bsigma)) = k, f(\bsigma) \in E\} \l \tgrad f (\bsigma) = \bzero] \cdot \vphi_\bsigma( \bzero ) \cdot \d \bsigma
\end{align}
\end{lemma}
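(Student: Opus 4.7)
The plan is to reduce the statement to the classical Kac--Rice formula for smooth Gaussian fields on open subsets of $\reals^{n-1}$ (see for example \cite{adler2009random}) by localizing via the finite atlas $\cA=(\cU_\alpha,\Psi_\alpha)_{\alpha\in\cI}$, and then checking that the Euclidean quantities appearing chart by chart reassemble into the intrinsic objects on the sphere. First I would fix a measurable partition $\{V_\alpha\}_{\alpha\in\cI}$ of $\S^{n-1}$ with $V_\alpha\subset\cU_\alpha$, so that the total count $\Crt_{n,k}^f(M,E)$ is the sum over $\alpha$ of the analogous counts restricted to $\bsigma\in V_\alpha$. Under the non-degeneracy hypothesis, critical points are almost surely a finite discrete set, and the boundary overlaps of the partition have zero surface measure, so no critical point is double-counted in expectation.

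In each chart I would then apply the Euclidean Kac--Rice formula to $f^\alpha:\Psi_\alpha(\cU_\alpha)\to\reals$. The modulus condition on $\Var(f_{ij}^\alpha(\bx)-f_{ij}^\alpha(\by))$ is exactly the Dudley--Fernique hypothesis controlling the joint sample-path regularity of $(f^\alpha,\nabla f^\alpha,\nabla^2 f^\alpha)$, and the stated non-degeneracy of the joint law of $(f^\alpha_i(\bx),f^\alpha_{ij}(\bx))$ guarantees that the gradient density exists and varies continuously, which is what the disintegration with respect to $\{\nabla f^\alpha=\bzero\}$ requires. The structural point that makes the sphere-valued statement so clean is that at a critical point the Christoffel correction to the Riemannian Hessian is linear in $\tgrad f(\bsigma)=\bzero$ and therefore vanishes; once expressed in an orthonormal frame adapted to $\Psi_\alpha$, $\thess f(\bsigma)$ coincides with the Euclidean Hessian $(f^\alpha_{ij}(\bx))$, so in particular the index $k$ is intrinsic at critical points and can be transported freely between the two formulations.

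The final step is the change of variables $\bx=\Psi_\alpha(\bsigma)$: the Jacobian $|\det D\Psi_\alpha|$ relating $\d\bx$ to the surface measure $\d\bsigma$ cancels exactly against the Jacobian that appears when the density of $\nabla f^\alpha(\bx)$ at the origin is rewritten as $\vphi_\bsigma(\bzero)$ in an orthonormal frame on $T_\bsigma\S^{n-1}$, since that rewriting is a linear change of variables in the target whose Jacobian is $\sqrt{\det D\Psi_\alpha D\Psi_\alpha^{\sT}}$. Summing over $\alpha$ reassembles the chart integrals into the single integral on $\S^{n-1}$ stated in \eqref{eqn:ctk}. The step I expect to be the main obstacle is precisely this metric bookkeeping: checking that the Gramian of the coordinate frame cancels between the gradient density and the volume form so that no spurious Jacobian survives, and that the smoothness and non-degeneracy hypotheses really do propagate from the Euclidean statement of \cite{adler2009random} to the manifold-valued one. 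Both are standard but tedious; I would mirror the chart-by-chart verification used in \cite{auffinger2013random} for the $\lambda=0$ case of the same problem, since adding the deterministic rank-one spike $\lambda\bu^{\otimes k}$ preserves Gaussianity and does not affect the sample-path regularity of the derivatives.
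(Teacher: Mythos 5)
The paper offers no proof of this lemma: it is quoted as a known form of the Kac--Rice formula, with the reader sent to \cite{adler2009random} for the general proof and to \cite{auffinger2013random}, where the identical statement is used for the $\lambda=0$ case. So there is nothing in the paper to compare against line by line; what you have written is the standard chart-by-chart derivation of the manifold Kac--Rice formula from its Euclidean version, and it is correct in outline. Your key structural observations are right: a measurable partition subordinate to the finite atlas avoids double counting; the non-degeneracy and the logarithmic modulus-of-continuity hypothesis are exactly the regularity conditions under which the Euclidean Kac--Rice formula of \cite{adler2009random} applies; and at a critical point the Christoffel correction vanishes, so the coordinate Hessian and $\thess f(\bsigma)$ differ only by a congruence with the (positive definite) chart Gramian $g$, which preserves the index by Sylvester's law. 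The one place where your bookkeeping is stated a little loosely is the final cancellation: there are three Jacobian factors, not two. Relative to the intrinsic quantities, the coordinate Hessian determinant contributes $\det g$ (since $\det(f^\alpha_{ij}) = \det(g)\det(\thess f)$ at a critical point), the density of the coordinate gradient at $\bzero$ contributes $(\det g)^{-1/2}$ relative to $\vphi_\bsigma(\bzero)$, and converting $\d\bx$ to the surface measure contributes the remaining $(\det g)^{-1/2}$; only the product of all three equals $1$. As phrased, you cancel the last two against each other while simultaneously claiming the Hessians "coincide" in an orthonormal frame, which mixes two conventions; pick one (coordinate derivatives throughout, or an orthonormal frame throughout) and the computation closes cleanly.
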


The next lemma specialize the last formula to our specific choice of $f(\,\cdot\,)$, cf. Eq.~(\ref{eqn:objective}). Its proof can be found in Section \ref{sec:ProofJointDensity}.
\begin{lemma}\label{lem:jointdensity}
We have
\begin{align}
\E \{\Crt_{n,\star}(M,E)\} =& \int_{M} V_n(m) \cdot \E\{\, \l \det(\bH) \l \cdot \ones \{ f \in E\} \} \cdot \vphi_\bsigma(\bzero) \cdot (1-m^2)^{-1/2} \cdot \d m, \label{eqn:ctk1}\\
\E \{\Crt_{n,0}(M,E)\}=& \int_{M} V_n(m) \cdot \E\{\, \l \det(\bH) \l \cdot \ones\{ \bH \preceq 0 \} \cdot \ones \{ f \in E\} \} \cdot \vphi_\bsigma(\bzero) \cdot (1-m^2)^{-1/2} \cdot \d m,\label{eqn:ctk2}
\end{align}
where 
\begin{align}
V_n(m) = \Vol(\partial \ball^{n-1} ((1-m^2)^{1/2}))
\end{align}
is the area of the $(n-1)$-th dimensional sphere with radius $(1-m^2)^{1/2}$, and $\vphi_\bsigma(\bzero)$ is the density of $\bg$ at $\bzero$. 
Further the joint distribution of $f \in \R$, $\bg \in \R^{n-1}$, and $\bH \in \R^{(n-1) \times (n-1)}$ is given by
\begin{align}
f \eqnd & \lambda m^k + \frac{1}{\sqrt{2n}} Z,\\
\bg \eqnd & k \lambda m^{k-1} \sqrt{1-m^2} \cdot \be_1 + \sqrt \frac{k}{2n} \cdot \tbg_{n-1},\\
\bH \eqnd& k(k-1) \lambda m^{k-2} (1-m^2) \be_1 \be_1^\sT +  \sqrt \frac{k(k-1)(n-1)}{2n} \bW_{n-1} -  k\Big(\lambda m^k + \frac{1}{\sqrt{2n}} Z\Big)\id_{n-1},
\end{align}
where $Z \sim \cN(0,1)$, $\tbg_{n-1} \sim \cN(\bzero, \id_{n-1})$ and $\bW_{n-1} \sim \GOE(n-1)$ are independent. 
\end{lemma}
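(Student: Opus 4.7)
The plan is to apply the Kac-Rice formula from the preceding lemma, reduce the sphere integral over $\S^{n-1}$ to a one-dimensional integral over $m = \<\bu,\bsigma\>$ using rotational symmetry, and then identify the joint Gaussian law of $(f(\bsigma),\tgrad f(\bsigma),\thess f(\bsigma))$ at a fixed reference point. The law of $\bW$ is invariant under the diagonal action of the orthogonal group $O(n)$ on $(\reals^n)^{\otimes k}$, so for any $R \in O(n)$ fixing $\bu$ the joint law of $(f,\tgrad f,\thess f)$ at $R\bsigma$ coincides with that at $\bsigma$ once expressed in the rotated tangent frame. Parametrizing $\bsigma = m\bu+\sqrt{1-m^2}\,\omega$ with $\omega$ ranging over the unit sphere of $\bu^{\perp}$, the sphere surface measure factorizes as $(1-m^2)^{(n-3)/2}\,\d m\,\d\omega$, and integrating out $\omega$ by rotational symmetry produces exactly the prefactor $V_n(m)(1-m^2)^{-1/2}\,\d m$ appearing in Eqs.\ (\ref{eqn:ctk1})--(\ref{eqn:ctk2}).

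Next I would fix a reference point and compute the distribution explicitly. Choose an orthonormal basis $\be_1,\dots,\be_{n-1}$ of $T_\bsigma\S^{n-1}$ with $\be_1 = (\bu - m\bsigma)/\sqrt{1-m^2}$. Euler's identity for the degree-$k$ homogeneous $f$ gives $\thess f(\bsigma) = \Projp_\bsigma \nabla^2 f(\bsigma)\Projp_\bsigma - kf(\bsigma)\id$ restricted to $T_\bsigma\S^{n-1}$. Using $\nabla f(\bsigma) = k\<\bY,\bsigma^{\otimes(k-1)}\>$, $\nabla^2 f(\bsigma) = k(k-1)\<\bY,\bsigma^{\otimes(k-2)}\>$, and $\Projp_\bsigma\bu = \sqrt{1-m^2}\,\be_1$, the signal contributions from $\lambda\bu^{\otimes k}$ evaluate to $\lambda m^k$ for $f$, to $k\lambda m^{k-1}\sqrt{1-m^2}\,\be_1$ for $\bg$, and to $k(k-1)\lambda m^{k-2}(1-m^2)\be_1\be_1^{\sT}-k\lambda m^k\id_{n-1}$ for $\bH$, matching the claimed means.

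For the noise pieces I would exploit the symmetry of $\bW$: since $\bsigma^{\otimes j}$ is itself symmetric, $\<\bW,\bsigma^{\otimes j}\>$ equals $\<\bG,\bsigma^{\otimes j}\>$ with $\bG$ having i.i.d.\ $\normal(0,1)$ entries, so direct Gaussian computations show (i) $\<\bW,\bsigma^{\otimes k}\>\sim\normal(0,1)$, yielding the $Z/\sqrt{2n}$ noise in $f$; (ii) the projected gradient noise $\Projp_\bsigma\<\bW,\bsigma^{\otimes(k-1)}\>$ is an isotropic Gaussian on the $(n-1)$-dim tangent space with unit variance per component, producing $\sqrt{k/(2n)}\,\tbg_{n-1}$; and (iii) the projected Hessian noise $\Projp_\bsigma\<\bW,\bsigma^{\otimes(k-2)}\>\Projp_\bsigma$, read in the basis $\be_1,\dots,\be_{n-1}$, is a $\GOE(n-1)$ matrix up to the scale $\sqrt{k(k-1)(n-1)/(2n)}$. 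The cross-covariance between any two of these three noise pieces is a tensor proportional to $\bsigma$ in each free index and is therefore annihilated by $\Projp_\bsigma$, so $(Z,\tbg_{n-1},\bW_{n-1})$ are jointly Gaussian with zero pairwise covariance and hence mutually independent.

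This independence is the critical input: it implies that conditioning on $\tgrad f(\bsigma) = \bzero$ does not alter the joint law of $(f,\bH)$, so the conditional expectation in Eq.~(\ref{eqn:ctk}) collapses to an unconditional one, while $\vphi_\bsigma(\bzero)$ is simply the density of the Gaussian $\bg$ at $\bzero$. Combining with the rotational reduction produces Eq.~(\ref{eqn:ctk1}); inserting the local-maximum indicator $\ones\{\bH\preceq 0\}$, which corresponds to the Kac-Rice index $k = 0$, yields Eq.~(\ref{eqn:ctk2}). The main obstacle I anticipate is the $\GOE(n-1)$ identification for the projected Hessian noise: one must write $\<\bW,\bsigma^{\otimes(k-2)}\>$ as an explicit Gaussian matrix, carefully track contributions from coincident versus distinct tensor indices in the symmetrization, and verify that after removing the $\bsigma$-direction the induced covariance on $(n-1)\times(n-1)$ symmetric matrices matches $\E[(\bW_{n-1})_{ij}(\bW_{n-1})_{kl}] = (\delta_{ik}\delta_{jl}+\delta_{il}\delta_{jk})/(n-1)$, accounting correctly for the overall prefactor.
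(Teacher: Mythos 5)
Your proposal is correct and follows essentially the same route as the paper's proof: apply the Kac--Rice formula, use rotational invariance to reduce to the reference point $\bsigma=\be_n$, $\bu=m\be_n+\sqrt{1-m^2}\,\be_1$ and produce the factor $V_n(m)(1-m^2)^{-1/2}\,\d m$, compute the joint Gaussian law of $(f,\bg,\bH)$ there, and use the independence of the gradient noise from $(f,\bH)$ to drop the conditioning. Your treatment of the symmetrization combinatorics behind the $\GOE(n-1)$ identification and of the vanishing cross-covariances is exactly the verification the paper leaves implicit.
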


The next lemma provides a simplified expression. Its proof is deferred to Section
\ref{sec:ProofExact}.
\begin{lemma}\label{lem:exact}
We have
\begin{equation}\label{eqn:cp_st}
\begin{aligned}
&\E\{\Crt_{n, \star}(M,E)\} =  \PC_n \cdot \int_{E} \d x \int_{M} (1-m^2)^{-3/2} \d m \cdot \E \{ \l \det (\bH_n) \l \} \\
& \quad \quad \times \exp\Big\{n\Big[ \frac{1}{2} (\log(k-1) + 1) + \frac{1}{2} \log(1-m^2) - k \lambda^2 m^{2k-2} (1-m^2)  - (x - \lambda m^k)^2 \Big]\Big\}  
\end{aligned}
\end{equation}
and 
\begin{equation}\label{eqn:lm_st}
\begin{aligned}
&\E\{\Crt_{n, \knot}(M,E)\} =  \PC_n \cdot \int_{E} \d x \int_{M} (1-m^2)^{-3/2} \d m \cdot \E \{ \l \det (\bH_n) \l \cdot \ones\{\bH_n \preceq 0 \} \} \\
& \quad \quad \times \exp\Big\{n\Big[ \frac{1}{2} (\log(k-1) + 1) + \frac{1}{2} \log(1-m^2) - k \lambda^2 m^{2k-2} (1-m^2) - (x - \lambda m^k)^2 \Big] \Big\}  
\end{aligned}
\end{equation}
where, for $\bW_{n-1}\sim \GOE(n-1)$, 
\begin{align}
\bH_n = & \theta_n(m)  \cdot \be_1 \be_1^\sT +   \bW_{n-1} -  t_n(x) \cdot \id_{n-1},\\
t_n(x) =& \Big( \frac{2kn}{(k-1)(n-1)}\Big)^{1/2} \cdot x,\\
\theta_n(m) = & \Big( \frac{2k(k-1)n}{(n-1)} \Big)^{1/2} \cdot  \lambda m^{k-2} (1-m^2),\\
\PC_n =& 2\cdot \Big( \frac{n-1}{2 e} \Big)^{\frac{n-1}{2}} \cdot \Gamma\Big(\frac{n-1}{2}\Big)^{-1} \cdot \Big(\frac{n}{(k-1)e \pi}\Big)^{1/2}.
\end{align}
Further, $\PC_n$ is exponentially trivial. 
\end{lemma}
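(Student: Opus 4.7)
The plan is to start from the Kac--Rice representation in Lemma \ref{lem:jointdensity} and do careful bookkeeping: extract the explicit scalar prefactors, exploit that $f$ and $\bH$ are coupled only through the scalar Gaussian $Z$, and rescale the Hessian to the normalized form $\bH_n$.

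Concretely, I would proceed as follows. First, from Lemma \ref{lem:jointdensity}, the quantities $f$ and $\bH$ depend on the same scalar $Z \sim \cN(0,1)$ plus an independent $\bW_{n-1} \sim \GOE(n-1)$. Conditioning on $f = x$ pins down $Z = \sqrt{2n}(x - \lambda m^k)$, so that
\begin{align}
\E\bigl[|\det(\bH)|\,\ones\{f \in E\}\bigr] = \int_E \E\bigl[|\det(\bH)| \,\big|\, f = x\bigr]\, \vphi_f(x)\, \d x,
\end{align}
where $\vphi_f(x) = \sqrt{n/\pi}\,\exp\{-n(x-\lambda m^k)^2\}$ is the one-dimensional Gaussian density of $f$. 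This already produces the factor $(x-\lambda m^k)^2$ in the exponential of the target formula.

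Second, on the conditional event $f=x$ the Hessian becomes $\bH = C_n\,\bH_n$ with $C_n = \sqrt{k(k-1)(n-1)/(2n)}$ and $\bH_n$ as in the lemma (the identity term $-k f\, \id_{n-1}$ absorbs both the deterministic and the $Z$-dependent pieces into $-t_n(x)\id_{n-1}$, once one pulls out $C_n$). Since $C_n > 0$, this rescaling preserves both the absolute value of the determinant and the semi-definiteness event, so $|\det(\bH)| = C_n^{n-1}|\det(\bH_n)|$ and $\ones\{\bH \preceq 0\} = \ones\{\bH_n \preceq 0\}$; the local maxima case is therefore identical to the critical points case modulo one extra indicator.

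Third, I collect the remaining polynomial-in-$n$ factors. The sphere area is
\begin{align}
V_n(m) = \frac{2\pi^{(n-1)/2}}{\Gamma((n-1)/2)}\,(1-m^2)^{(n-2)/2},
\end{align}
and since $\bg \sim \cN(k\lambda m^{k-1}\sqrt{1-m^2}\,\be_1,\,(k/(2n))\id_{n-1})$,
\begin{align}
\vphi_\bsigma(\bzero) = (n/(\pi k))^{(n-1)/2}\,\exp\bigl\{-n k \lambda^2 m^{2k-2}(1-m^2)\bigr\},
\end{align}
which produces the $-k\lambda^2 m^{2k-2}(1-m^2)$ piece of the exponential. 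Multiplying $V_n(m)\cdot (1-m^2)^{-1/2}\cdot \vphi_\bsigma(\bzero)\cdot C_n^{n-1}\cdot \vphi_f(x)$, the $(n-1)/2$-power factors telescope to $[(k-1)(n-1)/2]^{(n-1)/2}$, and I recognize the prefactor $\PC_n$ together with the exponential of $n[\tfrac{1}{2}(\log(k-1)+1) + \tfrac{1}{2}\log(1-m^2)]$; the $(1-m^2)^{(n-2)/2}$ from $V_n$, the $(1-m^2)^{-1/2}$ factor, and the targeted $(1-m^2)^{-3/2}$ written outside are reconciled by absorbing $(1-m^2)^{n/2}$ into the exponential. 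Finally, exponential triviality of $\PC_n$ is Stirling: $\Gamma((n-1)/2) \sim \sqrt{4\pi/(n-1)}\cdot ((n-1)/(2e))^{(n-1)/2}$ makes $\PC_n$ polynomial in $n$.

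The computations are all elementary; the only real obstacle is the algebraic bookkeeping needed to match the three $(n-1)/2$-powers ($\pi^{(n-1)/2}$, $(n/\pi k)^{(n-1)/2}$, and $C_n^{n-1}$) exactly and to see that the residual $e^{-(n-1)/2}$ supplied by Stirling is precisely what is needed to turn $\PC_n \cdot e^{n/2}$ into the cleaner expression used in the statement. No genuinely new probabilistic input is required beyond Lemma \ref{lem:jointdensity}.
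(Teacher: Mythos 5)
Your proposal is correct and follows essentially the same route as the paper: pull out the scaling $C_n=\sqrt{k(k-1)(n-1)/(2n)}$ from the Hessian, integrate against the Gaussian density of $f$ (equivalently, condition on $f=x$, which fixes $Z$), and collect the prefactors $V_n(m)$, $\vphi_\bsigma(\bzero)$, $C_n^{n-1}$ into $\PC_n$ with Stirling giving exponential triviality. The bookkeeping you describe matches the paper's computation exactly, including the reconciliation of the $(1-m^2)$ powers.
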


The next lemma contains a well known fact that we will use several times in the proofs. It follows immediately from the joint distribution of eigenvalues in the 
$\GOE$ ensemble \cite{anderson2010introduction} follows, see for instance \cite{maida2007large}.
\begin{lemma}[Joint density of the eigenvalues of the spiked model]
Let $\bX_n = \theta \be_1 \be_1^\sT + \bW_{n}$, where $\bW_n \sim \GOE(n)$ and $\theta \geq 0$. The density joint for the eigenvalues of $\bX_n$ 
is given by
\begin{align}
\P_n^\theta(\d x_1, \ldots, \d x_n) = \frac{1}{Z_{n}^\theta} \cdot \prod_{i < j} \l x_i - x_j \l \cdot I_n(\theta, x_1^n) \cdot \exp\Big\{ - \frac{n}{4} \sum_{i=1}^n x_i^2 \Big\} \d x_1 \cdots \d x_n,
\end{align}
where $x_1^n$ denotes the vector $(x_1,\ldots, x_n)^\sT$, and $I_n$ is the spherical integral defined by 
\begin{align}
I_n(\theta, x_1^n) \eqndef \int_{\mathcal O_n} \exp\Big\{ \frac{n\theta}{2} \cdot (\bU \cdot \diag(x_1^n) \cdot \bU^\sT)_{11} \Big\} \d m_n(\bU),
\end{align}
with $m_n$ the Haar probability measure on $\mathcal O_n$ the orthogonal group of size $n$. 
\end{lemma}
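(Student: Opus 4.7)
This is the classical Weyl-type decomposition for the spiked GOE; the strategy is to write down the matrix density of $\bX_n$ on the space of symmetric matrices, change variables to eigenvalues and eigenvectors, and then integrate out the orthogonal component.

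First, I would write the density of $\bX_n = \theta\be_1\be_1^\sT + \bW_n$ relative to the Lebesgue measure on $n\times n$ symmetric matrices. Since $\bW_n \sim \GOE(n)$ has density proportional to $\exp\{-\tfrac{n}{4}\textup{tr}(\bW_n^2)\}$, translating by $\theta\be_1\be_1^\sT$ gives that $\bX_n$ has density proportional to $\exp\{-\tfrac{n}{4}\textup{tr}((\bX_n-\theta\be_1\be_1^\sT)^2)\}$. Expanding the square and discarding the $\bX_n$-independent factor $\exp\{-n\theta^2/4\}$ into the normalizing constant, I obtain a density proportional to
\begin{equation}
\exp\Big\{-\frac{n}{4}\textup{tr}(\bX_n^2) + \frac{n\theta}{2}(\bX_n)_{11}\Big\}.
\end{equation}

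Second, I would apply the Weyl integration formula (see \cite{anderson2010introduction}): under the parametrization $\bX_n = \bU\, \diag(x_1,\dots,x_n)\, \bU^\sT$ with $\bU\in \mathcal O_n$ and $x_1\ge\cdots\ge x_n$, the Lebesgue measure on symmetric matrices decomposes (up to a universal constant absorbed into $Z_n^\theta$) as $\prod_{i<j}|x_i - x_j|\, \d x_1\cdots \d x_n \otimes \d m_n(\bU)$. After this change of variables, $\textup{tr}(\bX_n^2) = \sum_i x_i^2$ depends only on the eigenvalues, while $(\bX_n)_{11} = (\bU\, \diag(x_1^n)\, \bU^\sT)_{11}$ is the only term that couples $\bU$ to the eigenvalues.

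Finally, I would push the joint density forward to the eigenvalues by integrating out $\bU$ against Haar measure, which by definition produces exactly $I_n(\theta, x_1^n)$. Collecting factors yields the claimed expression for $\P_n^\theta$. The only delicate point is the constant from the Weyl formula, which is independent of $\theta$ and $x_1^n$ and is thus absorbed into $Z_n^\theta$; the ordering of eigenvalues and the residual sign/permutation symmetry of $\bU$ likewise contribute only a $\theta$-independent combinatorial factor to the same normalization.
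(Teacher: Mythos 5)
Your derivation is correct and is precisely the standard argument that the paper invokes by citation to \cite{anderson2010introduction} and \cite{maida2007large} rather than proving: tilt the GOE matrix density by the rank-one shift, apply the Weyl change of variables, and integrate out the Haar-distributed eigenvector matrix to produce the spherical integral. The handling of the $\theta$-independent constants (the Gaussian factor $e^{-n\theta^2/4}$, the Weyl Jacobian constant, and the sign/permutation stabilizer) by absorbing them into $Z_n^\theta$ is exactly right.
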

Next, we state a lemma regarding the large deviations of the largest eigenvalue of the spiked model, proven\footnote{Notice that the formula  
in \cite{maida2007large} contains a typo, that is corrected here. Also, the normalization of $\bW_n$ is different from the
one in \cite{maida2007large}. Here the empirical spectral distribution converges to a semicircle supported on $[-2,2]$, while in 
\cite{maida2007large} the support is $[-\sqrt{2},\sqrt{2}]$.} in \cite{maida2007large}. 
\begin{lemma}[Large deviation of largest eigenvalue of the spiked model \cite{maida2007large}]\label{lem:LDP_spiked}
Let $\bX_n = \theta \be_1 \be_1^\sT + \bW_n$, where $\bW_n \sim \GOE(n)$, and denote by $\lambda_{\max}(\bX_n)$ the largest eigenvalue of $\bX_n$. Then we have
\begin{align}
\limsup_{n\rightarrow \infty} \frac{1}{n} \log \P( \lambda_{\max}(\bX_n) \leq t) \leq -  L(\theta,t),\\
\liminf_{n\rightarrow \infty} \frac{1}{n} \log \P( \lambda_{\max}(\bX_n) < t) \geq -  L(\theta,t_-),
\end{align}
where $L(\theta,t)$ is as defined in Eq. (\ref{eqn:LDP_spiked}).
\end{lemma}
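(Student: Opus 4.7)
The statement is essentially Theorem 1.2 of \cite{maida2007large} for the top eigenvalue of a rank-one perturbed GOE, after accounting for our different GOE normalization and correcting a factor typo that the authors flag. My plan is, first, to reconcile the GOE conventions: in \cite{maida2007large} the off-diagonal entries have variance $1/(2n)$ and the limiting spectrum is the semicircle on $[-\sqrt 2,\sqrt 2]$, whereas here $\bW_n$ has off-diagonal variance $1/n$ and spectral support $[-2,2]$, so $\bW_n \eqnd \sqrt 2\,\tilde{\bW}_n$ for $\tilde{\bW}_n$ a Maida-convention GOE. Consequently $\bX_n \eqnd \sqrt 2(\tilde\theta\be_1\be_1^\sT+\tilde{\bW}_n)$ with $\tilde\theta:=\theta/\sqrt 2$, and $\lambda_{\max}(\bX_n) \eqnd \sqrt 2\,\lambda_{\max}(\tilde{\bX}_n)$. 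Next, I would invoke the full LDP of \cite{maida2007large} for $\lambda_{\max}(\tilde{\bX}_n)$ at speed $n$ with good rate function $\tilde L(\tilde\theta,\cdot)$, specialize it to the half-lines $(-\infty,\tilde t]$ and $(-\infty,\tilde t)$, and transport it through the linear map $t=\sqrt 2\tilde t$. This yields the two inequalities of the lemma with $L(\theta,t):=\tilde L(\theta/\sqrt 2,\,t/\sqrt 2)$; the asymmetry between $\le$ in the upper bound and $<$ in the lower bound is the usual distinction between large deviation upper bounds on closed sets and lower bounds on open sets.

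It remains to verify that this pulled-back $L$ equals the explicit expression in \eqref{eqn:LDP_spiked} and to correct the typo. The substitution $y\mapsto y/\sqrt 2$ turns $\sqrt{y^2-2}$ into $\sqrt{y^2-4}/\sqrt 2$ and the lower integration endpoint $\tilde\theta+1/\tilde\theta$ into $(\theta+1/\theta)/\sqrt 2$, so together with the Jacobian Maida's integral term becomes $\tfrac14\int_{\theta+1/\theta}^t\sqrt{y^2-4}\,dy$; the quadratic and linear pieces rescale analogously. To pin down the correct formula and flag the typo, I would check three conditions that together uniquely identify the rate function: $L(\theta,t)=0$ at the BBP limit $t=\theta+1/\theta$ for $\theta>1$; $L(\theta,t)=+\infty$ for $t<2$, since the bulk forbids $\lambda_{\max}$ from sinking below the semicircle edge; and $\partial_t L|_{t=\theta+1/\theta}=0$. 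A direct differentiation of \eqref{eqn:LDP_spiked} gives $\partial_tL(\theta,t)=\tfrac14\sqrt{t^2-4}-\tfrac{\theta}{2}+\tfrac t4$, which vanishes at $t=\theta+1/\theta$ via the identity $\sqrt{(\theta+1/\theta)^2-4}=\theta-1/\theta$ valid for $\theta>1$, and then integrating back recovers $L(\theta,\theta+1/\theta)=0$.

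The main obstacle is precisely this rescaling-and-verification bookkeeping: the LDP itself---whose proof in \cite{maida2007large} rests on the explicit joint density of eigenvalues, delicate asymptotics of the rank-one Harish-Chandra-Itzykson-Zuber spherical integral $I_n(\theta,x_1^n)$, and a contraction from a joint LDP for the empirical spectral measure and the largest eigenvalue---is imported wholesale. The substantive work is only to transfer it cleanly through the $\sqrt 2$ rescaling and to reconcile the resulting expression with \eqref{eqn:LDP_spiked}, the latter being exactly where the consistency checks above detect and fix the published typo.
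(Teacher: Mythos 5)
Your proposal is correct and matches what the paper does: the lemma is imported wholesale from Ma\"{i}da's LDP for the largest eigenvalue of a rank-one deformed GOE, transported through the $\sqrt{2}$ rescaling of conventions, with the typo fixed (the paper handles this only via a footnote and gives no further proof). Your consistency checks ($L=0$ and $\partial_t L=0$ at $t=\theta+1/\theta$ via $\sqrt{(\theta+1/\theta)^2-4}=\theta-1/\theta$, and $L=\infty$ below the bulk edge) are valid and are a sensible way to confirm the corrected formula.
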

For symmetric matrix $\bB_n\in\reals^{n\times n}$, denote by $L_{n-1}(\bB_n) = 1/(n-1) \cdot \sum_{i=2}^n \delta_{\lambda_i(B_n)}$
the empirical distribution of the $n-1$ smallest eigenvalues. 

We next state three useful lemmas on the spherical integral from the papers \cite{maida2007large,guionnet2005fourier}. 
\begin{lemma}[Continuity of spherical integral I, \cite{guionnet2005fourier}, Lemma 14]
\label{lem:continuity_si}
For any $\theta, \eta > 0$,  there exists a function $g_{\theta, \eta} (\delta): \R_{\ge 0} \rightarrow \R_{\ge 0}$ with $\lim_{z \rightarrow 0_+} g_{\theta, \eta} (z) = 0$, such that the following holds.
Let $\bx, \by\in\reals^n$ be two vectors, with $x_{\max} = \max_{i\le n} x_i$, $x_{\min} = \min_{i\le n} x_i$, $y_{\max} = \max_{i\le n} y_i$, $y_{\min} = \min_{i\le n} y_i$.
Let $\mu_x$, $\mu_y$ be their empirical distributions and define $H_{\bx}(z) =(1/n)\sum_{i=1}^n 1/(z - x_i)$. 
If $d(\mu_x, \mu_y) \le \delta$ and  $\theta \in H_{\bx}([x_{\min} - \eta, x_{\max} + \eta]^c) \cap H_{\by}( [y_{\min} - \eta, y_{\max} + \eta]^c )$, then 
for sufficiently large $n$
\begin{align}
\Big\l \frac{1}{n} \log I_n(\theta,\bx)  - \frac{1}{n}\log I_n(\theta, \by)  \Big\l \leq g_{\theta, \eta} (\delta).
\end{align}
\end{lemma}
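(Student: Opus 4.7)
The plan is to follow the Guionnet--Maida strategy of reducing $I_n(\theta,\bx)$ to a one-dimensional saddle-point integral whose dependence on $\bx$ factors through the empirical measure $\mu_\bx$, and then arguing that the saddle location and the exponent value are Lipschitz in $\mu_\bx$ (in Dudley distance) under the stated hypothesis.

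\emph{Step 1 (Reduction to a Laplace integral).} Since $\bU\be_1$ is uniform on $\S^{n-1}$ for $\bU\sim\textup{Haar}(\mathcal{O}_n)$, the uniform measure on $\S^{n-1}$ may be realised as $\bg/\sqrt n$ with $\bg\sim\normal(0,\id_n)$ conditioned on $\|\bg\|^2=n$. Writing the conditioning through the Bromwich representation of $\delta(\|\bg\|^2-n)$ and performing the Gaussian integration gives
\begin{equation*}
I_n(\theta,\bx)\;=\;C_n\cdot\frac{1}{2\pi i}\int_{c-i\infty}^{c+i\infty} e^{n F_\bx(\lambda)}\,\d\lambda,\qquad F_\bx(\lambda)\;=\;-\lambda-\tfrac{1}{2}\!\int\!\log(1-2\lambda-\theta x)\,\d\mu_\bx(x),
\end{equation*}
with $C_n$ sub-exponential in $n$. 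The saddle-point equation $F_\bx'(\lambda^\ast)=0$, after the change of variables $\mu=(1-2\lambda)/\theta$, becomes exactly $H_{\mu_\bx}(\mu^\ast)=\theta$. The hypothesis $\theta\in H_\bx([x_{\min}-\eta,x_{\max}+\eta]^c)$ therefore produces a unique real saddle $\mu^\ast=\mu^\ast(\bx,\theta)$ with $|\mu^\ast - x_i|\ge\eta$ for every $i$, so that the corresponding $\lambda^\ast$ stays away from the singularities $\{(1-\theta x_i)/2\}_i$ of $F_\bx$.

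\emph{Step 2 (Laplace asymptotics and continuity).} Deforming the Bromwich contour to pass vertically through $\lambda^\ast(\bx)$, the $\eta$-buffer provides a uniform lower bound on $|F_\bx''(\lambda^\ast)|$, so standard steepest-descent yields $n^{-1}\log I_n(\theta,\bx)=F_\bx(\lambda^\ast(\bx))+o_{\theta,\eta}(1)$, with an error uniform over $\bx$ satisfying the hypothesis. For $\lambda$ in a common neighbourhood of both saddles, the integrand $x\mapsto\log(1-2\lambda-\theta x)$ is Lipschitz in $x$ on $\supp(\mu_\bx)\cup\supp(\mu_\by)$ with constant $\le\theta/\eta$, so the defining property of the Dudley metric yields
\begin{equation*}
\sup_\lambda\bigl|F_\bx(\lambda)-F_\by(\lambda)\bigr|\;\le\;\tfrac{\theta}{2\eta}\,d(\mu_\bx,\mu_\by)\;\le\;\tfrac{\theta}{2\eta}\,\delta.
\end{equation*}
Since $\lambda^\ast(\bx)$ is a critical point of $F_\bx$, a second-order Taylor expansion gives $F_\bx(\lambda^\ast(\by))-F_\bx(\lambda^\ast(\bx))=O(|\lambda^\ast(\bx)-\lambda^\ast(\by)|^2)$, and the implicit function theorem applied to the non-degenerate equation $H_{\mu_\bx}(\mu)=\theta$ yields $|\lambda^\ast(\bx)-\lambda^\ast(\by)|\le C_{\theta,\eta}\,\delta$. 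Combining these three estimates bounds $|F_\bx(\lambda^\ast(\bx))-F_\by(\lambda^\ast(\by))|$ by $g_{\theta,\eta}(\delta)$ with $g_{\theta,\eta}(\delta)\to 0$ as $\delta\to 0$, and Step~1 delivers the conclusion.

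\emph{Main obstacle.} The genuine difficulty is the uniformity of the Laplace expansion: the sub-exponential prefactor $C_n$ and the steepest-descent correction must depend on $\bx$ only through quantities controlled by $\theta$ and $\eta$. This is exactly what the hypothesis $\theta\in H_\bx([x_{\min}-\eta,x_{\max}+\eta]^c)$ is designed to secure, by keeping $\lambda^\ast(\bx)$ a fixed positive distance from every singularity of $F_\bx$ and thereby making the curvature at the saddle uniformly bounded below. The secondary technical point is verifying that the Bromwich contour can actually be deformed through $\lambda^\ast$ while avoiding the branch cuts of $\log(1-2\lambda-\theta x)$ for $x\in\supp(\mu_\bx)$, which again follows from the same buffer condition.
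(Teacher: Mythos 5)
The paper does not actually prove this lemma: it is imported verbatim from Guionnet and Maida \cite{guionnet2005fourier} (their Lemma 14), so there is no internal argument to compare against. Your sketch reconstructs the result by precisely the mechanism of that source --- the Bromwich (``Fourier'') representation of the rank-one spherical integral, reduction to a one-dimensional saddle-point integral whose phase $F_\bx$ depends on $\bx$ only through $\mu_\bx$, identification of the saddle equation with $H_{\mu_\bx}(v)=\theta$, and continuity of the saddle value in the bounded-Lipschitz metric --- so the strategy is the right one and matches the provenance of the statement.

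Two points in Step 2 are genuinely incomplete. First, the bound $\sup_\lambda |F_\bx(\lambda)-F_\by(\lambda)|\le \tfrac{\theta}{2\eta}\, d(\mu_\bx,\mu_\by)$ does not follow from the Dudley metric as defined in Section \ref{sec:Defs}: that metric tests only against functions that are simultaneously bounded by $1$ and $1$-Lipschitz. The $\eta$-buffer controls the Lipschitz constant of $x\mapsto \log(1+2\lambda-\theta x)$ near the saddle, but its sup-norm is controlled only if $v^\ast-x_{\min}$ is bounded, i.e.\ only under an a priori bound on the diameter of the supports (compare Lemma \ref{lem:continuity_si2}, which explicitly assumes $\dl\bx\dl_\infty,\dl\by\dl_\infty\le M$). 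You should either add such a bound (harmless in every application in this paper, where the measures lie in $\ball_{R}(\sigma_{\rm sc},\delta)$) or let $g_{\theta,\eta}$ depend on a support bound as well. Second, the uniformity in $n$ and in $\bx$ of the steepest-descent approximation is asserted rather than proved: you correctly identify the $\eta$-buffer as the source of a uniform curvature lower bound at the saddle, but a complete argument still needs the matching global estimate along the deformed contour (e.g.\ that $\mathrm{Re}\,F_\bx$ on the vertical line through $\lambda^\ast$ is maximized at the real axis and decays quadratically away from it, with constants depending only on $\theta,\eta$), which is what actually licenses the ``for sufficiently large $n$'' with an $n$-independent $g_{\theta,\eta}$.
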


\begin{lemma}[Continuity for spherical integral II, \cite{maida2007large}, Proposition 2.1]\label{lem:continuity_si2}
For any $\theta, \kappa, M > 0$, there exists a function $g_{\kappa,\theta,M}: \R_{\ge 0}\rightarrow \R_{\ge 0}$
with $\lim_{z\to 0} g_{\kappa,\theta,M}(z) = 0$, such that the following holds.
For $\bx,\by\in\reals^n$, denote by $\mu'_x, \mu'_y$ the empirical distributions of the $(n-1)$
smallest entries of $\bx, \by$, and $x_1, y_1$ the largest elements of $\bx, \by$. 
If $d(\mu'_x,\mu'_y) \leq n^{-\kappa}$, $\l x_1 - y_1 \l \leq \delta$, and $ \dl \bx\dl_\infty , \dl \by \dl_\infty \le  M$, 
then for sufficiently large $n$
\begin{align}
\Big\l \frac{1}{n} \log I_n(\theta, \bx) - \frac{1}{n} \log I_n(\theta, \by) \Big\l \leq g_{\kappa,\theta,M}(\delta)\, .
\end{align}
\end{lemma}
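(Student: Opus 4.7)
The plan is to reduce Lemma \ref{lem:continuity_si2} to Lemma \ref{lem:continuity_si} (Continuity I) by explicitly separating the contribution of the outlying (largest) entry from that of the bulk. Starting from the representation $(\bU \cdot \diag(\bx) \cdot \bU^\sT)_{11} = \sum_i x_i v_i^2$, where $\bv = \bU \be_1$ is uniformly distributed on $\S^{n-1}$ when $\bU$ is Haar on $\mathcal{O}_n$, and assuming without loss of generality that $x_1$ and $y_1$ are the largest entries of $\bx$ and $\by$, I would condition on the first coordinate $v_1 = t$. This coordinate has density proportional to $(1-t^2)^{(n-3)/2}$ on $(-1,1)$, and conditionally on $v_1=t$ the vector $(v_2,\ldots,v_n)/\sqrt{1-t^2}$ is uniform on $\S^{n-2}$. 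Carrying out the resulting factorization yields
\begin{equation}
I_n(\theta, \bx) \;=\; c_n \int_{-1}^{1} (1-t^2)^{(n-3)/2} \, \exp\!\left(\frac{n\theta x_1 t^2}{2}\right) \, I_{n-1}\!\left(\frac{n\theta(1-t^2)}{n-1}, \, (x_2, \ldots, x_n)\right) dt,
\end{equation}
and an analogous identity for $\by$.

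With this representation in hand, I would control the three factors separately. The exponential top-factor $\exp(n\theta x_1 t^2/2)$ versus $\exp(n\theta y_1 t^2/2)$ differs only through $|x_1 - y_1| \le \delta$, giving an error of order $\theta \delta/2$ on the $(1/n)\log I_n$ scale, uniformly in $t \in [-1,1]$. The bulk spherical integrals $I_{n-1}(\theta'(t), (x_2,\ldots,x_n))$ and $I_{n-1}(\theta'(t), (y_2,\ldots,y_n))$, with $\theta'(t) = n\theta(1-t^2)/(n-1)$, are compared using Lemma \ref{lem:continuity_si}: since $d(\mu'_x, \mu'_y) \le n^{-\kappa}$ and $\|\bx\|_\infty,\|\by\|_\infty \le M$ keep the bulk spectra contained in $[-M,M]$, that lemma yields an error $g_{\theta'(t),\eta}(n^{-\kappa}) \to 0$, provided $\theta'(t)$ stays outside a neighborhood of the bulk spectral support by some margin $\eta > 0$.

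The final step is a Laplace analysis of the $t$-integral. Writing the integrand as $\exp(n\Psi_n(t))$ with $\Psi_n(t) = \tfrac{1}{2}\log(1-t^2) + \tfrac{\theta x_1 t^2}{2} + (1/n)\log I_{n-1}(\theta'(t), (x_2,\ldots,x_n)) + o(1)$, the two estimates above show that $|\Psi_n^{(\bx)}(t) - \Psi_n^{(\by)}(t)|$ is bounded uniformly in $t$ by a quantity that vanishes as $\delta \to 0$ for large $n$. A standard second-order Laplace argument at the unique maximizer $t^*$ of the limit $\Psi$—whose location depends continuously on $(x_1, \mu'_x)$—then transfers this pointwise closeness into closeness of $(1/n)\log I_n(\theta,\bx)$ and $(1/n)\log I_n(\theta,\by)$, producing a modulus of continuity $g_{\kappa,\theta,M}(\delta)$ of the required form.

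The main obstacle will be ensuring uniformity of the bulk estimate in $t$. As $t \to \pm 1$, the parameter $\theta'(t) \to 0$, at which point the hypothesis of Lemma \ref{lem:continuity_si} (requiring the parameter to lie outside a neighborhood of the spectral support) degenerates. One must argue a priori that the Laplace maximizer $t^*$ is bounded away from $\pm 1$, equivalently $\theta'(t^*) \ge \eta$ for some $\eta > 0$ depending only on $\theta$ and $M$, so that the continuity estimate applies throughout the region where the $t$-integral concentrates. This should follow from $\|\bx\|_\infty \le M$ together with standard convex analysis of the large-$n$ rate function $F(\theta,\mu) = \lim_n n^{-1}\log I_n(\theta,\cdot)$, but the bookkeeping must be done carefully to produce a final modulus depending only on $(\kappa, \theta, M)$.
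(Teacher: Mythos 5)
The paper does not actually prove this lemma: it is imported verbatim from M\"aida \cite{maida2007large}, Proposition 2.1, so there is no in-paper argument to compare against. Your factorization identity obtained by conditioning on the first coordinate of the Haar vector is correct --- it is the same device used in \cite{guionnet2005fourier} to derive the limit $J(\mu,x_{\max},\theta)$ of Lemma \ref{lem:ldp_si} --- and the treatment of the top factor via $|x_1-y_1|\le\delta$ is fine. Note also that if your uniform-in-$t$ comparison of the integrands did hold, no Laplace analysis would be needed: a pointwise bound $e^{n\Psi_n^{(\bx)}(t)}\le e^{n\eps}\,e^{n\Psi_n^{(\by)}(t)}$ passes directly to $\frac1n\log$ of the integrals.

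The genuine gap is in the bulk comparison, and it sits at the opposite end of the $t$-range from where you place it. As $t\to\pm1$ the effective parameter $\theta'(t)\to 0$, which is the \emph{benign} regime for Lemma \ref{lem:continuity_si}: small positive values of the parameter always lie in the image of the empirical Stieltjes transform restricted far from the support (that image is $(0,H_{\bx'}(x_{\max}'+\eta))$). The hypothesis of Lemma \ref{lem:continuity_si} can instead fail for $t$ near $0$, where $\theta'(t)\approx\theta$ is largest: the condition there amounts to $\theta'(t)<H_{\mu'_x}(x_{\max}'+\eta)\wedge H_{\mu'_y}(y_{\max}'+\eta)$, and the hypotheses of the present lemma do not prevent the $(n-1)$ bulk entries from containing their own unmatched near-outliers. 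Two bulks can be $n^{-\kappa}$-close in Dudley distance while one has a single entry near $M$ and the other does not, in which case $H_{\mu'_x}(x_{\max}'+\eta)$ is of order $1/(2M)$ and the condition fails for moderately large $\theta$ --- precisely the regime ($\theta>1$, eigenvalue detachment) that this paper needs. You also cannot excise a neighborhood of $t=0$ by localizing the Laplace maximizer: when $x_1$ is not separated from the bulk, $t^*$ \emph{is} at $0$. Closing this requires either recursing the outlier-splitting inside the bulk or, as in \cite{maida2007large}, a direct comparison in the Gaussian representation of the uniform vector rather than a reduction to the no-outlier continuity lemma. Finally, even where Lemma \ref{lem:continuity_si} does apply, you invoke $g_{\theta'(t),\eta}(n^{-\kappa})$ uniformly over $t$, a uniformity in the parameter that the lemma as stated does not supply.
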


\begin{lemma}[Limiting distribution of spherical integral \cite{guionnet2005fourier}, Theorem 6]\label{lem:ldp_si}
Let $\theta > 0$, $\{ \bx(n) \}_{n \in \N_+}$ be a sequence of vectors with empirical measure $L_n$ converging weakly to a compactly supported measure $\mu$, 
and limiting largest element $x_{\max} \ge \sup\{x: \; x\in{\rm supp}(\mu)\}$ and limiting smallest element $x_{\min} \le \inf\{x: \; x\in{\rm supp}(\mu)\} < 0$. Then the function
\begin{align}
J(\mu, x_{\max}, \theta) = \lim_{n\rightarrow \infty} \frac{1}{n} \log I_n(\theta, \bx(n))
\end{align}
is finite and well defined (which does not depend on $x_{\min}$).  

Moreover, letting $x \ge \sup\{x: \; x \in \supp(\mu) \}$, we have
\begin{equation}
\begin{aligned}
J(\mu, x, \theta) =& \frac{\theta \cdot v(x, \theta)}{2} - \frac{1}{2} \int_{\R} \log(1 + \theta \cdot v(x, \theta) - \theta \cdot \lambda) \mu (\d \lambda),
\end{aligned}
\end{equation}
where
\begin{equation}
v(x, \theta) = \begin{cases}
R_\mu(\theta), ~~ \text{ if } H_\mu(x) \ge \theta,\\
x - 1/\theta, ~~ \text{ otherwise }. 
\end{cases}
\end{equation}
See Section \ref{sec:Defs} for the definitions of Stieltjes transform $H_\mu(x)$ and R-transform $R_\mu(x)$.
\end{lemma}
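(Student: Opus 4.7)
The plan is to reduce the Haar integral defining $I_n(\theta, \bx)$ to a one–dimensional contour integral and then analyze it by Laplace's method, with the two branches in the definition of $v(x,\theta)$ corresponding to a BBP-type transition of the saddle. First, since the first row of a Haar-distributed $\bU \in \mathcal O_n$ is uniformly distributed on $\S^{n-1}$, writing $\bu = \bU^{\sT}\be_1$ gives
\begin{align*}
I_n(\theta,\bx) \;=\; \E_{\bu \sim \mathrm{Unif}(\S^{n-1})}\!\Big[\exp\!\Big\{\frac{n\theta}{2}\sum_{i=1}^n x_i u_i^2\Big\}\Big].
\end{align*}
I would then lift the sphere constraint, either via the Gaussian identification $\bu \stackrel{d}{=} \bg/\|\bg\|$ with $\bg \sim \normal(0,\id_n)$ or via the Fourier inversion $\delta(\|\bu\|^2 - 1) \propto \int_{c-i\infty}^{c+i\infty} e^{\alpha(\|\bu\|^2-1)}\, d\alpha$, swap the order of integration, and evaluate the resulting product of one-dimensional Gaussians. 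After the change of variables $\alpha = -nt/2$ this yields
\begin{align*}
I_n(\theta,\bx) \;=\; C_n \int_{\Gamma} \exp\!\Big\{\frac{n}{2}\Big[\,t - \int\log(t - \theta\lambda)\, L_n(d\lambda)\Big]\Big\}\, dt,
\end{align*}
where $\Gamma$ is a vertical contour lying in $\{\Re t > \theta\cdot x_{\max}\}$, $L_n$ is the empirical measure of the entries of $\bx(n)$, and $C_n$ is an explicit normalization; tracking the factors of $\Gamma(n/2)$ and $(2\pi/n)^{n/2}$ via Stirling shows $(1/n)\log C_n \to -1/2$.

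Second, I would apply Laplace's method to this one-dimensional integral. Writing the exponent as $(n/2)\Psi_n(t)$ with $\Psi_n(t) = t - \int\log(t-\theta\lambda)\, L_n(d\lambda)$, the saddle equation $\Psi_n'(t)=0$ reads $\int (t-\theta\lambda)^{-1} L_n(d\lambda) = 1$, equivalently $H_{L_n}(t/\theta) = \theta$. Passing to the $n\to\infty$ limit and using Lemmas \ref{lem:continuity_si} and \ref{lem:continuity_si2} to transfer from $L_n$ to $\mu$ along the bounded sequence $\bx(n)$, the saddle equation becomes $H_\mu(s^{\ast}) = \theta$ with $s^{\ast} = t^{\ast}/\theta$, so that $s^{\ast} = G_\mu(\theta) = R_\mu(\theta) + 1/\theta$. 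Setting $v := s^{\ast} - 1/\theta$ gives $v = R_\mu(\theta)$ and $t^{\ast} = \theta v + 1$. Substituting back into $(n/2)\Psi_n(t^{\ast})$ produces $(n/2)[\theta v + 1 - \int \log(1 + \theta v - \theta\lambda)\,\mu(d\lambda)]$, and combining with the $-1/2$ contribution from $(1/n)\log C_n$ reproduces the claimed formula $J(\mu,x,\theta) = \theta v/2 - (1/2)\int\log(1+\theta v - \theta\lambda)\,\mu(d\lambda)$.

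The main obstacle is the BBP-like phase transition that accounts for the two-case definition of $v(x,\theta)$. The contour $\Gamma$ is constrained to lie in $\{\Re t > \theta\cdot x_{\max}\}$, so the unconstrained saddle $t^{\ast} = \theta R_\mu(\theta) + 1$ is admissible only when $s^{\ast} = G_\mu(\theta) \geq x$ in the limit, i.e., when $H_\mu(x) \geq \theta$. When $H_\mu(x) < \theta$ the candidate saddle lies strictly to the left of the boundary $\theta x$, the contour integral is dominated by its endpoint, and one must pin $t^{\ast} = \theta x$, which translates to $v = x - 1/\theta$; heuristically this is exactly the regime in which the rank-one perturbation creates an outlier eigenvalue at $x$ detached from the bulk $\mu$. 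To carry this out rigorously I would obtain matching upper and lower bounds: an upper bound by placing $\Gamma$ just past $\theta\cdot x_{\max}$ and controlling the integrand using boundedness of $\|\bx(n)\|_\infty$, and a matching lower bound by a localized quadratic expansion of $\Psi_n$ around the pinned saddle, using Lemma \ref{lem:continuity_si2} to pass jointly from the empirical measure on the $n-1$ smallest entries and the top entry $x_1$ to $\mu$ and $x$ in the limit. Finally, independence of $J$ from $x_{\min}$ is automatic since $x_{\min}$ enters the limiting exponent only through $\mu$.
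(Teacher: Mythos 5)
The paper does not actually prove this lemma: it is imported verbatim from \cite{guionnet2005fourier} (Theorem 6), and the only internal work the authors do with it is the elementary specialization to $\mu=\sigma_{\rm sc}$ in the next lemma. So there is no in-paper proof to compare against; what you have written is a reconstruction of the cited argument, and it follows the same strategy Guionnet and Ma\"{\i}da use: identify $(\bU\diag(\bx)\bU^{\sT})_{11}=\sum_i x_i u_i^2$ with $\bu$ uniform on $\S^{n-1}$, pass to a Gaussian or Lagrange-multiplier (contour) representation, and run a Laplace analysis in which the two branches of $v(x,\theta)$ come from whether the saddle is admissible or gets pinned by the singularity created by the top entry. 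Your algebra is consistent with the stated formula: the saddle equation $H_{L_n}(t/\theta)=\theta$ gives $t^{\ast}=\theta R_\mu(\theta)+1$ in the unpinned case; since $H_\mu$ is decreasing to the right of $\supp(\mu)$, admissibility $G_\mu(\theta)\ge x$ is equivalent to $H_\mu(x)\ge\theta$; and substituting the pinned value $t^{\ast}=\theta x$ yields $v=x-1/\theta$, with the normalization contributing the required $-1/2$ so that $J=\tfrac12\Psi(t^{\ast})-\tfrac12$ matches the claimed expression. The independence from $x_{\min}$ is indeed automatic because for $\theta>0$ only the right edge of the spectrum interacts with the contour.

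One caveat on the hard step, which your sketch defers. In the pinned regime the dominant contribution is not a critical point admitting a ``localized quadratic expansion'': the exponent is monotone up to the branch point at $t=\theta x_1(n)$, and the single outlier contributes a term $\tfrac1n\log(t-\theta x_1(n))$ that diverges there. The upper bound requires showing this endpoint singularity costs only $o(1)$ on the exponential scale, and the matching lower bound is obtained (in the Gaussian picture) by restricting the weight $u_1^2$ on the top coordinate to a macroscopic window and the remaining coordinates to a neighborhood of their typical behavior; this separation of the top coordinate from the bulk is exactly where the real work in \cite{guionnet2005fourier} and \cite{maida2007large} lies. As a sketch of the cited proof your proposal is structurally sound, but it should be presented as such rather than as a self-contained proof.
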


Setting $\mu = \sigma_{\rm sc}$ in the above lemma, with some simple calculations, we get the following expression for $J(\sigma_{\rm sc}, x, \theta)$. 
\begin{lemma}
Since $\sup\{ x: \; x \in \supp(\sigma_{\rm sc}) \} = 2$, $J(\sigma_{\rm sc}, x, \theta)$ is defined as $x \ge 2$. We have 
\begin{equation}\label{eqn:JDef}
\begin{aligned}
J(\sigma_{\rm sc}, x, \theta) = \begin{cases}
\theta^2 / 4, &\text{ if } 0 < \theta \le 1, x \in [2, \rho(\theta) ],\\
1/2 \cdot [\theta x -1 - \log(\theta) - \Phi_\star(x) ], & \text{ if } \theta \ge 1, x \ge 2, \text{ or } 0 < \theta \le 1,  x > \rho(\theta).\\
\end{cases}
\end{aligned}
\end{equation}
See Eq. (\ref{eq:PhiDef}) for the definition of $\Phi_\star(x)$.
\end{lemma}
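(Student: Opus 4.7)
The plan is to substitute $\mu = \sigma_{\rm sc}$ directly into the general formula of Lemma \ref{lem:ldp_si} and simplify. The two cases in the statement arise from the two cases in the definition of $v(x,\theta)$, once one works out when the condition $H_\mu(x) \ge \theta$ holds for the semicircular law. For $\mu = \sigma_{\rm sc}$, the Stieltjes transform $H_{\sigma_{\rm sc}}(x) = (x - \sqrt{x^2-4})/2$ is strictly decreasing on $[2,\infty)$, equals $1$ at $x = 2$, and tends to $0$ as $x \to \infty$; solving $H_{\sigma_{\rm sc}}(x) = \theta$ gives $x = \rho(\theta) := \theta + 1/\theta$. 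Consequently, for $x \ge 2$ one has $H_{\sigma_{\rm sc}}(x) \ge \theta$ if and only if $0 < \theta \le 1$ and $x \in [2,\rho(\theta)]$. In this regime $v(x,\theta) = R_{\sigma_{\rm sc}}(\theta) = \theta$; in the complementary regime $v(x,\theta) = x - 1/\theta$.

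The next ingredient is the identity
\begin{equation*}
\int_\R \log(z - \lambda)\, \sigma_{\rm sc}(\d \lambda) = \Phi_\star(z), \qquad z \ge 2,
\end{equation*}
with $\Phi_\star$ as in Eq.~(\ref{eq:PhiDef}). I would obtain this by differentiating in $z$ (the derivative equals the Stieltjes transform $(z - \sqrt{z^2-4})/2$) and integrating back, checking consistency at the reference value $z = 2$; equivalently, it is the logarithmic potential of the semicircle. This is the only analytic input; everything else is algebra.

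With these pieces in hand, the two cases of the lemma follow by direct substitution. When $v = x - 1/\theta$, one has $1 + \theta v - \theta \lambda = \theta (x - \lambda)$, so $\int \log(1 + \theta v - \theta \lambda)\, \sigma_{\rm sc}(\d \lambda) = \log \theta + \Phi_\star(x)$, and the defining formula $J = \tfrac12 \theta v - \tfrac12 \int \log(\cdots)\, \sigma_{\rm sc}(\d\lambda)$ yields $J = \tfrac12 [\theta x - 1 - \log \theta - \Phi_\star(x)]$. When $v = \theta$, one has $1 + \theta v - \theta \lambda = \theta(\rho(\theta) - \lambda)$, so $J = \theta^2/2 - \tfrac12 [\log \theta + \Phi_\star(\rho(\theta))]$, and it remains to establish the closed-form identity $\Phi_\star(\rho(\theta)) = \theta^2/2 - \log \theta$ for $\theta \in (0,1]$. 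This follows by elementary algebra from $\rho(\theta)^2 - 4 = (1/\theta - \theta)^2$, whence $\sqrt{\rho^2 - 4} = 1/\theta - \theta$ (taking the nonnegative root uses $\theta \le 1$) and $\rho + \sqrt{\rho^2 - 4} = 2/\theta$; substituting into the $|x|>2$ branch of $\Phi_\star$ and simplifying leaves $\theta^2/2 - \log \theta$, and plugging back gives $J = \theta^2/4$. I do not anticipate any substantive obstacle; the one point requiring care is the case split for $v$ and the corresponding choice of sign of $\sqrt{\rho^2 - 4}$, both governed by $\theta \le 1$, which also ensures $\rho(\theta) \ge 2$ so that the $|x|>2$ branch of $\Phi_\star$ is the relevant one. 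The boundary case $\theta = 1$, $x = 2$ is covered by either case and gives $J = 1/4$ in both, confirming internal consistency.
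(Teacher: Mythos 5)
Your proposal is correct and is exactly the computation the paper intends: the paper gives no written proof of this lemma, stating only that it follows by setting $\mu = \sigma_{\rm sc}$ in Lemma \ref{lem:ldp_si} ``with some simple calculations,'' and your substitution, the identification of the case boundary via $H_{\sigma_{\rm sc}}(x)=\theta \Leftrightarrow x=\rho(\theta)$ with $\theta\le 1$, and the identity $\Phi_\star(\rho(\theta))=\theta^2/2-\log\theta$ are precisely those calculations. I verified the algebra in both branches and the consistency check at $\theta=1$, $x=2$; no gaps.
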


\subsection{Proof of Lemma \ref{lem:jointdensity}}
\label{sec:ProofJointDensity}

We rewrite the objective function as 
\begin{equation}
f(\bsigma) = \<\bY, \bsigma^{\otimes k} \>= \lambda \cdot \< \bu, \bsigma\>^k + h(\bsigma),
\end{equation}
where 
\begin{equation}
h(\bsigma) = \frac{1}{\sqrt{2n}} \< \bW, \bsigma^{\otimes k} \> =  \frac{1}{\sqrt{2n}} \sum_{i_1, \ldots, i_k = 1}^n  G_{i_1 \cdots i_k} \sigma_{i_1} \cdots \sigma_{i_k}. 
\end{equation}

The Euclidean gradients and Hessians of the $f$ gives
\begin{align}
\nabla f(\bsigma) =& k\lambda  \, \< \bu, \bsigma \>^{k-1} \cdot  \bu + \nabla h(\bsigma),\\
\nabla^2 f(\bsigma) = & k(k-1)\lambda \cdot  \< \bu, \bsigma \>^{k-2} \cdot \bu \bu^\sT + \nabla^2 h(\bsigma),
\end{align}
where
\begin{equation}
\begin{aligned}
\nabla h(\bsigma)_i =& \frac{k}{\sqrt{2n}} \cdot \sum_{i_1,\ldots, i_{k-1}=1}^nW_{i i_1 \cdots i_{k-1}} \sigma_{i_1}\cdots \sigma_{i_{k-1}} \\
=& \frac{k}{\sqrt{2n}}\cdot \frac{1}{k!} \cdot \sum_{\pi\in \cP_n} \sum_{i_1,\ldots, i_{k-1} = 1}^n (G^{\pi})_{i i_1 \cdots i_{k-1} } \sigma_{i_1} \cdots \sigma_{i_{k-1}}, 
\end{aligned}
\end{equation}
and 
\begin{equation}
\begin{aligned}
\nabla^2 h(\bsigma)_{ij} = & \frac{k(k-1)}{\sqrt{2n}} \cdot \sum_{i_1,\ldots, i_{k-2} = 1}^n W_{i j i_1 \cdots i_{k-2}} \sigma_{i_1} \cdots \sigma_{i_{k-2}} \\
=& \frac{k(k-1)}{\sqrt{2n}} \cdot \frac{1}{k!} \cdot \sum_{\pi \in \cP_n }\sum_{i_1,\ldots, i_{k-2} = 1}^n (G^{\pi})_{i j i_1\cdots i_{k-2}} \sigma_{i_1} \cdots \sigma_{i_{k-2}}. \\
\end{aligned}
\end{equation}

We will denote by $T_\sigma \S^{n-1}$ the tangent space of the unit sphere $\S^{n-1}$ at the point $\bsigma$, which
we will identify isometrically with the Euclidean subspace of $\R^n$ orthogonal to $\bsigma$.
The Riemannian gradients and Hessians of $f$ on the manifold $\S^{n-1}$, restricted on the tangent space 
are given by
\begin{align}
\tgrad f(\bsigma) = & \Proj_\bsigma^\bperp \nabla f(\bsigma) =  k\lambda \< \bu, \bsigma \>^{k-1} \Proj_\bsigma^\bperp \bu + \Proj_\bsigma^\bperp \nabla h(\bsigma),\\
\thess f(\bsigma) = & \Proj_\bsigma^\bperp \nabla^2 f(\bsigma) \Proj_\bsigma^\bperp  - \< \bsigma, \nabla f(\bsigma) \> \cdot \Proj_\bsigma^\bperp \\
=& k(k-1)\lambda \< \bu, \bsigma \>^{k-2}  \cdot (\Proj_\bsigma^\bperp \bu) (\Proj_\bsigma^\bperp \bu)^\sT - k \lambda \< \bu, \bsigma\>^k \cdot \Proj_\bsigma^\bperp \\
&+ \Proj_\bsigma^\bperp \nabla^2 h(\bsigma) \Proj_\bsigma^\bperp  - \< \bsigma, \nabla h(\bsigma) \> \cdot \Proj_\bsigma^\bperp.
\end{align}

Taking $\bsigma = \be_n$, and $\bu = m \be_n + \sqrt{1-m^2} \be_1$, we have (and identifying $T_{\bsigma}\S^{n-1}$ with $\R^{n-1}$)
\begin{align}
f(\bsigma) \eqnd & \lambda m^k +  \frac{1}{\sqrt{2n}} Z, \quad Z \sim \cN(0,1),\\
\Proj_\bsigma^\bperp \nabla f(\bsigma) \big\l_{T_\sigma \S^{n-1}} \eqnd & k \lambda m^{k-1} \sqrt{1-m^2} \be_1 + \sqrt \frac{k}{2n} \bg_{n-1},
\quad \bg_{n-1} \sim \cN(\bzero, \id_{n-1}),\\
\Proj_\bsigma^\bperp \nabla^2 f(\bsigma) \Proj_\bsigma^\bperp  \big\l_{T_\sigma \S^{n-1}} \eqnd & k(k-1) \lambda m^{k-2} (1-m^2) \be_1 \be_1^\sT +  
\sqrt \frac{k(k-1)(n-1)}{2n} \bW_{n-1}, \quad \bW_{n-1} \sim \text{GOE}(n-1).
\end{align}
Thus, the Riemannian Hessian  restricted to the tangent space is distributed as
\begin{align}
\thess f(\bsigma) \big\l_{T_\sigma \S^{n-1}} \eqnd & k(k-1) \lambda m^{k-2} (1-m^2) \be_1 \be_1^\sT +  \sqrt \frac{k(k-1)(n-1)}{2n} \bW_{n-1} -  k(\lambda m^k + \frac{1}{\sqrt{2n}} Z)\id_{n-1}.
\end{align}
Further note that $\tgrad f(\bsigma)$ and $\thess f(\bsigma)$ are independent. 

Plug in these quantities into Eq. (\ref{eqn:ctk}) and using rotational invariance gives Eq. (\ref{eqn:ctk2}). Summing Eq. (\ref{eqn:ctk}) over $k$ gives Eq. (\ref{eqn:ctk1}).

\subsection{Proof of Lemma \ref{lem:exact}}
\label{sec:ProofExact}

In Eq. (\ref{eqn:ctk2}), the determinant of Hessian is given by
\begin{align}
\l \det(\bH_n) \l = (k(k-1)(n-1)/2n)^{(n-1)/2} \det(\theta_n(m) \be_1 \be_1^\sT + \bW_{n-1} - t_n(f) \id_{n-1}). 
\end{align}
We denote the density of $f$ to be $p_f(x)$, we have  
\begin{align}
p_f(x) = \sqrt{n / \pi} \cdot \exp\{-n (x - \lambda m^{k})^2 \}.
\end{align}
The inner expectation yields
\begin{equation}
\begin{aligned}
 &\E\{\, \l \det(\bH_n) \l \cdot \ones\{ \bH_n \preceq \bzero \} \cdot \ones \{ f \in E\} \}\\
=& (k(k-1)(n-1)/(2n))^{(n-1)/2} \int_{E}  \E\{ \l\det(\bH_n)\l \cdot \ones\{H_n \preceq 0\} \}\, p_f(x) \d x \\
=& (k(k-1)(n-1)/(2n))^{(n-1)/2}\cdot  (n / \pi)^{1/2}\int_{E}  \E\{ \l \det(\bH_n)\l \cdot \ones\{ \bH_n \preceq 0 \}\} \exp\{-n (x - \lambda m^{k})^2 \}\,  \d x\, .
\end{aligned}
\end{equation}

We also have 
\begin{align}
V_n(m) =& 2\pi^{(n-1)/2}/\Gamma((n-1)/2) \cdot (1-m^2)^{(n-2)/2}\, , \\
\vphi_\bsigma(\bzero) =& (n/(\pi k))^{(n-1)/2} \cdot \exp\{ -n k \lambda^2 m^{2k-2} (1-m^2)\}\, . 
\end{align}

Plug these into Eq. (\ref{eqn:ctk2}), we have the form of Eq. (\ref{eqn:lm_st}), with pre-factor
\begin{equation}
\begin{aligned}
\PC_n =& (k(k-1)(n-1)/(2n))^{(n-1)/2} (n / \pi)^{1/2} \times 2\pi^{(n-1)/2}/\Gamma((n-1)/2) \\
& \times (n/(\pi k))^{(n-1)/2} \times (1/(k-1)e)^{n/2}\\
=&2 ((n-1)/(2e))^{(n-1)/2}  / \Gamma((n-1)/2) \times (n/(k-1)e \pi)^{1/2}\, .
\end{aligned}
\end{equation}

Expand the $\Gamma$ function in $\PC_n$ using Stirling's formula, it is easy to see that $\PC_n$ is exponentially trivial. 

Eq. (\ref{eqn:cp_st}) follows essentially by the same calculation.

\subsection{Proof of Theorem \ref{thm:critical_point}}
\label{sec:ProofCritical}

Throughout the proof, we will use the notations
\begin{equation}
\begin{aligned}
\He_n = &~ \theta \cdot \be_1 \be_1^\sT + \bW_{n} - t \cdot \id_{n},\\
\bX_n = & ~ \theta \cdot \be_1 \be_1^\sT + \bW_{n} ,\\
\bH_n = &~\theta_n(m) \cdot \be_1 \be_1^\sT + \bW_{n-1} - t_n(x) \cdot \id_{n-1},\\
\theta(m) =&~ \sqrt{2k(k-1)} \cdot \lambda m^{k-2} (1-m^2),\\
t(x) =&~ \sqrt{2k/(k-1)} \cdot x,\\
\theta_n(m) =&~ \sqrt{2k(k-1)n/(n-1)} \cdot \lambda m^{k-2} (1-m^2),\\
t_n(x) =&~ \sqrt{2kn/((k-1)(n-1))} \cdot x.
\end{aligned}
\end{equation}

In order to prove Theorem \ref{thm:critical_point}, we will establish the following key Proposition,
whose proof is deferred to Sections \ref{sec:PropoCritA}, \ref{sec:PropoCritB}, \ref{sec:PropoCritC}.
\begin{proposition}\label{prop:critical_point}
The following statements hold
\begin{enumerate}[label = (\alph*)]
\item \emph{Exponential tightness:} 
\begin{align}\label{eqn:exponential_tightness}
\lim_{z \rightarrow \infty}\limsup_{n\rightarrow \infty}\frac{1}{n}\log\E\{\Crt_{n,\star}([-1,1], ( -\infty, -z] \cup [z, \infty))\}  = -\infty.
\end{align}
\item  \emph{Upper bound.} For any fixed large $U_0>0$ and $T_0>0$, let  $\cUb_0 \subset [- U_0, U_0]$ and $\cTb_0 \subset [ -T_0, T_0]$ be
  two compact sets, and define $\cEb_0 \eqndef \cUb_0  \times  \cTb_0$. Then we have (for $\Phi_{\star}$ defined as per Eq.~(\ref{eq:PhiDef}))
\begin{align}\label{eqn:critical_point_ub}
\limsup_{n\rightarrow \infty} \sup_{(\theta, t) \in \cEb_0} \frac{1}{n}\log \E\{ \l \det(\He_n) \l\} \leq \sup_{ t \in \cTb_0}  \Phi_\star(t).
\end{align}
\item \emph{Lower bound.} For any fixed $\delta > 0$, $\theta_0$ and $t_0$, define $\cUo_0^\delta = (\theta_0 - \delta, \theta_0 + \delta)$, $\cTo_0^\delta = (t_0 - \delta, t_0 + \delta)$, and  $\cEo_0^\delta \eqndef \cUo_0^\delta \times \cTo_0^\delta$. Then we have
\begin{align}
\liminf_{n\rightarrow \infty}  \frac{1}{n}\log \int_{(\theta, t) \in \cEo_0^\delta}  \E\{\l \det(\He_n) \l \} \d \theta \d t \geq  \Phi_\star(t_0).
\end{align}
\end{enumerate}
\end{proposition}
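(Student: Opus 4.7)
The three parts of the proposition all hinge on computing the exponential rate of $\E[|\det(\He_n)|]$, where $\He_n = \theta\be_1\be_1^\sT + \bW_n - t\,\id_n$. My plan is to use the joint eigenvalue density of the spiked GOE (the unnumbered lemma just above Lemma~\ref{lem:LDP_spiked}) to write
\begin{equation*}
\E[|\det(\He_n)|] \;=\; \frac{1}{Z_n^\theta}\int \prod_i |x_i - t|\,\prod_{i<j}|x_i - x_j|\, I_n(\theta, x_1^n)\, e^{-n\sum x_i^2/4}\, dx_1\cdots dx_n,
\end{equation*}
and exploit the fact that under the spiked GOE the empirical spectral distribution concentrates on $\sigma_{\rm sc}$, for which $\int \log|x-t|\,\sigma_{\rm sc}(dx) = \Phi_\star(t)$. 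The key heuristic is that the spherical integral factors $I_n(\theta, x_1^n)$ appearing in the numerator and in $Z_n^\theta$ both produce, by Lemma~\ref{lem:ldp_si}, the same $J(\sigma_{\rm sc}, \cdot, \theta)$ correction to leading exponential order, which \emph{cancels} and leaves the $\theta$-independent rate $\Phi_\star(t)$ predicted by the proposition.

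For part (a), I would combine the integral formula of Lemma~\ref{lem:exact} with the crude bound $|\det(\bH_n)| \le (\|\bW_{n-1}\|_{\rm op} + |\theta_n(m)| + |t_n(x)|)^{n-1}$. Standard Gaussian tail bounds on the GOE operator norm (which concentrates at $2$) then give $\tfrac{1}{n}\log\E[|\det(\bH_n)|] = O(\log(1+|x|))$, a logarithmic growth in $x$ that is dominated by the factor $\exp\{-n(x-\lambda m^k)^2\}$ in Eq.~(\ref{eqn:cp_st}). Integrating this decay over $|x|\ge z$ and over $m\in[-1,1]$ yields the exponential tightness (\ref{eqn:exponential_tightness}).

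For parts (b) and (c), I would make the cancellation rigorous via a typical/atypical split. Define the typical event $A_n(\delta) = \{d(\mu_{n-1}^x, \sigma_{\rm sc}) \le \delta,\ |\lambda_1(\bX_n) - \max(2, \theta+1/\theta)| \le \delta\}$ where $\bX_n = \theta\be_1\be_1^\sT + \bW_n$. On $A_n(\delta)$, the continuity Lemmas~\ref{lem:continuity_si} and~\ref{lem:continuity_si2} give uniform control of $\tfrac{1}{n}\log I_n(\theta, x_1^n)$ over compact sets in $(\theta,t)$, and $\tfrac{1}{n}\sum \log|x_i - t|$ is uniformly close to $\Phi_\star(t)$; combined with the analogous expansion of $Z_n^\theta$ this produces the bound $\exp\{n\Phi_\star(t) + o_\delta(n)\}$ uniformly in $(\theta,t)\in\cEb_0$. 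The complement $A_n(\delta)^c$ is controlled by Lemma~\ref{lem:LDP_spiked} for the top eigenvalue together with the classical LDP for the bulk empirical measure of the GOE, which give $\P(A_n(\delta)^c) \le e^{-n c(\delta)}$; a Cauchy--Schwarz bound against a crude second-moment estimate $\E[|\det(\He_n)|^2] \le e^{Cn}$ then shows the atypical contribution is subdominant. Letting $\delta\downarrow 0$ gives (b). Part (c) follows by a matched lower bound on the same typical event, together with the elementary observation that the $d\theta\,dt$ integration over $\cEo_0^\delta$ contributes only a polynomial factor $\sim \delta^2$ that does not affect the exponential rate.

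The main technical obstacle is obtaining the \emph{uniformity} in $(\theta,t)$ required by part (b): since $\sup_{(\theta,t)\in\cEb_0}$ sits inside the $\limsup$, the cancellation of the $J$-corrections must hold uniformly as $\theta$ crosses the BBP transition value $\theta = 1$, at which the outlier eigenvalue $\max(2, \theta+1/\theta)$ emerges from the bulk and the formula (\ref{eqn:JDef}) for $J(\sigma_{\rm sc}, \cdot, \theta)$ changes its analytic form. This is precisely why one must invoke both continuity lemmas: Lemma~\ref{lem:continuity_si2} is essential because it treats the top eigenvalue separately from the bulk, whereas Lemma~\ref{lem:continuity_si} only handles situations where $\theta$ stays outside the image of the Stieltjes transform of the bulk measure.
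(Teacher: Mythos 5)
Your part (a) is essentially the paper's argument and is fine. The trouble starts with how you handle the spherical integral and, more seriously, with part (c).

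For part (b), you do not need to estimate $I_n(\theta,x_1^n)$ and $Z_n^\theta$ separately and hope the $J$-corrections cancel: by the very definition $Z_n^\theta = Z_n^0\int I_n(\theta,x_1^n)\,\d\P_n^0$, once you bound $\exp\{n\Phi(L_n,t)\}$ by $\exp\{n\sup_{\mu\in\ball(\sigma_{\rm sc},\delta)}\Phi(\mu,t)\}$ on the typical event, the remaining factor $\frac{Z_n^0}{Z_n^\theta}\int I_n\,\d\P_n^0$ equals $1$ \emph{exactly}, so the entire upper bound reduces to upper semicontinuity of $\mu\mapsto\Phi(\mu,t)$ (via $\Phi=\inf_\eta\Phi_\eta$). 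Your worry about uniformity across the BBP transition $\theta=1$ is therefore moot for the upper bound. Two further points: on the typical event $\frac1n\sum\log|x_i-t|$ is \emph{not} "uniformly close" to $\Phi_\star(t)$ (the log singularity at $x=t$ kills lower semicontinuity; only the one-sided bound survives), and your Cauchy--Schwarz control of the atypical event with $\P(A_n(\delta)^c)\le e^{-nc(\delta)}$ is not enough, since the constant in $\E[|\det\He_n|^2]\le e^{Cn}$ need not be beaten by $c(\delta)$. What saves the argument (and what the paper's Lemma~\ref{lem:exp_vanishing} uses) is that the bulk empirical measure of the GOE satisfies an LDP at speed $n^2$, so that $\frac1n\log\P(L_n\notin\ball(\sigma_{\rm sc},\delta))\to-\infty$; note also that the top-eigenvalue condition you include in $A_n(\delta)$ only deviates at speed $n$ and should not be part of the event you complement.

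The genuine gap is part (c). Your claim that the $\d\theta\,\d t$ integration "contributes only a polynomial factor" and that a "matched lower bound on the same typical event" suffices is where the argument fails when $t_0\in(-2,2)$: precisely because $\mu\mapsto\Phi(\mu,t)$ is only upper semicontinuous for $t$ inside the support of $\sigma_{\rm sc}$, restricting to $\{L_n\in\ball(\sigma_{\rm sc},\delta)\}$ gives no lower bound on $\exp\{n\Phi(L_n,t)\}$ --- eigenvalues arbitrarily close to $t$ make the determinant collapse. The integration over $t$ is not cosmetic; the paper exploits it by absorbing $t$ as an extra coordinate $x_0$ into the Vandermonde, so that $\int_{\cTo_0^\delta}\prod_i|t-x_i|\prod_{i<j}|x_i-x_j|\,\d t\,\d x$ becomes a GOE$(n+1)$-type integral, and the lower bound reduces to the (exponentially trivial) probability that one eigenvalue of a GOE$(n+1)$ matrix lands in $\cTo_0^\delta$, together with Selberg's formula for $Z_{n+1}^0/Z_n^0$. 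Moreover, for the lower bound the spherical integral does \emph{not} cancel by normalization: one must lower-bound $\inf I_n$ over the typical set against $\int I_n\,\d\P_n^0$, which is exactly where Lemmas~\ref{lem:continuity_si}, \ref{lem:continuity_si2}, \ref{lem:ldp_si} and the estimate of \cite[Prop.~3.1]{maida2007large} on $Z_n^\theta/Z_n^0$ enter, with a case split on $\theta_0\gtrless1$ and a convexity argument (Lemma~\ref{lem:cvx_uniform}) to restore uniformity in $\theta$. None of this is present in your sketch, so part (c) as proposed does not go through.
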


Using this proposition, we can prove Theorem \ref{thm:critical_point}.

\begin{proof}[Proof of Theorem \ref{thm:critical_point}]
Because of the exponential tightness property, we only need to consider the case when the set $E$ is bounded.  
We will prove first the upper bound of Eq.~(\ref{eq:ThmCrUB}), and then the lower bound, cf. Eq.~(\ref{eq:ThmCrLB}).

\noindent {\bf Step 1. Upper bound.} 

First, letting $E_0 = (x_0 - \delta_0, x_0 + \delta_0)$, we claim that
\begin{equation}\label{eqn:Thm1UpperFirst}
\begin{aligned}
\lim_{\delta_0 \rightarrow 0_+}\limsup_{n\rightarrow \infty}\frac{1}{n}\log \E\{\Crt_{n, \star}(M,E_0) \} \leq \sup_{m \in \Mb} S_\star(m, x_0). 
\end{aligned}
\end{equation}

Assuming this claim holds, to prove Eq. (\ref{eq:ThmCrUB}), we consider a general compactly supported set $E$. Fix an $\eps > 0$, for each $x \in E$, there exists a radius $\delta_x$ such that 
\begin{equation}
\limsup_{n\rightarrow \infty}\frac{1}{n}\log \E\{\Crt_{n, \star}(M,(x - \delta_x, x + \delta_x)) \} \le \sup_{m \in \Mb} S_\star(m, x) + \eps.
\end{equation}
Then $\{ (x - \delta_x, x+\delta_x): x \in E \}$ is an open cover of $\Eb$. Due to compactness of $\Eb$, there exists finite number of intervals $\{ (x_i - \delta_{x_i}, x_i + \delta_{x_i}) \}_{i=1}^m$ that form a cover of $\Eb$, and such that the above equation holds. Therefore
\begin{equation}
\begin{aligned}
&\limsup_{n\rightarrow \infty}\frac{1}{n}\log \E\{\Crt_{n, \star}(M,E) \} \\
\le & \max_{i \in [m]} \limsup_{n\rightarrow \infty}\frac{1}{n}\log \E\{\Crt_{n, \star}(M, (x_i - \delta_{x_i}, x_i + \delta_{x_i})) \} \le \sup_{m \in \Mb, x \in \Eb} S_\star(m, x) + \eps.
\end{aligned}
\end{equation}
Eq. (\ref{eq:ThmCrUB}) holds by choosing arbitrarily small $\eps$.

Therefore, we just need to prove Eq. (\ref{eqn:Thm1UpperFirst}). For $x\in\R$, $S\subseteq \R$, define $d(x,S) = \inf\{|x-y|:\, y\in S\}$. 
For a given small $\delta > 0$, define 
\begin{equation}
\begin{aligned}
\Mb_\delta \eqndef& \{ m: d(m, \Mb) \leq \delta \},\\
\Eb_\delta \eqndef& \{ x: d(x, \Eb_0) \leq \delta \},\\
\cUb_\delta \eqndef& \{ \theta: \theta = \sqrt{2k(k-1)}  \cdot \lambda m^{k-2} (1-m^2), \, m \in \Mb_\delta \},\\
\cTb_\delta \eqndef& \{ t: t = \sqrt{2k/(k-1)}  \cdot x, \, x\in \Eb_\delta \},\\
\cEb_\delta \eqndef& \cUb_\delta \times\cTb_\delta.
\end{aligned}
\end{equation}
Since $E_0$ is bounded, we can define finite constants $U_0$, $T_0$ such that  $\cUb_\delta \subset [-U_0, U_0]$ and $\cTb_\delta \subset [-T_0, T_0]$. 

For any $\delta > 0$, there exists $N_{\delta}$ large enough, such that $t_n(x) \in \cTb_\delta$ and $\theta_n(m) \in \cUb_\delta$ for all $n \geq N_{\delta}$ and $(m,x) \in \Mb \times \Eb_0$. According to Proposition \ref{prop:critical_point}.$(b)$, there exists $N_{\eps, \delta} \ge N_\delta$, such that for all $n \geq N_{\eps, \delta}$, 
\begin{equation}
\begin{aligned}
&\sup_{m \in \Mb, x \in \Eb_0}\E\{ \l \det( \bH_n) \l\} = \sup_{m \in \Mb, x \in \Eb_0}\E\{ \l \det(\theta_n(m) \cdot \be_1 \be_1^\sT + \bW_{n-1} - t_n(x) \cdot \id_{n-1}) \l\} \\
\leq& \sup_{(\theta, t) \in \cEb_\delta}\E\{ \l \det(\theta \cdot \be_1 \be_1^\sT + \bW_{n-1} - t \cdot \id_{n-1}) \l\} 
\leq \exp\Big\{ (n-1) \Big[\sup_{t \in \cTb_\delta}\Phi_\star(t) + \eps\Big]  \Big\} 
\end{aligned}
\end{equation}
According to the expression for the expected number of critical point in Lemma \ref{lem:exact}, Eq.~(\ref{eqn:cp_st}),
\begin{equation}
\begin{aligned}
&\E\{\Crt_{n, \star}(M,E_0) \} \\
\le & \sup_{m \in \Mb, x \in \Eb_0} \E \{ \l \det (\bH_n) \l \} \cdot \PC_n \cdot \int_{E_0} \d x \int_{M} (1-m^2)^{-3/2} \d m  \\
& \quad \quad \times \exp\Big\{n\Big[ \frac{1}{2} (\log(k-1) + 1) + \frac{1}{2} \log(1-m^2) - k \lambda^2 m^{2k-2} (1-m^2)  - (x - \lambda m^k)^2 \Big]\Big\}  \\
\leq& \sup_{m \in \Mb, x \in \Eb_0} 4 \PC_n R_0 \times \exp\Big\{n\Big[ \frac{1}{2} (\log(k-1) + 1) - k \lambda^2 m^{2k-2} (1-m^2)  - (x - \lambda m^k)^2 \Big]\Big\}  \\
& \times \exp\Big\{(n-3)\Big[\frac{1}{2} \log(1-m^2) \Big] + (n-1) \cdot \sup_{t \in \cTb_\delta} [\Phi_\star(t) + \eps]\Big\}.
\end{aligned}
\end{equation}
Note that the pre-factor $2 \PC_n R_0$ is exponentially trivial. We have
\begin{equation}
\begin{aligned}
&\limsup_{n\rightarrow \infty}\frac{1}{n}\log \E\{\Crt_{n, \star}(M,E_0) \} \\
\leq &\sup_{m \in \Mb, x \in \Eb_0}\Big\{ \frac{1}{2}(\log(k-1)+1) + \frac{1}{2}\log(1-m^2) - k \lambda^2 m^{2k-2} (1-m^2) - (x- \lambda m^k) ^2 \Big\} + \sup_{t \in \cTb_\delta} \Phi_\star(t) + \eps.
\end{aligned}
\end{equation}
Letting $\eps, \delta \rightarrow 0_+$, and using the continuity of $\Phi_\star(t)$ and compactness of $\cEb_0$, we have
\begin{equation}
\begin{aligned}
&\limsup_{n\rightarrow \infty}\frac{1}{n}\log \E\{\Crt_{n, \star}(M,E_0) \} \\ 
\leq& \sup_{m \in \Mb, x \in \Eb_0}\Big\{ \frac{1}{2}(\log(k-1)+1) + \frac{1}{2}\log(1-m^2) - k \lambda^2 m^{2k-2} (1-m^2) - (x- \lambda m^k) ^2 \Big\} + \sup_{t \in \cTb_0} \Phi_\star(t).
\end{aligned}
\end{equation}
Note that $E_0 = (x_0 - \delta_0, x_0 + \delta_0)$, letting $\delta_0 \rightarrow 0$ and using the continuity of $\Phi_\star(t)$, we proved Eq. (\ref{eqn:Thm1UpperFirst}).

\noindent
{\bf Step 2. Lower bound. }
For any Borel sets $M \subset [-1,1]$ and $E \subset \R$, and for any $\eps > 0$, there exists $(m_0, x_0) \in M^o \times E^o$ such that 
\begin{align}
S_\star(m_0, x_0) \ge \sup_{(m,x) \in M^o \times E^o} S_\star(m,x) - \eps.
\end{align}

Denote $\theta_0 = \theta(m_0)$ and $t_0 = t(x_0)$. For a given small $\delta > 0$, define 
\begin{equation}
\begin{aligned}
M_0^\delta \eqndef& (m_0 - \delta, m_0 + \delta),\\
E_0^\delta \eqndef& (x_0 - \delta, x_0 + \delta),\\
\cB_0^\delta \eqndef& M_0^\delta \times E_0^\delta,\\
\cUo_{n}^\delta \eqndef&\{ \theta: \theta = \sqrt{2k(k-1)n/(n-1)}  \cdot \lambda m^{k-2} (1-m^2), \, m \in M_0^\delta  \},\\
\cTo_{n}^\delta \eqndef & \{ t: t = \sqrt{2kn/((k-1)(n-1))} \cdot x, \, x \in E_0^\delta \},\\
\cEo_n^\delta \eqndef& \cUo_n^\delta \times \cTo_n^\delta.
\end{aligned}
\end{equation}
We fix $\delta$ sufficiently small, so that $M_0^\delta \subset M^o$ and $E_0^\delta \subset E^o$. It is easy to see that $\cUo_{n}^\delta$ and $\cTo_{n}^\delta$ are open sets and $\theta_0 \in \cUo_{n}^\delta$, $t_0 \in \cTo_{n}^\delta$ are inner points. 

For this choice of $\delta$ and $\eps$, according to Proposition \ref{prop:critical_point}.$(c)$, for any $\eps_0 > 0$, we can find $N_{\eps, \eps_0, \delta}$ and $\delta_0 > 0$ such that as $n \geq N_{\eps, \eps_0, \delta}$, 
\begin{equation}
\begin{aligned}
\cEo_{0}^{\delta_0} \eqndef&~ (\theta_0 - \delta_0, \theta_0 + \delta_0) \times (t_0 - \delta_0, t_0 + \delta_0) \subset  \cEo_n^\delta,\\
\end{aligned}
\end{equation}
and 
\begin{equation}
\begin{aligned}
 \int_{(\theta, t) \in \cEo_{0}^{\delta_0}} \E\{\l \det(\theta \cdot \be_1 \be_1^\sT + \bW_{n-1} - t \cdot \id_{n-1}) \l \} \d \theta \d t \geq \exp\{ (n-1) [\Phi_\star(t_0)- \eps_0]\}.
\end{aligned}
\end{equation}

According to the expression for the expected number of critical point as in Eq. (\ref{eqn:lm_st}) in Lemma \ref{lem:exact}, 
\begin{equation}
\begin{aligned}
&\E\{\Crt_{n, \star}(M,E) \}  \geq \E\{\Crt_{n, \star}(M_0^\delta, E_0^\delta) \}\\
\ge & \PC_n \cdot \int_{\cB_0^\delta} \E \{ \l \det (H_n) \l \} \d x \d m \times \inf_{(m, x) \in \cB_0^\delta} \exp \Big\{ (n-3) \cdot \Big[ \frac{1}{2} \log(1-m^2)\Big] \\
& \quad \quad +n\Big[ \frac{1}{2} (\log(k-1) + 1) - k \lambda^2 m^{2k-2} (1-m^2)  - (x - \lambda m^k)^2 \Big]\Big\}  \\
\ge & \PC_n \cdot \int_{\cEo_{0}^{\delta_0}} \E \{ \l \det(\theta \cdot \be_1 \be_1^\sT + W_{n-1} - t \cdot \id_{n-1}) \l \} \frac{n-1}{2k\lambda n [(k-2) \cdot m(\theta)^{k-3} - k\cdot m(\theta)^{k-1}]}\d \theta \d t\\
& \times \inf_{(m, x) \in \cB_0^\delta} \exp \Big\{ n\Big[ \frac{1}{2} (\log(k-1) + 1) +\frac{1}{2} \log(1-m^2) - k \lambda^2 m^{2k-2} (1-m^2)  - (x - \lambda m^k)^2 \Big]\Big\}  \\
\ge & \frac{\PC_n}{8k^2 \lambda} \cdot  \exp\Big\{ (n-1) \cdot [\Phi_\star(t_0) - \eps_0] \Big\}\\
& \times \inf_{(m, x) \in \cB_0^\delta} \exp \Big\{ n\Big[ \frac{1}{2} (\log(k-1) + 1) +\frac{1}{2} \log(1-m^2) - k \lambda^2 m^{2k-2} (1-m^2)  - (x - \lambda m^k)^2 \Big]\Big\}.
\end{aligned}
\end{equation}
Note that the pre-constant $\PC_n /8k^2 \lambda$ is exponentially trivial on compact set. We have
\begin{equation}
\begin{aligned}
&\liminf_{n\rightarrow \infty}\frac{1}{n}\log \E\{\Crt_{n, \star}(M,E) \} \\
\geq &\inf_{(m,x) \in  \cB_0^\delta } \Big\{\frac{1}{2}(\log(k-1)+1) + \frac{1}{2}\log(1-m^2) - k \lambda^2 m^{2k-2} (1-m^2) - (x- \lambda m^k) ^2 \Big\} +  \Phi_\star(t_0) - \eps_0.
\end{aligned}
\end{equation}
Letting $\eps_0, \delta \rightarrow 0_+$, we have
\begin{align}
\liminf_{n\rightarrow \infty}\frac{1}{n}\log \E\{\Crt_{n, \star}(M,E) \} \geq  S_\star(m_0,x_0) \ge \sup_{m \in M^o, x \in E^o} S_\star(m,x) - \eps.
\end{align}
Letting $\eps \rightarrow 0_+$ gives the desired result. 

\end{proof}

In the following we prove Proposition \ref{prop:critical_point}.

\subsubsection{Proof of Proposition \ref{prop:critical_point}.$(a)$: Exponential tightness}
\label{sec:PropoCritA}

We need to upper bound $\E\{\Crt_{n,\star}([-1,1], ( -\infty, -z] \cup [z, \infty))\}$. Starting from Eq. (\ref{eqn:cp_st}), we have a crude upper bound
\begin{equation}
\begin{aligned}
&\E\{\Crt_{n, \star}([-1,1],(-\infty, z] \cup [z, \infty))\} \\
\leq &  4 \PC_n \cdot \int_{z}^\infty \d x \cdot \E \{ [4  x  + 2k \lambda + \dl W_{n-1} \dl_\op ]^n \} \times \exp\Big\{n\Big[ \frac{1}{2} (\log(k-1) + 1) - ( x - \lambda)^2 \Big]\Big\}.
\end{aligned}
\end{equation}
We  let $D_n = 4 \PC_n \cdot \exp\{ n [1/2 \cdot (\log(k-1) + 1)] \}$. It is easy to check that $D_n$ is exponentially finite. 

Taking $z \geq \max(2 k \lambda,1)$ (note that we consider $k \geq 2$) and let $Y_n = \dl W_{n-1} \dl_{\op}$, we have 
\begin{align}\label{eqn:et_bd1}
\E\{\Crt_{n, \star}([-1,1],(-\infty, z] \cup [z, \infty))\} &\leq   D_n \cdot \int_{z}^\infty \E \{ (5  x  + Y_n )^n \} \cdot \exp\{-nx^2/4 \}  \d x\\
& \le D_n \E \{ (1  + Y_n )^n \} \int_{z}^\infty (5  x )^n  \cdot \exp\{-nx^2/4 \}  \d x\, .
\end{align}
The  operator norm of a GOE matrix has sub-Gaussian tails, cf. Lemma \ref{lem:concentration_goe}. This immediately implies
\begin{align}
\E \{ (1  + Y_n )^n \}  \le \E\{e^{nY_n}\} \le  C^n\, ,
\end{align}
for some universal constant $C$, whence
\begin{align}
\E\{\Crt_{n, \star}([-1,1],(-\infty, z] \cup [z, \infty))\} &\le D_n C^n \int_{z}^\infty (5  x )^n  \cdot \exp\{-nx^2/4 \}  \d x\, ,
\end{align}
and the claim in Eq.~(\ref{eqn:exponential_tightness}) follows by Lemma \ref{lem:exp_tight_bound}.

\subsubsection{Proof of Proposition \ref{prop:critical_point}.$(b)$: Upper bound}
\label{sec:PropoCritB}

Recall that $\He_n = \theta  \be_1 \be_1^\sT + \bW_{n} - t \id_{n}$ and $\bX_n = \theta \be_1 \be_1^\sT + \bW_n$. 
Let $\sigma_{\rm sc}(\d \lambda) = \bfone_{|\lambda|\le 2}\sqrt{4-\lambda^2}/(2\pi) \d\lambda$ be the semicircle law, and 
denote by $\ball(\sigma_{\rm sc}, \delta)$ the ball of radius $\delta$ around $\sigma_{\rm sc}(\d \lambda)$, with the Dudley metric defined in Section \ref{sec:Defs}. 
Let $\ball_{R}(\sigma_{\rm sc},\delta)$ be the set of probability measures in $\ball(\sigma_{\rm sc}, \delta)$ with support in $[-R, R]$. For $\mu$ a probability measure on $\R$, define (for all $x$ such that the integral on the right-hand
side is well defined)
\begin{align}\label{eqn:boundonphi}
\Phi(\mu, x) = \int_{\R} \log \l \lambda - x \l \cdot \mu(\d \lambda).
\end{align}

We will often make use of the following fact: for any event $A$, we have 
(denoting by $L_{n} = 1/n \cdot \sum_{i=1}^{n} \delta_{x_i}$ the empirical measure of the numbers $\{ x_i\}_{i=1}^n$):
\begin{equation}\label{eqn:central_equality}
\begin{aligned}
\E\{\l \det(\He_n) \l; A \} =& \int_{\R^n} \prod_{i=1}^n \l x_i - t\l \cdot \ones_A \cdot \P_n^\theta(\d x_1, \ldots, \d x_n)\\
=& \frac{1}{Z_n^\theta}\int_{\R^n} \prod_{i=1}^n \l x_i - t\l \cdot I_n(\theta, x_1^n) \cdot \ones_A \cdot \prod_{i < j} \l x_i - x_j \l \cdot \prod_{i=1}^n  \exp\{-n x_i^2/4\} \d x_i\\
=& \frac{Z_n^0}{Z_n^\theta}\int_{\R^n} \exp\{ n \cdot \Phi(L_n, t)\} \cdot  I_n(\theta, x_1^n) \cdot \ones_A\cdot \P_n^0 (\d x_1,\ldots, \d x_n),\\
\end{aligned}
\end{equation}
where 
\begin{equation}
Z_n^\theta = Z_n^0 \int_{\R^n} I_n(\theta, x_1^n) \cdot  \P_n^0 (\d x_1,\ldots, \d x_n).
\end{equation}

We have upper bound
\begin{equation}
\begin{aligned}
\E\{\l \det(\He_n) \l\} \leq & \underbrace{\E\{\l \det(\He_n) \l; L_n \in \ball(\sigma_{\rm sc},\delta) \}}_{E_1} + \underbrace{\E\{\l \det(\He_n) \l; L_n \notin \ball(\sigma_{\rm sc},\delta) \}}_{E_2} \\
\end{aligned}
\end{equation}
where $\delta >0 $ is a fixed arbitrary small number.

According to Lemma \ref{lem:exp_vanishing}, $E_2 \le B_n^3 / A_n^2$ as a function of $(\theta, t)$ is exponentially vanishing on any compact set. 
Hence, we just need to consider the term $E_1$:
\begin{equation}
\begin{aligned}
E_1 = &\frac{ Z_{n}^0}{Z_{n}^\theta} \int_{\R^n} \exp\{n \cdot \Phi(L_{n}, t) \} \cdot I_n(\theta, x_1^n)   \cdot \ones\{ L_n \in \ball(\sigma_{\rm sc}, \delta) \} \cdot  \d \P_{n}^0\\
\leq & \exp\{ n \cdot \sup_{\mu \in \ball(\sigma_{\rm sc}, \delta)} \Phi(\mu, t)\}  \cdot  \frac{ Z_{n}^0}{Z_{n}^\theta}\int_{\R^n} I_n(\theta, x_1^n) \d \P_{n}^0\\
= & \exp\{ n \cdot \sup_{\mu \in \ball(\sigma_{\rm sc}, \delta)} \Phi(\mu, t)\}.
\end{aligned}
\end{equation}
Defining $\Phi_\eta(\mu,t) = \int_\R \log( \l t - \lambda \l \vee \eta ) \mu(\d\lambda)$, it is easy to verify that 
$\Phi_\eta(\mu,t)$ is continuous in $(\mu,t)\in M_1([-R_0,R_0]) \times \cTb_0$ for each $\eta$. Since $\Phi(\mu,t) = \inf_{\eta >0} \{ \Phi_\eta(\mu,t)\}$, it holds that 
$\Phi(\mu,t)$ is upper semicontinuous on the same domain. Further, a direct calculation yields $\Phi(\sigma_{\rm sc},t) = \Phi_\star(t)$. Therefore,
\begin{align}
\limsup_{\delta\rightarrow 0} \limsup_{n \rightarrow \infty}  \sup_{ (\theta, t) \in \cEb_0}  \frac{1}{n} \log E_1   \leq \limsup_{\delta\rightarrow 0} 
\sup_{ t \in \cTb_0,\mu \in \ball(\sigma_{\rm sc},\delta)} \Phi(\mu, t)   \leq \sup_{t \in \cTb_0} \Phi_\star(t).
\end{align}

Consequently, we have
\begin{align}
\limsup_{n\rightarrow \infty }\sup_{(\theta,t) \in \cEb_0} \frac{1}{n}\log \E\{\l \det(\He_n) \l \} \leq \sup_{t \in \cTb_0} \Phi_\star(t).
\end{align}

\subsubsection{Proof of Proposition \ref{prop:critical_point}.$(c)$: Lower bound}
\label{sec:PropoCritC}

Since $t\mapsto \Phi_\star (t)$ is continuous, we only need to prove  the lower bound for $(\theta_0, t_0)$ in a dense subset of $\R^2$. We consider two cases for $t_0$:
\begin{itemize}
\item[] \emph{Case 1:} $t_0 \in (-\infty, -2) \cup (2, \infty)$. In this case, the proof is easier, since $t_0$ is separated from the support of the semicircle law. 
We only consider the subcase $t_0 > 2$ and $\theta_0 > 1$, which is more difficult. The proof for $t_0 > 2$ and $\theta_0 < 1$ follows by a very similar argument. 
\item[] \emph{Case 2:} $t_0 \in (-2,2)$. This case is more challenging since $t_0$ is inside the support of semicircle law. We will distinguish two subcases:
subcase 2.1: $t_0 \in (-2,2)$ and 
$\theta_0 > 1$; and subcase 2.2 $t_0 \in (-2,2)$ and $\theta_0 < 1$. We use the estimate of the spherical integral in \cite{guionnet2005fourier} and \cite{maida2007large}. 
\end{itemize}

\noindent{\bf Case 1:} $t_0 \in (-\infty, -2) \cup (2, \infty)$. As mentioned, we consider  $t_0 > 2$ and $\theta_0 > 1$ here. The other cases are similar. 

Let $\rho(\theta) = \theta + 1/\theta $. Let  $\delta_0 \in (0,\delta)$ be such that $t_0 > 2+ 2 \delta_0$. We can then choose $\eps_0 \in (0,\delta)$ such that 
$\rho(\theta_0 + 2\eps_0) -\rho(\theta_0 - 2\eps_0) \le \delta_0$ and $\rho(\theta_0 - 2\eps_0) > 2$. 
Let $\Setb(\delta_0,\eps_0) = [t_0 - \delta_0, t_0 + \delta_0]\setminus [\rho(\theta_0 -2\eps_0), \rho(\theta_0 + 2\eps_0)]$, and 
$\Seta(\delta_0,\eps_0) = [\rho(\theta_0 - \eps_0), \rho(\theta_0 + \eps_0)] \cup [t_0  - 2\delta_0,t_0 + 2\delta_0]^c$. We have $d( \Seta(\delta_0,\eps_0),\Setb(\delta_0,\eps_0) ) > 0$, and the eigenvalues of the spiked matrix $\bX_n$ belongs to $\Seta(\delta_0,\eps_0)$ with probability converging to $1$ as $n\to\infty$. 

Thus, for $t\in \Setb(\delta_0,\eps_0)$, $\theta\in \cU_0^{\eps_0} = (\theta_0-\eps_0,\theta_0+\eps_0)$ we have the following lower bound, holding
for any $\delta'>0$ (here $L_n(\bX_n)$ denotes the empirical spectral distribution of the matrix $\bX_n$):
\begin{equation}\label{eqn:lb_cp}
\begin{aligned}
\E\{ \l \det(\He_n) \l \} =&\frac{ Z_{n}^0}{Z_{n}^\theta} \int_{\R^n} \exp\{n \cdot \Phi(L_{n}, t) \} \cdot I_n(\theta, x_1^n)   \cdot \d \P_{n}^0\\
\geq&\frac{ Z_{n}^0}{Z_{n}^\theta} \int_{\R^n} \exp\{n \cdot \Phi(L_{n}, t)  \} \cdot I_n(\theta, x_1^n)   \cdot \ones\{L_{n} \in \ball(\sigma_{\rm sc}, \delta'), \supp(L_n) \in \Seta(\delta_0,\eps_0) \}  \cdot \d \P_{n}^0\\
\geq&\left\{ \frac{ Z_{n}^0}{Z_{n}^\theta} \int_{\R^n}I_n(\theta, x_1^n)\ones\{L_{n} \in \ball(\sigma_{\rm sc}, \delta'), \supp(L_n) \in \Seta(\delta_0,\eps_0) \}  \d \P_{n}^0 \right\} \\
& \times \exp\Big\{n [ \inf_{\substack{\mu \in \ball(\sigma_{\rm sc}, \delta'),\\ \supp(\mu) \in \Seta(\delta_0,\eps_0)}}\Phi(\mu, t) ] \Big\},\\
\geq&\Big\{\P(\supp(L_n(\bX_n)) \subseteq \Seta(\delta_0,\eps_0))- \underbrace{ \frac{ Z_{n}^0}{Z_{n}^\theta} \int_{\R^n}I_n(\theta, x_1^n)\cdot \ones\{L_{n} \notin \ball(\sigma_{\rm sc}, \delta') \}  \d \P_{n}^0}_{G_1}  \Big\} \\
& \times \exp\Big\{n [ \inf_{\substack{\mu \in \ball(\sigma_{\rm sc}, \delta'),\\\supp(\mu) \in \Seta(\delta_0,\eps_0)}}\Phi(\mu, t) ] \Big\}.
\end{aligned}
\end{equation}
According to Lemma \ref{lem:exp_vanishing}, $G_1 = B_n^2/A_n^2$ is exponentially vanishing on compact sets, so we can drop this term. We also know that $\P(\supp(L_n(\bX_n)) \subseteq \Seta(\delta_0,\eps_0))$ is exponentially trivial on compact sets. 

This gives
\begin{equation}
\begin{aligned}
&\liminf_{n\rightarrow \infty} \frac{1}{n} \log  \int_{(\theta, t) \in \cEo_0^\delta} \E\{ \l \det(\He_n) \l\} \d \theta \d t\\
\ge & \liminf_{n\rightarrow \infty} \frac{1}{n} \log  \int_{\theta \in \cUo_0^{\eps_0}, t \in \Setb(\delta_0,\eps_0)} \E\{ \l \det(\He_n) \l\} \d \theta \d t\\
\geq& \liminf_{\delta' \rightarrow 0_+} \inf_{\substack{t \in \Setb(\delta_0,\eps_0), \mu \in \ball(\sigma_{\rm sc},\delta')\\ 
\supp(\mu) \in \Seta(\delta_0,\eps_0)}} \Phi(\mu,t) = \inf_{t \in \Setb(\delta_0,\eps_0)}\Phi_\star ( t).
\end{aligned}
\end{equation}
The last equality holds because $\Phi(\mu, t)$ is continuous with respect to $(\mu, t)$ on $\{(\mu ,t) : \mu \in \ball(\sigma_{\rm sc}, \delta'), \supp(\mu) \in \Seta(\delta_0,\eps_0), t \in \Setb(\delta_0,\eps_0) \}$.

Since $\Phi_\star(t)$ is continuous, letting first $\eps_0 \rightarrow 0_+$ and then $\delta_0 \rightarrow 0_+$, we have
\begin{equation}
\begin{aligned}
\liminf_{n\rightarrow \infty}  \frac{1}{n}\log \int_{(\theta, t) \in \cEo_0^\delta}  \E\{\l \det(\He_n) \l \} \d \theta \d t
\ge  \limsup_{\delta_0 \rightarrow 0_+}\limsup_{\eps_0 \rightarrow 0_+}  \inf_{t \in \Setb(\delta_0,\eps_0)}\Phi_\star ( t) = \Phi_\star ( t_0).
\end{aligned}
\end{equation}

{\bf Case 2.1: } We next consider the case as $t_0 \in (-2, 2)$ and $\theta_0 > 1$. We further assume $t_0 > 0$, since the case $t_0\le 0$ can be 
treated analogously. Define
\begin{align}
H_1 =& \int_{\R^n} \exp\{n \cdot \Phi(L_{n}, t) \} \cdot I_n(\theta, x_1^n)   \cdot \d \P_{n}^0(x_1^n),\\
H_2 =& \int_{\R^n} I_n(\theta, x_1^n)   \cdot \d \P_{n}^0(x_1^n).
\end{align}
We have $\E\{ \l \det(\He_n) \l \} = H_1 / H_2$. Let $\rho(\theta) = \theta + 1/\theta$. Since $\Phi_\star(t_0) = t_0^2/4 - 1/2$ for $t_0 \in (-2,2)$, it suffices to show that 
\begin{align}
\liminf_{n\rightarrow \infty} \frac{1}{n} \log \int_{(\theta,t) \in \cEo_0^\delta} H_1 \d \theta \d t ~\ge~ & t_0^2/4 + \Phi_\star(\rho(\theta_0)) - \rho(\theta_0)^2/4+ J(\sigma_{\rm sc}, \rho(\theta_0), \theta_0),\label{eqn:lb1}\\
\limsup_{\delta \rightarrow 0_+}\limsup_{n\rightarrow \infty} \sup_{\theta \in \cUo_0^\delta} \frac{1}{n} \log H_2 ~\le~ & 1/2 +  \Phi_\star(\rho(\theta_0)) - \rho(\theta_0)^2/4+ J(\sigma_{\rm sc}, \rho(\theta_0),\theta_0)\, , \label{eqn:lb2}
\end{align}
with $J(\cdots)$ defined as per Lemma \ref{lem:ldp_si}.

By \cite[Proposition 3.1]{maida2007large}, for fixed $\theta > 1$, we have
\begin{align}
\limsup_{n\rightarrow \infty}  \frac{1}{n} \log H_2 ~\le~ & 1/2 +  \Phi_\star(\rho(\theta)) - \rho(\theta)^2/4+ J(\sigma_{\rm sc}, \rho(\theta), \theta).
\end{align}
Therefore, Eq. (\ref{eqn:lb2}) is implied by the convexity of $1/n\cdot \log H_2$ as a function of $\theta$.

To prove Eq. (\ref{eqn:lb1}), first, we choose $\delta_0\in(0,\delta)$ and $\eps_0>0$ small enough such that $\rho(\theta_0 - \delta_0) - \eps_0 > t_0 + 2 \delta_0$. For any fixed $\theta \in (\theta_0 - \delta_0, \theta_0 + \delta_0)$, we have
\begin{equation}
\begin{aligned}
\int_{\cTo_0^\delta} H_1 \d t 
= & \frac{1}{Z_n^0}\int_{\cTo_0^\delta} \d t \int_{\R^n} I_n(\theta, x_1^n) \cdot \prod_{i=1}^n \l t - x_i \l  \cdot  \prod_{1 \le i < j\le n} \l x_i - x_j \l \cdot \exp\Big\{-\frac{n}{4} \sum_{i=1}^n x_i^2 \Big\}\prod_{i=1}^n \d x_i\\
=&\frac{1}{Z_n^0} \int_{x_0 \in \cTo_0^\delta} \int_{\R^n}  I_n(\theta, x_1^n) \cdot  \prod_{0 \le i < j \le n} \l x_i - x_j \l  \cdot \exp\Big\{- \frac{n}{4}\sum_{i=0}^n x_i^2 \Big\} \cdot \prod_{i=0}^n \d x_i \cdot \exp\Big\{ \frac{n}{4} x_0^2 \Big\}\\
\ge & \frac{1}{Z_n^0} \int_{x_n \in \ball(\rho(\theta) , \eps_0)} \int_{x_0 \in  \cTo_0^{\delta_0}} \int_{x_1^{n-1} \in \R^{n-1}} \prod_{i=0}^{n-1} \l x_n - x_i \l \exp\Big\{-\frac{n}{4} \,
 x_n^2\Big\} \d x_n\\
&\times I_n(\theta, x_1^n) \cdot \ones\{ L_n(x_0^{n-1}) \in \ball_{2 + \eps_0}(\sigma_{\rm sc}, n^{-1/4}) \} \prod_{0 \le i < j \le n-1} \l x_i - x_j \l  \cdot \exp\Big\{- \frac{n}{4}\sum_{i=0}^{n-1} x_i^2 \Big\} \cdot \prod_{i=0}^{n-1} \d x_i \\
&\times \exp\Big\{ \frac{n}{4}(t_0-\delta_0)^2\} \Big\}\, .\\
\end{aligned}
\end{equation}
Note that for $n$ sufficiently large, $L_n(x_0^{n-1}) \in \ball_{2 + \eps_0}(\sigma_{\rm sc}, n^{-1/4})$ implies that $L_{n-1}(x_1^{n-1}) \in \ball_{2 + \eps_0}(\sigma_{\rm sc}, 2 n^{-1/4})$. Therefore, for any $\theta \in (\theta_0 - \delta_0, \theta_0 + \delta_0)$, we have 
\begin{equation}\label{eqn:decomp1}
\begin{aligned}
\int_{\cTo_0^\delta} H_1 \d t  \ge & (\rho(\theta_0 - \delta_0) - t_0 - \delta_0 - \eps_0) \cdot \exp\{ - \rho(\theta_0 + \delta_0)^2/4 \} \times 2 \eps_0 ~~~~~~~~~~~~~~~~~ (\equiv A_1)\\
&\times \exp\Big\{ \frac{n}{4} (t_0-\delta_0)^2 \} \Big\}~~~~~~~~~~~~~~~~~~~~~~~~~~~~~~~~~~~~ (\equiv A_2)\\
&\times \inf_{\substack{L_{n-1}(x_1^{n-1}) \in \ball_{2 + \delta_0}(\sigma_{\rm sc}, 2n^{-1/4}),\\x_n \in \ball(\rho(\theta) , \eps_0 + 2 \delta_0)}} \exp\{ (n-1) [\Phi(L_{n-1}(x_1^{n-1}), x_n)-1/4 \cdot x_n^2] \} ~~(\equiv A_3)\\
& \times \inf_{\substack{L_{n-1}(x_1^{n-1}) \in \ball_{2 + \delta_0}(\sigma_{\rm sc}, 2n^{-1/4}),\\x_n \in \ball(\rho(\theta ) , \eps_0 + 2 \delta_0)}} I_n(\theta, x_1^n)~~~~~~~~~~~~~~~~~~~~~~~~~~~(\equiv A_4)\\
&\times \int_{x_0 \in  \cTo_0^{\delta}} \int_{x_1^{n-1} \in [-2-\eps_0, 2 + \eps_0]^{n-1}} \ones\{ L_n(x_0^{n-1}) \in \ball(\sigma_{\rm sc}, n^{-1/4}) \} \P_n^0( \d x_0^{n-1}) ~~~~~~~(\equiv A_5)\\
\end{aligned}
\end{equation}
The term $A_1$ is strictly positive and does not depend on $n$. Therefore it is exponentially trivial. 

Since $\Phi(\mu, t)$ is continuous on the set $\{(\mu, t): \mu \in \ball_{2 + \delta_0}(\sigma_{\rm sc}, \delta'), t \in \ball(\rho(\theta) , \eps_0 + 2 \delta_0)\}$, the term $A_3$
is lower bounded as follows
\begin{align}
\liminf_{n\rightarrow \infty} \frac{1}{n} \log A_3 \ge \inf_{x \in \ball(\rho(\theta ) , \eps_0 + 2 \delta_0)} \Big[\Phi_\star (x)-\frac{1}{4} x^2\Big].
\end{align}

For the term $A_4$, using the continuity of spherical integral Lemma \ref{lem:continuity_si2} and \ref{lem:ldp_si}, we have 
\begin{align}
\liminf_{n\rightarrow \infty}\frac{1}{n}\log A_4 \ge & J(\sigma_{\rm sc}, \rho(\theta ), \theta) - g_{\theta}(2 \eps_0 +4 \delta_0)\, , 
\end{align}
where $g_\theta(\,\cdot\,) = g_{1/4,\theta,\rho(\theta)+1}(\,\cdot\,)$

For the term $A_5$, we have 
\begin{equation}
\begin{aligned}
A_5 \ge & \E_{{\rm GOE}, n}\Big[ \frac{1}{n} \#\{\lambda_i: \lambda_i \in \cTo_0^{\delta}\} \Big] - \P_{{\rm GOE}, n}\Big(\max_{i \in [n]}\l \lambda_i \l  \geq 2 + \delta_0\Big) - \P_{{\rm GOE}, n}\Big( L_n \notin \ball(\sigma_{\rm sc}, n^{-1/4})\Big). \\
\end{aligned}
\end{equation}
The first term is exponentially trivial, the second term is exponentially decay, and the third term is exponentially vanishing. Therefore, $A_5$ is exponentially trivial. 

Putting the various terms together we get, for any $\theta \in (\theta_0 - \delta_0, \theta_0 + \delta_0)$ and $t_0 > 0$,
\begin{align}
\liminf_{n\rightarrow \infty} \frac{1}{n}\log \int_{\cTo_0^\delta} H_1 \d t \ge & \frac{1}{4} (t_0-\delta_0)^2  + 
J(\sigma_{\rm sc}, \rho(\theta ), \theta) - g_\theta (2 \eps_0 + 4 \delta_0) \\
&+ \inf_{x \in \ball(\rho(\theta ) , \eps_0 + 2 \delta_0)} [\Phi_\star (x)-1/4 \cdot x^2].
\end{align}

For any fixed $\theta \in (\theta_0 - \delta_0, \theta_0+\delta_0)$, letting $\eps_0, \delta_0 \rightarrow 0$ and using the continuity of $\Phi_\star(x)$ and $J(\sigma_{\rm sc}, x, \theta)$ in variable $x$ (eee Eqs. (\ref{eq:PhiDef}) and (\ref{eqn:JDef})), we have
\begin{equation}
\begin{aligned}
\liminf_{n\rightarrow \infty} \frac{1}{n}\log \int_{\cTo_0^\delta} H_1 \d t \ge & \frac{1}{4} t_0^2 + J(\sigma_{\rm sc}, \rho(\theta ), \theta)  + \Phi_\star (\rho(\theta ))-1/4 \cdot \rho(\theta )^2.
\end{aligned}
\end{equation}

Note that $\{ 1/n \cdot \log \int_{\cTo_0^\delta} H_1 \d t \}_{n \in \N_+}$ are convex functions and uniformly bounded in $\theta$. Therefore, according to Lemma \ref{lem:cvx_uniform}, the above inequality hold uniformly for $\theta \in (\theta_0 - \delta_0, \theta_0 + \delta_0)$. That is 
\begin{equation}
\begin{aligned}
\liminf_{n\rightarrow \infty} \frac{1}{n}\log \int_{\cEo_0^\delta} H_1 \d \theta \d t \ge & \liminf_{n\rightarrow \infty} \inf_{\theta \in \cUo_0^{\delta_0}}\frac{1}{n}\log \int_{\cEo_0^\delta} H_1 \d t\\
\ge & \inf_{\theta \in \cUo_0^{\delta_0}} \Big[\frac{1}{4} t_0^2 + J(\sigma_{\rm sc}, \rho(\theta ), \theta)  + \Phi_\star (\rho(\theta ))-1/4 \cdot \rho(\theta )^2\Big].
\end{aligned}
\end{equation}
Letting $\delta_0 \rightarrow 0$ gives the desired result. 

{\bf Case 2.2: } $t_0 \in (-2, 2)$ and $\theta_0 < 1$. We further assume $t_0 > 0$, as the case $t_0<0$. For any fixed small $\eps_0,\delta'>0$, we have lower bound
\begin{equation}
\begin{aligned}
\E\{ \l \det(\He_n) \l \} =&\frac{ Z_{n}^0}{Z_{n}^\theta} \int_{\R^n} \exp\{n \cdot \Phi(L_{n}, t) \} \cdot I_n(\theta, x_1^n)   \cdot \d \P_{n}^0\\
\geq&\frac{ Z_{n}^0}{Z_{n}^\theta} \int_{\R^n} \exp\{n \cdot \Phi(L_{n}, t)  \} \cdot I_n(\theta, x_1^n)   \cdot \ones\{L_{n} \in \ball_{2 + \eps_0}(\sigma_{\rm sc}, \delta') \}  \cdot \d \P_{n}^0\\
\ge &   \frac{ Z_{n}^0}{Z_{n}^\theta} \cdot \Big\{\underbrace{\int_{\R^n} \exp\{n \cdot \Phi(L_{n}, t)  \}    \cdot \ones\{L_{n} \in \ball_{2 + \eps_0}(\sigma_{\rm sc}, \delta') \}  \cdot \d \P_{n}^0 }_{F_1} \cdot \underbrace{  \inf_{L_n \in \ball_{2 + \eps_0}(\sigma_{\rm sc}, \delta') } I_n(\theta, x_1^n)  }_{F_2}\Big\}\\
\end{aligned}
\end{equation}

For the term $F_1$, we have 
\begin{equation}
\begin{aligned}
F_1 \ge \underbrace{\int_{[-2 - \eps_0, 2 + \eps_0]^n} \exp\{n \cdot \Phi(L_{n}, t)  \} \cdot \d \P_{n}^0}_{F_3} - \underbrace{\int_{\R^n} \exp\{n \cdot \Phi(L_{n}, t)  \}    \cdot \ones\{L_{n} \notin \ball(\sigma_{\rm sc}, \delta') \}  \cdot \d \P_{n}^0}_{F_4}.
\end{aligned}
\end{equation}

According to Lemma \ref{lem:exp_vanishing}, $F_4 = B_n^1$ is exponentially vanishing on compact sets.   For the term $F_3$, letting $0 < \delta_0 < \delta$, we have 
\begin{equation}
\begin{aligned}
& \int_{t \in \cTo_0^\delta} \d t\int_{[-2 - \eps_0, 2 + \eps_0]^n} \exp\{n \cdot \Phi(L_{n}, t)  \} \cdot \d \P_{n}^0 \\
=& \frac{1}{Z_n^0}\int_{t \in \cTo_0^\delta} \int_{[-2 - \eps_0, 2 + \eps_0]^n} \prod_{t = 1}^n \l t - x_i \l \cdot \prod_{1 \le i < j \le n} \l x_i - x_j \l  \cdot \exp\Big\{- \frac{n}{4}\sum_{i=1}^n x_i^2 \Big\} \cdot \prod_{i=1}^n \d x_i  \cdot \d t\\
=& \frac{1}{Z_n^0}\int_{x_0 \in \cTo_0^\delta} \int_{[-2 - \eps_0, 2 + \eps_0]^n} \prod_{0 \le i < j \le n} \l x_i - x_j \l  \cdot \exp\Big\{- \frac{n}{4}\sum_{i=0}^n x_i^2 \Big\} \cdot \prod_{i=0}^n \d x_i \cdot \exp\Big\{ \frac{n}{4} x_0^2 \Big\}\\
\ge & \frac{1}{Z_n^0}\Big(1+ \frac{1}{n}\Big)^{\frac{(n+1)(n+2)}{4}}\int_{y_0 \in \sqrt{\frac{n}{n+1}} \cTo_0^{\delta_0}} \int_{[-2 - \eps_0/2, 2 + \eps_0/2]^n} \prod_{0 \le i < j \le n} \l y_i - y_j \l  \cdot \exp\Big\{- \frac{n+1}{4}\sum_{i=0}^n y_i^2 \Big\} \cdot \prod_{i=0}^n \d y_i \\
&\times \exp\Big\{ \frac{n}{4}  (t_0-\delta_0)^2 \Big\}\\
= & \frac{Z_{n+1}^0}{Z_n^0}\Big(1+ \frac{1}{n}\Big)^{\frac{(n+1)(n+2)}{4}} \E_{\rm GOE}^{n+1} \Big[\frac{1}{n+1}\# \Big\{ \lambda_i: \lambda_i \in \sqrt{\frac{n}{n+1}}\cTo_0^{\delta_0}   \Big\} \cdot \ones\{ \max\l \lambda_i \l \le 2+\eps_0/2\} \Big] \\
& \times \exp\Big\{ \frac{n}{4} (t_0-\delta_0)^2\Big\}\\
\end{aligned}
\end{equation}

Using the Selberg's integral formula, we have 
\begin{align}
\lim_{n\rightarrow \infty} \frac{1}{n} \log \Big\{ \frac{Z_{n+1}^0}{Z_n^0}\Big(1+ \frac{1}{n}\Big)^{\frac{(n+1)(n+2)}{4}} \Big\} = -\frac{1}{2}. 
\end{align}

Similar to the method dealing with the term $A_5$ in Eq. (\ref{eqn:decomp1}), we have
\begin{align}
\lim_{n\rightarrow \infty} \frac{1}{n} \log \E_{\rm GOE}^{n+1} \Big[\frac{1}{n+1}\# \Big\{ \lambda_i: \lambda_i \in \sqrt{\frac{n}{n+1}}\cTo_0^{\delta_0}   \Big\} \cdot \ones\{ \max\l \lambda_i \l \le 2+\eps_0/2\} \Big]  = 0.
\end{align}

Now we turn to look at the term $F_2$. For any fixed $\theta \in \cUo_0^\delta$, there is a margin between $\theta$ and $1$, so we can find $\eta$ small enough so that $\theta \in \cup_{\mu \in \ball(\sigma_{\rm sc}, \delta ')} H_\mu([-2 - \eps_0 - \eta, 2 + \eps_0 + \eta]^c)$ as $\eps_0, \delta'$ is small enough. Due to the continuity of the spherical integral, cf. Lemma \ref{lem:continuity_si} and \ref{lem:ldp_si}, there exists $g_{\theta, \eta}(\delta)> 0$ as $\delta > 0$, and $\lim_{\delta \rightarrow 0} g_{\theta, \eta}(\delta)= 0$, such that for all $n$ large enough,
\begin{align}
\frac{1}{n} \log  \inf_{L_n \in \ball_{2 + \eps_0}(\sigma_{\rm sc}, \delta') } I_n(\theta, x_1^n) \ge J(\sigma_{\rm sc}, 2 + \eps_0,\theta) - g_{\theta, \eta}(\delta').
\end{align}
Using the right-continuity of function $J(\sigma_{\rm sc}, x, \theta)$ with respect to $x$ at $x = 2$, we have
\begin{align}
\liminf_{\eps_0, \delta'\rightarrow 0_+}\liminf_{n\rightarrow \infty} \frac{1}{n} \log  \inf_{L_n \in \ball_{2 + \eps_0}(\sigma_{\rm sc}, \delta') } I_n(\theta, x_1^n) \ge J(\sigma_{\rm sc}, 2, \theta).
\end{align}

Therefore for any fixed $\theta \in \cUo_0^\delta$,
\begin{equation}
\begin{aligned}
&\liminf_{n \rightarrow \infty} \frac{1}{n}\log\int_{t \in \cTo_0^\delta}  \int_{\R^n} \exp\{n \cdot \Phi(L_{n}, t)  \} \cdot I_n(\theta, x_1^n)   \cdot \d \P_{n}^0 \\
\ge& \limsup_{\delta_0\rightarrow 0}  \Big\{J(\sigma_{\rm sc}, 2,\theta) + \frac{1}{4}(t_0 - \delta_0)^2 - \frac{1}{2}\Big\}= J(\sigma_{\rm sc}, 2, \theta) + \Phi_\star(t_0). 
\end{aligned}
\end{equation}
Since $1/n \cdot \log\int_{t \in \cTo_0^\delta}  \int_{\R^n} \exp\{n \cdot \Phi(L_{n}, t)  \} \cdot I_n(\theta, x_1^n)   \cdot \d \P_{n}^0$ is convex in $\theta$, according to Lemma \ref{lem:cvx_uniform}, we have
\begin{equation}
\begin{aligned}
&\liminf_{n \rightarrow \infty} \frac{1}{n}\log\int_{(\theta, t) \in \cEo_0^\delta}  \int_{\R^n} \exp\{n \cdot \Phi(L_{n}, t)  \} \cdot I_n(\theta, x_1^n)   \cdot \d \P_{n}^0 \ge J(\sigma_{\rm sc},2, \theta_0) + \Phi_\star(t_0). 
\end{aligned}
\end{equation}

By \cite[Proposition 3.1]{maida2007large}, we have for fixed $\theta \in (\theta_0 - \delta, \theta_0 + \delta)$, we have
\begin{align}
\limsup_{n\rightarrow \infty} \frac{1}{n}\log(Z_n^\theta/ Z_n^0)~\le~ & J(\sigma_{\rm sc}, 2, \theta).
\end{align}
By the convexity of $\sup_{t \in \cTo_0^\delta} \frac{1}{n} \log( Z_n^\theta/ Z_n^0)$ as a function of $\theta$, we have 
\begin{align}
\limsup_{\delta \rightarrow 0_+} \limsup_{n\rightarrow \infty} \sup_{\theta \in \cUo_0^\delta}\frac{1}{n}\log(Z_n^\theta/ Z_n^0)~\le~ & J(\sigma_{\rm sc}, 2, \theta_0).
\end{align}

Therefore, as $t_0 \in (-2,2)$ and $\theta_0 < 1$, we have
\begin{equation}
\begin{aligned}
&\liminf_{n\rightarrow \infty}  \frac{1}{n}\log \int_{(\theta, t) \in \cEo_0^\delta}  \E\{\l \det(\He_n) \l \} \d \theta \d t \ge  \Phi_\star(t_0).
\end{aligned}
\end{equation}

\subsection{Proof of Theorem \ref{thm:local_maxima}}
\label{sec:ProofMaxima}

\begin{proposition}\label{prop:local_maxima}
The following statements hold
\begin{enumerate}[label = (\alph*)]
\item Exponential tightness. 
\begin{align}\label{eqn:exponential_tightness2}
\lim_{z \rightarrow \infty}\limsup_{n\rightarrow \infty}\frac{1}{n}\log\E\{\Crt_{n,\knot}([-1,1], ( -\infty, -z] \cup [z, \infty))\}  = -\infty.
\end{align}
\item Upper bound. For any fixed large $U_0>0$ and $T_0>0$, denote $\cUb_0 \subset [- U_0, U_0]$ and $\cTb_0 \subset [ -T_0, T_0]$ to be two compact sets, and denote $\cEb_0 \eqndef \cUb_0  \times  \cTb_0$. Then we have
\begin{align}\label{eqn:local_maxima_ub}
\limsup_{n\rightarrow \infty} \sup_{(\theta, t) \in \cEb_0} \frac{1}{n}\log \E\{  \l \det( \He_n )\l \cdot \ones \{ \He_n \preceq 0 \} \} \leq \sup_{ (\theta,t) \in \cEb_0}  [\Phi_\star(t) - L(\theta,t)]
\end{align}
\item Lower bound. For any fixed $\delta > 0$, $\theta_0$ and $t_0$, denote $\cUo_0^\delta = (\theta_0 - \delta, \theta_0 + \delta)$ and $\cTo_0^\delta = (t_0 - \delta, t_0 + \delta)$, and denote $\cEo_0^\delta \eqndef \cUo_0^\delta \times \cTo_0^\delta$. Then we have
\begin{align}
\liminf_{n\rightarrow \infty}  \frac{1}{n}\log \int_{(\theta, t) \in \cEo_0^\delta} \E\{\l \det(\He_n) \l \cdot \ones\{\He_n \preceq 0\} \} \d \theta \d t \geq  \Phi_\star(t_0) - L(\theta_0, t_0).
\end{align}
\end{enumerate}
\end{proposition}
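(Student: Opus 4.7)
The overall plan is to mimic the structure of the proof of Proposition \ref{prop:critical_point}, with the added ingredient that the indicator $\ones\{\bH_n \preceq 0\}$ constrains the largest eigenvalue of the unshifted spike matrix. Concretely, writing $\bX_n = \theta\be_1\be_1^\sT + \bW_n$ so that $\He_n = \bX_n - t\id_n$, the event $\{\He_n \preceq 0\}$ is exactly $\{\lambda_{\max}(\bX_n) \le t\}$, and on this event $|\det(\He_n)| = \prod_i(t-\lambda_i(\bX_n))$. Part (a) is immediate since $\Crt_{n,\knot}(M,E) \le \Crt_{n,\star}(M,E)$, so Proposition \ref{prop:critical_point}(a) applies verbatim.

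For part (b), I would repeat the decomposition of the proof of Proposition \ref{prop:critical_point}(b), splitting according to whether $L_n \in \ball(\sigma_{\rm sc},\delta)$. The off-ball term is bounded by the analogous term for critical points (the extra indicator only shrinks the expectation), hence exponentially vanishing by Lemma \ref{lem:exp_vanishing}. For the on-ball term, use the central equality (\ref{eqn:central_equality}) with the extra indicator $\ones\{\max x_i \le t\}$ to write
\begin{equation}
E_1 \le \exp\Big\{n \sup_{\mu \in \ball(\sigma_{\rm sc},\delta)} \Phi(\mu,t)\Big\} \cdot \P_n^{\theta}(\lambda_{\max}(\bX_n) \le t).
\end{equation}
The first factor gives $\Phi_\star(t) + o(1)$ as $\delta \to 0$ (exactly as before), and Lemma \ref{lem:LDP_spiked} bounds the second factor by $\exp\{-nL(\theta,t)+o(n)\}$. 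Uniformity over the compact set $\cEb_0$ follows from the upper semicontinuity of $\Phi_\star - L$ together with a finite open-cover argument as in Step 1 of the proof of Theorem \ref{thm:critical_point}.

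For part (c), I would again follow the case analysis of Proposition \ref{prop:critical_point}(c). Note that $L(\theta_0,t_0) = \infty$ for $t_0 < 2$, so that case is vacuous, and $L(\theta_0,t_0) = 0$ whenever $t_0 \ge \rho(\theta_0)$ or $\theta_0 \le 1$. In the latter ``free'' regime the constraint $\lambda_{\max}(\bX_n) \le t$ holds with probability tending to $1$, so the lower bound for $\E\{|\det(\He_n)|\}$ established in Proposition \ref{prop:critical_point}(c) immediately descends to $\E\{|\det(\He_n)|\ones\{\He_n \preceq 0\}\}$ after subtracting an $o(n)$ probability-of-bad-event correction. In the genuinely rare regime $2 \le t_0 < \rho(\theta_0)$ and $\theta_0 > 1$ the idea is to produce configurations that simultaneously push the top eigenvalue below $t$ and preserve the bulk near the semicircle. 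Namely, write
\begin{equation}
\E\{|\det(\He_n)|\ones\{\He_n \preceq 0\}\} = \frac{Z_n^0}{Z_n^\theta}\int e^{n\Phi(L_n,t)} I_n(\theta,x_1^n) \ones\{\max x_i \le t\}\, \d\P_n^0,
\end{equation}
restrict to $L_n \in \ball_{t}(\sigma_{\rm sc},\delta')$ (empirical measure near semicircle and all eigenvalues $\le t$), on which $\Phi(L_n,t) \ge \Phi_\star(t) - o(1)$ by continuity. The resulting $\P_n^\theta$-probability of this restricted event is bounded below by $e^{-nL(\theta,t_-) + o(n)}$ via the lower bound half of Lemma \ref{lem:LDP_spiked}, provided one knows that conditional on $\lambda_{\max}(\bX_n) \le t$ the bulk empirical distribution still converges to $\sigma_{\rm sc}$; this is a standard decoupling fact for the spiked GOE (see \cite{maida2007large}). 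Integration over $(\theta,t)\in\cEo_0^\delta$ and the passage from fixed $\theta$ to an interval uses the same convexity argument via Lemma \ref{lem:cvx_uniform} as in Case 2 of the proof of Proposition \ref{prop:critical_point}(c).

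The main obstacle is the lower bound in the genuinely rare regime, where simultaneously controlling the log-determinant and the edge constraint requires the joint large deviation statement that the bulk and the top eigenvalue decouple. A secondary technical point is the case $t_0 = 2$ (the boundary where $L(\theta_0,\cdot)$ becomes infinite from the left), which should be handled by a continuity/approximation argument using the right-continuity of $J(\sigma_{\rm sc},x,\theta)$ in $x$ at $x=2$ and the continuity of $L(\theta,t)$ in $t$ on $[2,\infty)$.
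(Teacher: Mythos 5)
Your treatment of parts (a) and (b) coincides with the paper's proof: (a) follows from $\Crt_{n,\knot}\le \Crt_{n,\star}$, and (b) uses the same on-ball/off-ball decomposition, with the on-ball term factored as $\exp\{n\sup_{\mu\in\ball(\sigma_{\rm sc},\delta)}\Phi(\mu,t)\}\cdot\P(\lambda_{\max}(\bX_n)\le t)$ and Lemma \ref{lem:LDP_spiked} applied to the probability. (The paper obtains uniformity over $\cEb_0$ from the coordinate-wise monotonicity of $(\theta,t)\mapsto\P(\lambda_{\max}(\bX_n)\le t)$ rather than a covering argument, but either works.)

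The real issue is precisely the step you flag as ``the main obstacle'' in part (c): you appeal to an unproven conditional statement that, given $\lambda_{\max}(\bX_n)\le t$, the bulk still concentrates on the semicircle. No such decoupling fact is needed, and the paper does not use one. It restricts the eigenvalue integral to the joint event $A\cap B$ with $A=\{x_{\max}\le\min\{t,\,t_0-\delta_0-\eps_0\}\}$ and $B=\{L_n\in\ball(\sigma_{\rm sc},\delta')\}$, bounds $\Phi(L_n,t)$ from below there, and then uses the elementary inequality
\begin{align}
\frac{Z_n^0}{Z_n^\theta}\int I_n(\theta,x_1^n)\,\ones_{A\cap B}\,\d\P_n^0
\;\ge\;
\P\big(\lambda_{\max}(\bX_n)\le \min\{t,\,t_0-\delta_0-\eps_0\}\big)
-\frac{Z_n^0}{Z_n^\theta}\int I_n(\theta,x_1^n)\,\ones_{B^c}\,\d\P_n^0 .
\end{align}
The subtracted term is the quantity $B_n^2/A_n^2$ of Lemma \ref{lem:exp_vanishing}, hence exponentially vanishing (super-exponentially small), whereas the first term is only exponentially small, of order $e^{-nL(\theta,t_-)+o(n)}$ by Lemma \ref{lem:LDP_spiked}; the difference therefore retains the correct exponential rate whenever $L<\infty$, i.e.\ for every $t_0>2$. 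This one argument covers your ``free'' and ``genuinely rare'' regimes simultaneously (in the free regime $L=0$ and the probability is exponentially trivial), so the case split, the convexity Lemma \ref{lem:cvx_uniform}, and the external decoupling fact are all unnecessary; uniformity over the small box follows again from monotonicity of $\P(\lambda_{\max}(\bX_n)\le t)$ in $(\theta,t)$ and continuity of $L$ on $\{t>2\}$. Relatedly, your claim that the unconstrained lower bound ``immediately descends'' after subtracting a probability correction is not licensed at the level of $\E\{\l\det(\He_n)\l\}$ (one would need to control $\E\{\l\det(\He_n)\l\,\ones_{A^c}\}$, not just $\P(A^c)$); the subtraction must be performed inside the eigenvalue integral as above. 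Your handling of $t_0=2$ by approximation from $t>2$ is consistent with the paper's reduction to a dense set of $(\theta_0,t_0)$.
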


Assume this proposition holds, we are in a good position to prove Theorem \ref{thm:local_maxima}. 

\begin{proof}

Because of the exponential tightness property, we only need to consider the case when $E$ is bounded. 

\noindent
{\bf Step 1. Upper bound. }
Denoting $E_0 = (x_0 - \delta_0, x_0 + \delta_0)$. Using the same argument as in the proof of upper bound in Theorem \ref{thm:critical_point}, we just need to show that
\begin{equation}\label{eqn:Thm2UpperFirst}
\begin{aligned}
\lim_{\delta_0 \rightarrow 0_+}\limsup_{n\rightarrow \infty}\frac{1}{n}\log \E\{\Crt_{n, \knot}(M,E_0) \} \leq \sup_{m \in \Mb} S_\knot(m, x_0). 
\end{aligned}
\end{equation}

For any small $\delta > 0$, define 
\begin{equation}
\begin{aligned}
\Mb_\delta =& \{ m: d(m, M) \leq \delta \},\\
\Eb_\delta =& \{ x: d(x, E_0) \leq \delta \},\\
\cUb_\delta = & \{ \theta: \theta = \sqrt{2k(k-1)}  \cdot \lambda m^{k-2} (1-m^2), \, m \in \Mb_\delta \},\\
\cTb_\delta = & \{ t: t = \sqrt{2k/(k-1)}  \cdot x, \, x\in \Eb_\delta \},\\
\cEb_\delta =& \cUb_\delta \times\cTb_\delta.
\end{aligned}
\end{equation}
Since $E_0$ is bounded, we can define finite constants $R_0 = \sup\{\l x \l: x \in E_0 \}$, $U_0 = 2 \sup\{\l \sqrt{2k/(k-1)}  \cdot x \l: x\in E_0 \}$ and $T_0 = 2 \sup\{ \l \sqrt{2k(k-1)} \cdot \lambda m^{k-2} (1-m^2) \l: m \in M \}$. Therefore, as $\delta$ is sufficiently small, we have $\cUb_\delta \subset [-U_0, U_0]$ and $\cTb_\delta \subset [-T_0, T_0]$. 

We only prove the case for $(M,E_0)$ such that $\sup_{ (\theta,t) \in \cEb_0}  [\Phi_\star(t) - L(\theta,t)]  > -\infty$. For $(M, E_0)$ such that $\sup_{ (\theta,t) \in \cEb_0}  [\Phi_\star(t) - L(\theta,t)]  = -\infty$, we can prove it using similar arguments.

According to Proposition \ref{prop:local_maxima}.(b), for any $\eps > 0$ and $\delta > 0$, there exists $N_{\eps, \delta}$ large enough, such that $t_n(x) \in \cTb_\delta$ and $\theta_n(m) \in \cUb_\delta$ for all $(m,x) \in M \times E_0$, and for all $n \geq N_{\eps, \delta}$,
\begin{equation}
\begin{aligned}
&\sup_{m \in \Mb, x \in \Eb_0}\E\{ \l \det(\theta_n(m) \cdot \be_1 \be_1^\sT + W_{n-1} - t_n(x) \cdot \id_{n-1}) \l \cdot \ones\{H_n \preceq 0\}\} \\
\leq& \sup_{(\theta, t) \in \cEb_\delta}\E\{ \l \det(\He_{n-1}) \l \cdot \ones\{ \He_{n-1} \preceq 0 \} \} \leq \exp\{ (n-1) [\sup_{(\theta,t) \in \cEb_\delta}\Phi_\star(t) - L(\theta,t)+ \eps]  \} 
\end{aligned}
\end{equation}
Therefore, using Eq. (\ref{eqn:cp_st}) in Lemma \ref{lem:exact}, we have
\begin{equation}
\begin{aligned}
&\E\{\Crt_{n, \knot}(M,E_0) \} \\
\leq& \sup_{m \in M, x \in E_0} 2 \PC_n \cdot R_0 \times \exp\Big\{n\Big[ \frac{1}{2} (\log(k-1) + 1) - k \lambda^2 m^{2k-2} (1-m^2)  - (x - \lambda m^k)^2 \Big]\Big\}  \\
& \times \exp\Big\{(n-3)\Big[\frac{1}{2} \log(1-m^2) \Big] + (n-1) \cdot \sup_{(\theta,t) \in \cEb_\delta} [\Phi_\star(t) - L(\theta,t)+ \eps]\Big\}.
\end{aligned}
\end{equation}
Note that the pre-constant $2 \PC_n R_0$ is exponentially trivial. We have
\begin{equation}
\begin{aligned}
&\limsup_{n\rightarrow \infty}\frac{1}{n}\log \E\{\Crt_{n, \knot}(M,E_0) \} \\
\leq &\sup_{m \in \Mb, x \in \Eb_0} \frac{1}{2}(\log(k-1)+1) + \frac{1}{2}\log(1-m^2) - k \lambda^2 m^{2k-2} (1-m^2) - (x- \lambda m^k) ^2 \\
&+ \sup_{(\theta,t) \in \cEb_\delta} \Phi_\star(t) - L(\theta,t) + \eps.
\end{aligned}
\end{equation}
Letting $\eps, \delta \rightarrow 0_+$, and using the upper semi-continuity of $\Phi_\star(t) - L(\theta,t)$ and compactness of $\cEb_0$, we have
\begin{equation}
\begin{aligned}
&\limsup_{n\rightarrow \infty}\frac{1}{n}\log \E\{\Crt_{n, \knot}(M,E_0) \} \\
\leq &\sup_{m \in \Mb, x \in \Eb_0} \frac{1}{2}(\log(k-1)+1) + \frac{1}{2}\log(1-m^2) - k \lambda^2 m^{2k-2} (1-m^2) - (x- \lambda m^k) ^2 \\
&+ \sup_{(\theta,t) \in \cEb_0} \Phi_\star(t) - L(\theta,t).
\end{aligned}
\end{equation}
Note that we took $E_0 = (x_0 - \delta_0, x_0 + \delta_0)$, letting $\delta_0 \rightarrow 0$ and using the upper semi-continuity of $\Phi_\star(t) - L(\theta,t)$ gives Eq. (\ref{eqn:Thm2UpperFirst}). 

\noindent
{\bf Step 2. Lower bound. }

Suffice to consider the case when $\sup_{(m,x) \in M^o \times E^o} S_\knot(m,x) > -\infty$, otherwise the inequality holds trivially. 

For any Borel sets $M \subset [-1,1]$ and $E \subset \R$, and for any $\eps > 0$, there exists $(m_0, x_0) \in M^o \times E^o$ such that 
\begin{align}
S_\knot(m_0, x_0) \ge \sup_{(m,x) \in M^o \times E^o} S_\knot(m,x) - \eps.
\end{align}
For this choice of $(m_0, x_0)$, denote $\theta_0 = \theta(m_0)$ and $t_0 = t(x_0)$. For a given small $\delta > 0$, define 
\begin{equation}
\begin{aligned}
M_0^\delta \eqndef& (m_0 - \delta, m_0 + \delta),\\
E_0^\delta \eqndef& (x_0 - \delta, x_0 + \delta),\\
\cB_0^\delta \eqndef& M_0^\delta \times E_0^\delta,\\
\cUo_{n}^\delta \eqndef&\{ \theta: \theta = \sqrt{2k(k-1)n/(n-1)}  \cdot \lambda m^{k-2} (1-m^2), \, m \in M_0^\delta  \},\\
\cTo_{n}^\delta \eqndef & \{ t: t = \sqrt{2kn/((k-1)(n-1))} \cdot x, \, x \in E_0^\delta \},\\
\cEo_n^\delta \eqndef& \cUo_n^\delta \times \cTo_n^\delta.
\end{aligned}
\end{equation}
We fix $\delta$ sufficiently small, so that $M_n^\delta \subset M^o$ and $E_n^\delta \subset E^o$. 

For this choice of $\delta$ and $\eps$, according to Proposition \ref{prop:local_maxima}.(c), for any $\eps_0 > 0$, we can find $N_{\eps, \eps_0, \delta}$ and $\delta_0 > 0$ such that as $n \geq N_{\eps, \eps_0, \delta}$, 
\begin{equation}
\begin{aligned}
\cEo_{0}^{\delta_0} \eqndef&~ (\theta_0 - \delta_0, \theta_0 + \delta_0) \times (t_0 - \delta_0, t_0 + \delta_0) \subset  \cEo_n^\delta,\\
\end{aligned}
\end{equation}
and 
\begin{equation}
\begin{aligned}
 \int_{(\theta, t) \in \cEo_{0}^{\delta_0}} \E\{\l \det(\He_{n-1} ) \l \cdot \ones\{ \He_{n-1} \preceq 0 \} \} \d \theta \d t \geq \exp\{ (n-1) [\Phi(t_0)- L(\theta_0, t_0) - \eps_0]\}.
\end{aligned}
\end{equation}

According to the expression for the expected number of critical point as in Eq. (\ref{eqn:lm_st}) in Lemma \ref{lem:exact}, 
\begin{equation}
\begin{aligned}
&\E\{\Crt_{n, \knot}(M,E) \}  \geq \E\{\Crt_{n, \knot}(M_0^\delta, E_0^\delta) \}\\
\ge & \PC_n \cdot \int_{\cB_0^\delta} \E \{ \l \det (H_n) \l \cdot \ones\{H_n \preceq 0\} \} \d x \d m \times \inf_{(m, x) \in \cB_0^\delta} \exp \Big\{ (n-3) \cdot \Big[ \frac{1}{2} \log(1-m^2)\Big] \\
& \quad \quad +n\Big[ \frac{1}{2} (\log(k-1) + 1) - k \lambda^2 m^{2k-2} (1-m^2)  - (x - \lambda m^k)^2 \Big]\Big\}  \\
\ge & \PC_n \cdot \int_{\cEo_{0}^{\delta_0}} \E \{ \l \det(\He_{n-1}) \l \cdot \ones\{\He_{n-1} \preceq 0\} \} \frac{n-1}{2k\lambda n [(k-2) \cdot m(\theta)^{k-3} - k\cdot m(\theta)^{k-1}]}\d \theta \d t\\
& \times \inf_{(m, x) \in \cB_0^\delta} \exp \Big\{ n\Big[ \frac{1}{2} (\log(k-1) + 1) +\frac{1}{2} \log(1-m^2) - k \lambda^2 m^{2k-2} (1-m^2)  - (x - \lambda m^k)^2 \Big]\Big\}  \\
\ge & \frac{\PC_n}{8k^2 \lambda} \cdot  \exp\Big\{ (n-1) \cdot [\Phi_\knot(t_0) - L(\theta_0, t_0) - \eps_0] \Big\}\\
& \times \inf_{(m, x) \in \cB_0^\delta} \exp \Big\{ n\Big[ \frac{1}{2} (\log(k-1) + 1) +\frac{1}{2} \log(1-m^2) - k \lambda^2 m^{2k-2} (1-m^2)  - (x - \lambda m^k)^2 \Big]\Big\}.
\end{aligned}
\end{equation}
Note that the pre-constant $\PC_n /8k^2$ is exponentially trivial on compact set. We have
\begin{equation}
\begin{aligned}
&\liminf_{n\rightarrow \infty}\frac{1}{n}\log \E\{\Crt_{n,  \knot}(M,E) \} \\
\geq &\inf_{(m,x) \in  \cB_0^\delta } \Big\{\frac{1}{2}(\log(k-1)+1) + \frac{1}{2}\log(1-m^2) - k \lambda^2 m^{2k-2} (1-m^2) - (x- \lambda m^k) ^2 \Big\} \\
&~~~~ +  \Phi_\star(t_0) -L(\theta_0, t_0)- \eps_0.
\end{aligned}
\end{equation}
Letting $\eps_0, \delta \rightarrow 0_+$, we have
\begin{align}
\liminf_{n\rightarrow \infty}\frac{1}{n}\log \E\{\Crt_{n, \knot}(M,E) \} \geq  S_\knot(m_0,x_0) \ge \sup_{m \in M^o, x \in E^o} S_\knot(m,x) - \eps.
\end{align}
Letting $\eps \rightarrow 0_+$ gives the desired result.

\end{proof}

For Proposition \ref{prop:local_maxima}, the exponential tightness is trivial since we have the exponential tightness of the expected number of critical points. In the following, we will prove the upper bound and the lower bound.

\subsubsection{Part (1). Upper bound}

We decompose 
\begin{equation}
\begin{aligned}
&\E\{\l \det(\He_n) \l \cdot \ones\{ \He_n \preceq 0 \}\}\\
 \leq & \underbrace{\E\{\l \det(\He_n) \l \cdot \ones\{ \He_n \preceq 0 \}; L_n \in \ball(\sigma_{\rm sc},\delta) \}}_{F_1} + \underbrace{\E\{\l \det(\He_n) \l, L_n \notin \ball(\sigma_{\rm sc},\delta) \}}_{E_2} \\
\end{aligned}
\end{equation}
where $\delta >0 $ is an arbitrary small number. 

According to Lemma \ref{lem:exp_vanishing}, $E_2 = B_n^3 / A_n^2$ as a function of $(\theta, t)$ is exponentially vanishing on compact set. We just need to consider the term $F_1$.

%
%
%
%

For the term $F_1$, we have
\begin{equation}
\begin{aligned}
F_1 =& \frac{Z_n^0}{Z_{n}^\theta} \int_{\R^n}\exp\{ n\cdot \Phi(L_n, t) \} \cdot \ones\{ \max_{i\in [n]}\{x_i\} \leq t , L_n \in \ball(\sigma_{\rm sc}, \delta)\} I_n(\theta, x_1^n) \cdot \d \P_n^0 \\
\leq & \exp\{ n\cdot \sup_{\mu \in \ball(\sigma_{\rm sc}, \delta)} \Phi(\mu, t)\} \cdot \frac{Z_n^0}{Z_{n}^\theta} \int_{\R^n}  \ones\{ \max_{i\in [n]}\{x_i\} \leq t\} I_n(\theta, x_1^n) \cdot \d \P_n^0 \\
=&  \exp\{ n\cdot \sup_{\mu \in \ball(\sigma_{\rm sc}, \delta)} \Phi(\mu, t)\} \cdot \P(\lambda_{\max}(X_n) \leq t). 
\end{aligned}
\end{equation} 

According to Lemma \ref{lem:LDP_spiked}, and note that $\P(\lambda_{\max} (X_n) \leq t)$ is a coordinate-wise monotone function with respect to $(\theta,t)$,  we have
\begin{align}
\limsup_{n\rightarrow \infty} \sup_{(\theta, t) \in \cEb_0} \frac{1}{n} \log \P( \lambda_{\max}(X_n) \leq t) \leq - \inf_{(\theta, t) \in \cEb_0} L(\theta,t).
\end{align}
Consequently, 
\begin{align}
\lim_{\delta\rightarrow 0_+} \limsup_{n \rightarrow \infty}  \sup_{ (\theta, t) \in \cEb_0}  \frac{1}{n} \log F_1   \leq \lim_{\delta\rightarrow 0_+} \sup_{ (\theta,t) \in \cEb_0,\mu \in \ball(\sigma_{\rm sc},\delta)} [\Phi(\mu, t)    - L(\theta,t)]\leq \sup_{(\theta,t) \in \cEb_0}[ \Phi_\star(t) - L(\theta,t)].
\end{align}
Therefore, we have
\begin{align}
 \limsup_{n \rightarrow \infty}  \sup_{ (\theta, t) \in \cEb_0}  \frac{1}{n} \log \E\{\l\det(\He_n)\l \cdot \ones\{\He_n \preceq 0\}\}   \leq \sup_{(\theta,t) \in \cEb_0}[ \Phi_\star(t) - L(\theta,t)].
\end{align}

\subsubsection{Part (2). Lower bound}

For the lower bound, since $\Phi_\star (t) - L(\theta, t)$ is upper semi-continuous, we only need to prove it for those $(\theta_0, t_0)$ in a dense subset of $\R^2$.
Since as $t_0 \in (-\infty, 2)$, we have $L(\theta, t_0) = \infty$ for any $\theta$. So we only need to consider the case when $t_0 > 2 $. 

Fix $t_0 > 2$, choose $\delta_0 > 0$ and $\eps_0 > 0$ such that $\delta_0 < \delta$, and $2 <  t_0 - \delta_0 - \eps_0 < t_0 - \delta_0$. We have
\begin{equation}
\begin{aligned}
&\E\{ \l \det(\He_n) \l \cdot \ones\{\He_n \preceq 0\}\}\\
=&\frac{ Z_{n}^0}{Z_{n}^\theta} \int_{\R^n} \exp\{n \cdot \Phi(L_n, t)  \}  \cdot \ones\{ x_{\max} \le t\}\cdot I_n(\theta, x_1^n)   \cdot \d \P_{n}^0(x_1^n)\\
\geq&\frac{ Z_{n}^0}{Z_{n}^\theta} \int_{\R^n} \exp\{n \cdot \Phi(L_n, t)  \}  \cdot I_n(\theta, x_1^n)   \cdot \ones\{ x_{\max} \leq \min \{ t, t_0 - \delta_0 - \eps_0\}, L_n \in \ball(\sigma_{\rm sc}, \delta') \}  \cdot \d \P_{n}^0\\
\geq&\frac{ Z_{n}^0}{Z_{n}^\theta} \int_{\R^n} I_n(\theta, x_1^n)\ones\{x_{\max} \leq \min \{ t, t_0 - \delta_0 - \eps_0\}, L_{n} \in \ball(\sigma_{\rm sc}, \delta') \} \cdot  \d \P_{n}^0 \\
&\times \exp\Big\{n \Big[ \inf_{L_n \in \ball(\sigma_{\rm sc}, \delta'), x_{\max} \leq \min\{t, t_0 - \delta_0 - \eps_0\}}\Phi(L_n, t) \Big] \Big\}\\
\ge &\Big\{\P(\lambda_{\max}(X_n) \leq \min \{ t, t_0 - \delta_0 - \eps_0\}) - \underbrace{\frac{ Z_{n}^0}{Z_{n}^\theta} \int_{\R^n} I_n(\theta, x_1^n) \ones\{L_{n} \notin \ball(\sigma_{\rm sc}, \delta')\}  \} \d \P_{n}^0}_{G_2}\Big\} \\ 
&\times \exp\{n [ \inf_{L_n \in \ball(\sigma_{\rm sc}, \delta'), x_{\max} \leq \min\{ t, t_0 - \delta_0 - \eps_0\}}\Phi(L_n, t) ] \}\\
\end{aligned}
\end{equation}
According to Lemma \ref{lem:exp_vanishing}, $G_2 = B_n^2/A_n^2$ is exponential vanishing on compact set, so we can drop this term. 

According to Lemma \ref{lem:LDP_spiked}, and note that $\P(\lambda_{\max} (X_n) \leq t)$ is a coordinate-wise monotone function with respect to $(\theta,t)$ and $L(\theta,t)$ are continuous as $t > 2$,  we have
\begin{align}
\liminf_{n\rightarrow \infty} \inf_{(\theta, t) \in \cEo_0^{\delta_0}} \frac{1}{n} \log  \P( \lambda_{\max}(X_n) < \min \{ t, t_0 - \delta_0 - \eps_0\}) \geq - L(\theta_0 + \delta_0, t_0 - \delta_0 - \eps_0).
\end{align}

This gives
\begin{equation}
\begin{aligned}
& \liminf_{n\rightarrow \infty} \frac{1}{n} \log \int_{(\theta, t) \in \cEo_0^\delta}  \E\{ \l \det(\He_n) \l \cdot \ones\{\He_n \preceq 0\}\}  \d \theta \d t\\
\geq& \lim_{\delta' \rightarrow 0_+}\inf_{t \in \cTo_0^{\delta_0}, L_n \in \ball(\sigma_{\rm sc},\delta'), \lambda_{\max} \leq t_0 - \delta_0 - \eps_0} \Phi(L_n,t)  - L(\theta_0 + \delta_0, t_0 - \delta_0 - \eps_0)\\
= & \Phi_\star(t_0 + \delta_0 /2)  - L(\theta_0 + \delta_0, t_0 - \delta_0 - \eps_0)\\
\end{aligned}
\end{equation}
Since $\Phi_\star (t) - L(\theta, t)$ is continuous with respect to $(\theta, t)$ on $\R \times (2, \infty)$, letting first $\eps_0 \rightarrow 0_+$ and then $\delta_0 \rightarrow 0_+$, we have 
\begin{equation}
\begin{aligned}
&\liminf_{n\rightarrow \infty} \frac{1}{n} \log \int_{(\theta, t) \in \cEo_0^\delta}  \E\{ \l \det(\He_n) \l \cdot \ones\{\He_n \preceq 0\}\}  \d \theta \d t\\
\ge & \limsup_{\delta_0 \rightarrow 0_+ } \limsup_{\eps_0 \rightarrow 0_+} \Phi_\star(t_0 + \delta_0 /2)  - L(\theta_0 + \delta_0, t_0 - \delta_0 - \eps_0) =  \Phi_\star (t_0) - L(\theta_0, t_0).
\end{aligned}
\end{equation}

\section*{Acknowledgements}

S.M. was supported by an Office of Technology Licensing Stanford Graduate Fellowship.
A. M. was partially supported by grants NSF CCF-1714305 and NSF
IIS-1741162. M. N. was supported by a Postdoctoral Fellowship from the Natural Sciences and Engineering Research Council of Canada.

\appendix

\section{Technical lemmas}

\begin{lemma}\label{lem:cvx_uniform}
Let $\{ f_n(x) \}_{n \in \N_+}$ be a series of real valued functions defined on the same compact interval $[a, b]$. Suppose that each of the $f_n(x)$ are convex, and $\{f_n(x)\}_{n \in \N_+}$ are uniformly bounded. Then we have 
\begin{align}
\liminf_{n\rightarrow \infty} \inf_{x \in [a, b]}f_n(x) = \inf_{x \in [a,b]} \liminf_{n\rightarrow \infty} f_n(x). 
\end{align}
\end{lemma}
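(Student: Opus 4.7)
The plan is to prove Lemma \ref{lem:cvx_uniform}. The inequality $\liminf_{n\to\infty} \inf_{x\in [a,b]} f_n(x) \le \inf_{x\in[a,b]} \liminf_{n\to\infty} f_n(x)$ is automatic: for each fixed $x$, $\inf_y f_n(y) \le f_n(x)$ for all $n$, so $\liminf_n \inf_y f_n(y) \le \liminf_n f_n(x)$, and taking the infimum over $x$ on the right preserves the bound. The task therefore reduces to the reverse inequality, where convexity (beyond mere lower semicontinuity) is really only needed in order to handle the endpoints of $[a,b]$.

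Let $M = \sup_n \sup_{x\in[a,b]} |f_n(x)|$, which is finite by hypothesis. Arguing by contradiction, suppose $\gamma := \liminf_n \inf_x f_n(x) < \beta := \inf_x \liminf_n f_n(x)$. Extract a subsequence $\{n_k\}$ along which $\inf_x f_{n_k}(x)\to \gamma$, and pick $x_{n_k}\in[a,b]$ with $f_{n_k}(x_{n_k})\le \inf_x f_{n_k}(x)+1/k$, so that $f_{n_k}(x_{n_k})\to\gamma$; by compactness of $[a,b]$ pass to a further subsequence (not relabeled) with $x_{n_k}\to x^*\in[a,b]$.

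First I would dispose of the interior case $x^*\in(a,b)$ using the standard fact that a uniformly bounded family of convex functions on $[a,b]$ is uniformly Lipschitz on every compact subinterval of $(a,b)$, with a Lipschitz constant controlled by $M$ and the distance to the endpoints. Applying this on a closed neighborhood of $x^*$ contained in $(a,b)$ yields $|f_{n_k}(x^*)-f_{n_k}(x_{n_k})|\le L|x^*-x_{n_k}|\to 0$, hence $\liminf_n f_n(x^*)\le \liminf_k f_{n_k}(x^*) = \gamma < \beta$, contradicting $\liminf_n f_n(x^*)\ge \beta$.

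The main obstacle is the boundary case, say $x^*=a$ (the case $x^*=b$ is symmetric), since convex functions on $[a,b]$ need not be continuous at $a$. I would handle it by a convex-combination argument: fix some $y^*\in(a,b)$ and, for each $y\in(a,y^*)$, note that $x_{n_k}<y<y^*$ for all $k$ sufficiently large, so writing $y$ as a convex combination of $x_{n_k}$ and $y^*$, convexity of $f_{n_k}$ gives
\begin{equation*}
f_{n_k}(y) \le \frac{y^*-y}{y^*-x_{n_k}}\, f_{n_k}(x_{n_k}) + \frac{y-x_{n_k}}{y^*-x_{n_k}}\, f_{n_k}(y^*) \le \frac{y^*-y}{y^*-x_{n_k}}\, f_{n_k}(x_{n_k}) + \frac{y-x_{n_k}}{y^*-x_{n_k}}\, M.
\end{equation*}
Passing to $\liminf_k$ and using $x_{n_k}\to a$ together with $f_{n_k}(x_{n_k})\to\gamma$, I obtain $\liminf_n f_n(y) \le \liminf_k f_{n_k}(y) \le \frac{y^*-y}{y^*-a}\,\gamma + \frac{y-a}{y^*-a}\,M$. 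Since $\beta\le \liminf_n f_n(y)$ for every $y\in[a,b]$, sending $y\to a^+$ with $y^*$ fixed collapses the right-hand side to $\gamma$, giving $\beta\le\gamma$ and the desired contradiction. This completes the proof.
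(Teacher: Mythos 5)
Your proof is correct, and its overall architecture matches the paper's: establish the trivial inequality, then argue by contradiction, extracting a subsequence of near-minimizers $x_{n_k}$ accumulating at some $x^*$ and using convexity together with the uniform bound to propagate the anomalously low values $f_{n_k}(x_{n_k})\to\gamma$ to a \emph{fixed} point, contradicting the definition of $\beta=\inf_x\liminf_n f_n(x)$. The difference lies in how the propagation is carried out. The paper bounds $f_{n_k}(x_\star)$ directly by writing $x_\star$ as a convex combination of $x_{n_k}$ and an endpoint, restricting $x_{n_k}$ to a small interval $\cI$ around $x_\star$ so that the weight on the endpoint value is small. You instead split into cases: for interior $x^*$ you invoke the uniform local Lipschitz bound for bounded convex families, and for $x^*$ an endpoint you transfer the low value to interior points $y$ near $x^*$ (rather than to $x^*$ itself) and then let $y\to a^+$. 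Your boundary treatment is in fact the more robust of the two: a convex function on $[a,b]$ may jump up at $a$, so $f_{n_k}(a)$ is not controlled from above via convexity when $x_{n_k}>a$, and the paper's interval $\cI$ degenerates to the single point $\{a\}$ when $x_\star=a$, so its assertion that $x_{n_k}\in\cI$ for large $k$ can fail; your convex-combination bound at interior $y$ followed by $y\to a^+$ closes exactly that case. In short: same strategy, slightly different execution of the convexity step, with your version handling the endpoints more carefully.
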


\begin{proof}
It is obvious that the left hand side is smaller or equal to the right hand side. Suffice to prove that the left hand side is bigger or equal to the right hand side. 

Proof by contradiction. Assume the left hand side is smaller than the right hand side by a margin $\eps$. We call the righthand side $f_\star$. Then we have an increasing sequence $n_k \in \N_+$, such that 
\begin{align}
f_n(x_{n_k}) \le f_\star - \eps. 
\end{align}
The sequence $x_{n_k}$ have an accumulation point $x_\star \in [a,b]$. Since $f_n(x)$ are uniformly bounded, let $\sup_{x \in [a, b], n \in \N_+} f_n(x) - f_\star \le U$. Consider the interval $\cI = [x_\star - (b - x_\star)\eps/(2U+\eps), x_\star + (x_\star - a)\eps / (2U + \eps)]$. Since $\lim_{k\rightarrow \infty} x_{n_k} = x_\star$, then there exists $K$ large enough such that as $k \ge K$, we have $x_{n_k} \in \cI$. For any $x \in \cI \cap [x_\star, b]$, because of the convexity of $f_n$, we have
\begin{align}
f_n(x_\star) \le (x-x_\star)/(x - a) \cdot f_n(a) + (x_\star - a)/(x - a) \cdot f_n(x). 
\end{align}
For $k$ such that $x_{n_k} \in \cI \cap [x_\star, b]$, we have 
\begin{equation}
\begin{aligned}
f_{n_k}(x_\star) \le& (x_{n_k} - x_\star)/(x_{n_k} - a) \cdot (f_\star + U) + (x_\star - a) / (x_{n_k} - a) \cdot (f_\star - \eps)\\
\le& f_\star + [(x_{n_k} - x_\star) U - (x_\star - a) \eps]/(x_{n_k} - a)\\
\le & f_\star + [U (x_\star - a)\eps / (2U + \eps) - (x_\star - a) \eps]/((x_\star - a)\eps / (2U + \eps) + x_\star - a)\\
\le & f_\star + \eps [U / (2U + \eps) - 1]/[\eps / (2U + \eps) + 1] \le f_\star - \eps / 2.\\
\end{aligned}
\end{equation}
Similarly, for $k$ such that $x_{n_k} \in \cI \cap [a, x_\star]$, we also have $f_{n_k}(x_\star) \le f_\star - \eps/2$. Therefore
\begin{align}
\liminf_{n\rightarrow \infty} f_n(x_\star) \le \liminf_{k\rightarrow \infty} f_{n_k}(x_\star) \le f_\star - \eps/2,
\end{align}
which contradict the definition of $f_\star$. 
 
\end{proof}

The following lemma is from \cite[Lemma 6.3.]{arous2001aging}. 

\begin{lemma}[Concentration of operator norm of GOE matrix.] \label{lem:concentration_goe}
Let $\bW_N \sim \GOE(n)$.  Then there exists a constant $t_0$ such that, for all $t\ge t_0$ and all $n$ large enough, we have
\begin{align}\label{eqn:concentration_norm_goe}
\P(\dl \bW_n \dl_{\op} \geq t) \leq \exp\{ - n t^2/9 \}.
\end{align}
\end{lemma}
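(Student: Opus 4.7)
The plan is to combine a covering net for the sphere with the Gaussian tail bound applied to a fixed quadratic form, then take a union bound. First I would use the variational identity $\dl \bW_n \dl_{\op} = \sup_{u \in \S^{n-1}} |\< u, \bW_n u\>|$. A standard net argument shows that if $N_\eps$ is an $\eps$-net of $\S^{n-1}$ in Euclidean distance, then for any symmetric matrix one has
\begin{align*}
\dl \bW_n \dl_{\op} \leq (1-2\eps)^{-1} \max_{v \in N_\eps} |\< v, \bW_n v\>|,
\end{align*}
and one may choose such a net with $|N_\eps| \leq (1+2/\eps)^n$.

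Next, I would verify that for each fixed $v \in \S^{n-1}$, the random variable $\< v, \bW_n v \> = \sum_i W_{ii} v_i^2 + 2 \sum_{i<j} W_{ij} v_i v_j$ is centered Gaussian with variance
\begin{align*}
\tfrac{2}{n} \sum_i v_i^4 + \tfrac{4}{n} \sum_{i<j} v_i^2 v_j^2 = \tfrac{2}{n}\Bigl(\sum_i v_i^2\Bigr)^2 = \tfrac{2}{n},
\end{align*}
using the GOE normalization (diagonal variance $2/n$, off-diagonal $1/n$). Consequently $\P(|\< v, \bW_n v\>| \geq s) \leq 2 \exp(-ns^2/4)$.

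Fixing a convenient value such as $\eps = 1/10$, so that $(1-2\eps)^{-1} = 5/4$ and $|N_\eps| \leq 21^n$, a union bound yields
\begin{align*}
\P(\dl \bW_n \dl_{\op} \geq t) \leq 2 \cdot 21^n \cdot \exp(-4nt^2/25).
\end{align*}
Comparing exponents, the right-hand side is bounded by $\exp(-nt^2/9)$ for all $n$ sufficiently large as soon as $\log 21 < (4/25 - 1/9) t^2 = 11 t^2/225$, i.e.\ for $t \geq t_0$ with $t_0$ some absolute constant (around $8$). This gives the desired inequality \eqref{eqn:concentration_norm_goe}.

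This argument is entirely standard, so I do not expect a genuine obstacle. The only minor subtlety is the choice of the net parameter $\eps$: one must take $\eps$ small enough that the exponent $4/(1-2\eps)^{-2}$ coming from the Gaussian tail strictly exceeds $1/9$ after accounting for the logarithmic volume factor $\log(1+2/\eps)$; the choice $\eps = 1/10$ already gives slack with room to spare. (Alternatively, one could invoke the Borell–TIS inequality, noting that $\dl \cdot \dl_{\op}$ is $\sqrt{2/n}$-Lipschitz in the independent Gaussian entries and that $\E \dl \bW_n \dl_{\op} \to 2$, but the net argument is the most self-contained.)
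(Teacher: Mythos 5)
Your proof is correct, and it is worth noting that the paper itself does not prove this lemma at all: it is imported verbatim as a citation of \cite[Lemma 6.3]{arous2001aging}. So your $\eps$-net argument is a genuinely self-contained alternative to what the paper does. The computations check out: with the paper's normalization (diagonal variance $2/n$, off-diagonal $1/n$) the quadratic form $\<v,\bW_n v\>$ at a fixed unit vector is indeed $\normal(0,2/n)$, giving the tail $2\exp(-ns^2/4)$; the standard net inequality $\dl \bW_n\dl_{\op}\le (1-2\eps)^{-1}\max_{v\in N_\eps}|\<v,\bW_n v\>|$ with $\eps=1/10$ and $|N_\eps|\le 21^n$ then yields $2\cdot 21^n\exp(-4nt^2/25)$, and since $4/25-1/9=11/225>0$ the comparison with $\exp(-nt^2/9)$ holds once $t^2\ge 225(\log 21+1)/11$, i.e.\ uniformly in $n$ for $t\ge t_0\approx 8$. (This is in fact slightly stronger than the statement, which only asks for $n$ large.) What each route buys: the citation route inherits sharper constants (the BDG-style argument, via concentration of Lipschitz functions of Gaussians around $\E\dl\bW_n\dl_{\op}\to 2$, gives a bound of the form $\exp\{-n(t-2)^2/C\}$ valid for moderate $t$), whereas your argument is elementary and self-contained but only kicks in at a large absolute constant $t_0$ — which is all that the application in the proof of exponential tightness requires, since there one only needs $\E\{e^{nY_n}\}\le C^n$.
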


\begin{lemma}\label{lem:exp_tight_bound}
We have 
\begin{align}
\lim_{z\rightarrow \infty}\lim_{n\rightarrow \infty} \frac{1}{n}\log \int_{z}^\infty  x^n \exp\{ -n x^2 \}\d x = - \infty,
\end{align}
\end{lemma}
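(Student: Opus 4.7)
The plan is to reduce this to a standard Laplace-type/Gaussian tail estimate by absorbing the polynomial factor $x^n$ into the Gaussian. Writing the integrand as $\exp\{n(\log x - x^2)\}$, the key observation is that the logarithm is eventually dominated by the quadratic: for any $\eps>0$ there exists $x_0 = x_0(\eps)$ such that $\log x \le \eps x^2$ whenever $x \ge x_0$. Choosing $\eps = 1/2$, this yields the pointwise bound
\[
x^n \exp\{-nx^2\} \le \exp\{-nx^2/2\} \qquad \text{for all } x \ge x_0.
\]

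With this uniform pointwise bound in hand, for every $z \ge x_0$ I would invoke the elementary Gaussian tail estimate $\int_z^\infty \exp\{-nx^2/2\}\,\d x \le (nz)^{-1}\exp\{-nz^2/2\}$ (obtained by comparing with $(x/z)\exp\{-nx^2/2\}$ and integrating explicitly). This immediately gives
\[
\int_z^\infty x^n \exp\{-nx^2\}\,\d x \;\le\; \frac{1}{nz}\exp\{-nz^2/2\}.
\]
Taking logarithms, dividing by $n$, and sending $n\to\infty$ produces
\[
\limsup_{n\to\infty} \frac{1}{n}\log \int_z^\infty x^n \exp\{-nx^2\}\,\d x \;\le\; -\frac{z^2}{2},
\]
and then letting $z\to\infty$ gives the claimed limit $-\infty$.

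There is no real obstacle here: the computation is a textbook Laplace argument, and the only minor care needed is to verify the elementary inequality $\log x \le x^2/2$ on $[x_0,\infty)$ (which follows since $\phi(x) = x^2/2 - \log x$ satisfies $\phi'(x) = x - 1/x > 0$ for $x > 1$ and $\phi(x) \to \infty$). The estimate is clearly far from tight --- the true leading order is governed by $\phi(x) = \log x - x^2$ at $x = z$ --- but for the purposes of this lemma any bound going to $-\infty$ with $z$ suffices.
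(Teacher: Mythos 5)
Your argument is correct and is essentially the paper's own proof: both absorb the polynomial factor via $\log x \le x^2/2$ for large $x$ and then bound the Gaussian tail by comparison with $x\exp\{-nx^2/2\}$ (your extra factor $1/z$ is immaterial). No gaps.
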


\begin{proof}
For large enough $x$, we have $x^2/2 \leq x^2 - \log x$. Therefore, for large enough $z$, the following holds
\begin{equation}
\begin{aligned}
\int_{z}^\infty  x^n \exp\{ -n x^2 \}\d x =& \int_z^\infty \exp\{ -n (x^2 - \log x) \}\d x \leq  \int_z^\infty  \exp\{ -n x^2/2 \}\d x \\
\leq & \int_z^\infty x\cdot \exp\{ -n x^2/2 \}\d x = \frac{1}{n} \exp\{ -nz^2/2 \}.
\end{aligned}
\end{equation}
This proves the claim. 
\end{proof}

\begin{lemma}\label{lem:exp_vanishing}
For the following quantities as functions of $(\theta,t)$, we have $A_n^1$, $A_n^2$, and $A_n^3$ are exponentially finite on any compact set, and $B_n^1$, $B_n^2$, and $B_n^3$ are exponentially vanishing on any compact set.
\begin{equation}
\begin{aligned}
&A_n^1 = \int_{\R^n} \exp\{ n \Phi(L_n, t) \} \d \P_n^0, &~~ &B_n^1 = \int_{\R^n} \exp\{ n \Phi(L_n, t) \} \ones\{ L_n \notin \ball(\sigma_{\rm sc}, \delta) \} \d \P_n^0,\\
&A_n^2 = \int_{\R^n} I_n(\theta, x_1^n) \d \P_n^0, &~~ &B_n^2 = \int_{\R^n} I_n(\theta, x_1^n) \ones\{ L_n \notin \ball(\sigma_{\rm sc}, \delta) \} \d \P_n^0,\\
&A_n^3 = \int_{\R^n} \exp\{ n \Phi(L_n, t) \} I_n(\theta, x_1^n) \d \P_n^0, &~~ &B_n^3 = \int_{\R^n} \exp\{ n \Phi(L_n, t) \} I_n(\theta, x_1^n)  \ones\{ L_n \notin \ball(\sigma_{\rm sc}, \delta) \} \d \P_n^0.\\
\end{aligned}
\end{equation}
\end{lemma}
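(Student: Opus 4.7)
The plan has two parts: the $A_n^i$ claims follow from deterministic pointwise estimates on the integrands combined with moment bounds for the GOE operator norm, while the $B_n^i$ claims combine the Ben Arous--Guionnet large-deviations principle (LDP) for the empirical spectral distribution at speed $n^2$ with an operator-norm truncation. Throughout, the integrals are against $\d \P_n^0$, i.e. expectations under the GOE ensemble.

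For the first part I will record the pointwise bounds
\begin{align*}
\exp\{n\Phi(L_n, t)\} = \prod_{i=1}^n |x_i - t| \leq \big(\|\bW_n\|_{\op} + |t|\big)^n, \qquad I_n(\theta, x_1^n) \leq \exp\Big\{\tfrac{n|\theta|}{2}\|\bW_n\|_{\op}\Big\}.
\end{align*}
The first is elementary, and the second follows immediately from the definition of the spherical integral since $(\bU \diag(x_1^n) \bU^\sT)_{11} = \sum_i U_{1i}^2 x_i$ is bounded in absolute value by $\|\bW_n\|_{\op}$ for any orthogonal $\bU$. Combined with the sub-Gaussian tails for $\|\bW_n\|_{\op}$ provided by Lemma \ref{lem:concentration_goe}, these yield exponential moments $\E[\exp\{c n \|\bW_n\|_{\op}\}] \leq e^{Cn}$ and polynomial moments $\E[\|\bW_n\|_{\op}^n] \leq e^{Cn}$ uniformly in $(\theta,t)$ on any compact set. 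Specializing to each of $A_n^1, A_n^2, A_n^3$ (using Cauchy--Schwarz to handle the product for $A_n^3$) gives the exponential-finiteness claims.

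For the $B_n^i$ terms, the essential new input is the Ben Arous--Guionnet LDP for the GOE empirical spectral measure, which provides $\P(L_n \notin \ball(\sigma_{\rm sc}, \delta)) \leq \exp\{-c(\delta)\, n^2\}$ for some $c(\delta)>0$ and all $n$ large enough. The difficulty is that the integrand is only controlled by a factor of order $\exp\{C n \|\bW_n\|_{\op}\}$, which grows linearly in $n$ but with an unbounded prefactor, and one has to exploit the $n^2$ speed of the LDP against this rate-$n$ growth. I will truncate at a level $R_n \to \infty$ with $R_n = o(n)$ (for concreteness $R_n = \sqrt{n}$) and split
\begin{align*}
\ones\{L_n \notin \ball(\sigma_{\rm sc}, \delta)\} \leq \ones\{L_n \notin \ball(\sigma_{\rm sc}, \delta),\, \|\bW_n\|_{\op} \leq R_n\} + \ones\{\|\bW_n\|_{\op} > R_n\}.
\end{align*}
On the first piece, the integrand is at most $\exp\{C n R_n\} = \exp\{C n^{3/2}\}$ while the indicator has expectation $\leq \exp\{-c(\delta) n^2\}$, so the contribution has rate $\to -\infty$. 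On the second piece, Cauchy--Schwarz gives $\E[(\text{integrand})^2]^{1/2} \cdot \P(\|\bW_n\|_{\op} > R_n)^{1/2} \leq e^{Cn} \cdot \exp\{-n R_n^2 / 18\} = \exp\{Cn - n^2/18\}$, using the bounds from the first part together with Lemma \ref{lem:concentration_goe}.

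The main obstacle is the balance in this truncation: too small a cutoff $R_n$ lets the operator-norm tail contribute too much to the Cauchy--Schwarz estimate, while too large a cutoff overwhelms the $n^2$ gain from the LDP on the bounded-eigenvalue event. Any choice with $R_n \to \infty$ and $R_n = o(n)$ threads this needle and makes both pieces have rate $\to -\infty$ uniformly in $(\theta, t)$ on compact sets, yielding exponential vanishing of all three $B_n^i$.
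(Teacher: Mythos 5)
Your proof is correct and takes essentially the same route as the paper's: both arguments bound the integrands pointwise in terms of the GOE operator norm, split on whether that norm exceeds a cutoff, and play the speed-$n^2$ large deviation principle for the empirical spectral measure against the rate-$n$ growth of the integrand, with the sub-Gaussian operator-norm tail handling the complementary event. The only difference is cosmetic: the paper uses a fixed large cutoff $R$ (sending $R\to\infty$ after $n\to\infty$), whereas you use a growing cutoff $R_n=\sqrt{n}$ together with Cauchy--Schwarz on the tail piece.
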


\begin{proof}

We prove $B_n^3$ as an example. 
\begin{equation}
\begin{aligned}
B_n^3 \le & \int_{\R^n} \exp\{ n \Phi(L_n, t) \} I_n(\theta, x_1^n)  \ones\{ L_n \notin \ball(\sigma_{\rm sc}, \delta) \} \d \P_n^0\\
 =&  \underbrace{\int_{\R^n} \exp\{ n \Phi(L_n, t) \} I_n(\theta, x_1^n)  \ones\{  \max_{i \in [n]} \l x_i \l \ge R\} \d \P_n^0}_{E_1} \\
 &+ \underbrace{\int_{\R^n} \exp\{ n \Phi(L_n, t) \} I_n(\theta, x_1^n)  \ones\{ L_n \notin \ball(\sigma_{\rm sc}, \delta), \max_{i \in [n]} \l x_i \l \le R\} \d \P_n^0}_{E_2}.
\end{aligned}
\end{equation}

\noindent
{\bf Step 1. Bound for $E_1$. }

Let $\cUb_0 = [-U_0, U_0]$, $\cTb_0 = [-T_0, T_0]$ and $\cEb_0 = \cUb_0 \times \cTb_0$. Then
\begin{equation}
\begin{aligned}
&\lim_{R \rightarrow \infty} \limsup_{n\rightarrow \infty} \sup_{(\theta, t) \in \cEb_0} \frac{1}{n} \log E_1\\
\le&\lim_{R \rightarrow \infty} \limsup_{n\rightarrow \infty}\frac{1}{n} \log \E \{ (\dl W_n \dl_\op + T_0)^n \cdot \exp\{ U_0 \dl W_n \dl_{\op}\} ; \dl W_{n} \dl_{\op} \geq R \} = - \infty.
\end{aligned}
\end{equation}
For any $L$, we choose an $R > 0$ large enough such that 
\begin{align}
\limsup_{n\rightarrow \infty} \sup_{(\theta, t) \in \cEb_0} \frac{1}{n}\log  E_1 \leq L.
\end{align}

\noindent
{\bf Step 2. Bound for $E_2$: use the LDP of the empirical distribution of eigenvalues of GOE matrix. }

To bound $E_2$, we resort to the large deviation result for $L_n$:
\begin{align}
\lim_{n\rightarrow \infty}\frac{1}{n} \log \P_n^0(L_n \notin \ball(\sigma_{\rm sc},\delta)) = -\infty.
\end{align}
Therefore, we have upper bound
\begin{equation}
\begin{aligned}
\sup_{(\theta, t) \in \cEb_0}E_2  \leq& \P(L_n \notin \ball(\sigma_{\rm sc}, \delta)) \cdot \exp\{n [\log(R_0 + T_0) + U_0 R_0] \},
\end{aligned}
\end{equation}
which gives
\begin{align}
\lim_{n\rightarrow \infty}\sup_{(\theta, t) \in \cEb_0}\frac{1}{n} \log E_2 = -\infty.
\end{align}

Therefore, we have 
\begin{align}
\limsup_{n\rightarrow \infty}\sup_{(\theta, t) \in \cEb_0}\frac{1}{n} \log B_3^n \le L.
\end{align}
Sending $L \rightarrow -\infty$ gives the desired result.

\end{proof}

\section{Derivation of explicit formula for $S_\star$} \label{sec:calc_proofs}

\begin{proof} (Of Proposition \ref{prop:S_star_formula}) The function $S_{\star}(m,x)$ can be written as:
\begin{align}
S_{\star}(m,x) = & \frac{1}{2}\log(k-1)+\frac{1}{2}\log(1-m^{2})-k\lambda^{2}m^{2k-2}(1-m^{2})-(x-\lambda m^{k})^{2} \\
 & +\frac{2k}{k-1}\frac{x^{2}}{4}-\frac{1}{2}\intop_{2}^{\sqrt{\frac{2k}{k-1}}|x|}\sqrt{y^{2}-4}\d y \cdot 1_{ \{ {|x|} \geq \sqrt{\frac{2(k-1)}{k}} \} }
\end{align}

Isolating the dependence on $x$, to maximize $S_{\star}(m,x)$ we
must do the optimization problem:
\begin{equation}
-\frac{1}{2}\min_{u}\left\{ \frac{k-2}{2k}u^{2}-4(\lambda m^{k})\sqrt{\frac{k-1}{2k}}u+\intop_{2}^{u}\sqrt{y^{2}-4}\d y \cdot 1_{ \{ {|u|} \geq 2 \} } \right\} 
\end{equation}
where we have made the substitution $u=\sqrt{\frac{2k}{k-1}}x$. This
is exactly the setting of Lemma \ref{lem:maximization_problem} with
$a=\frac{k-2}{2k}$ and $b=4\lambda m^{k}\sqrt{\frac{k-1}{2k}}$. The
consideration $b\gtrless 4a$ leads to the definition of $m_{c}$ and
the two separate solutions $S_{U},S_{G}$. When $b<4a$, the formula
for $S_{U}$ is follows using the solution to the maximization problem
in this region: $-b^{2}/4a=-\frac{4(k-1)}{k-2}\lambda^{2}m^{2k}$ and
simplifying the resulting expression.

In the other region, when $b>4a$, using our $a,b$ values we compute
that the maximizing $u$ is:
\[
u^{\ast}=\frac{k}{\sqrt{k-1}}\sqrt{\frac{1}{2} k(\lambda m^{k})^{2}+1}-\sqrt{\frac{k}{2(k-1)}}(k-2)\lambda m^{k}
\]

The min value is (after some simplifying):
\begin{align*}
-\frac{1}{2} bu^{\ast}-2\log\left((\frac{1}{2}-a)u^{\ast}+\frac{1}{2} b\right) = & -2\sqrt{\frac{1}{2} k}\lambda m^{k}\sqrt{\frac{1}{2} k(\lambda m^{k})^{2}+1}+(k-2)\left(\lambda m^{k}\right)^{2}\\
  & -2\log\left(\sqrt{\frac{1}{2} k(\lambda m^{k})^{2}+1}+\sqrt{\frac{1}{2} k}\lambda m^{k}\right)+\log(k-1)\\
  = & -\sqrt{\frac{1}{2} k}\lambda m^{k}2\sqrt{\frac{1}{2} k(\lambda m^{k})^{2}+1}+(k-2)\left(\lambda m^{k}\right)^{2}\\
   & -2\sinh^{-1}\left(\sqrt{\frac{1}{2} k}\lambda m^{k}\right)+\log(k-1)
\end{align*}
(We have used $\sinh^{-1}(x)=\log(x+\sqrt{x^{2}+1})$. Plugging
back into the formula for $S_\star(m,x)$ now gives the desired result for $S_{G}$ .
\end{proof}

\begin{lemma}
\label{lem:maximization_problem}Let $a>0$ and $b>0$
be parameters. Let 
\[
g(x)=ax^{2}-bx+\intop_{2}^{|x|}\sqrt{y^{2}-4}\d y\cdot1_{\{|x|>2\}}.
\]

Then:
\[
\arg\min_{x}\left\{ g(x)\right\} =\begin{cases}
\frac{b}{2a} & 0<b<4a\\
x^{\ast} & b>4a
\end{cases},
\]

where:
\[
x^{\ast}:=\frac{2ab-\sqrt{b^{2}+4-16a^{2}}}{4a^{2}-1}.
\]

Moreover:
\[
\min_{x}\left\{ g(x)\right\} =\begin{cases}
-\frac{b^{2}}{4a} & 0<b<4a\\
-\frac{1}{2} bx^{\ast}-2\log\left(\left(\frac{1}{2}-a\right)x^{\ast}+\frac{1}{2} b\right) & b>4a
\end{cases}
\]
\end{lemma}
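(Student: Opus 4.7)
The plan is to analyze $g$ as a piecewise smooth function on $[0,\infty)$ and split into the cases $0<b<4a$ and $b>4a$ according to whether the critical point of the inner quadratic piece lies in the smooth region $[0,2]$ or not.

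First, I would argue that the minimizer lies in $[0,\infty)$. For $x<0$ the integral term is non-negative and $-bx>0$, so $g(x)>0=g(0)$, whereas $g$ attains negative values at $b/(2a)$ and at the candidate $x^{*}$ below. On $[0,\infty)$ I can drop the absolute value in the integral; since $\sqrt{y^{2}-4}$ vanishes at $y=2$, the function $g$ is $C^{1}$, with
\begin{equation*}
g'(x)=\begin{cases}2ax-b, & 0\le x\le 2,\\ 2ax-b+\sqrt{x^{2}-4}, & x>2.\end{cases}
\end{equation*}
On $(2,\infty)$ the derivative $g'$ is strictly increasing from $4a-b$ to $+\infty$.

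In the case $0<b<4a$, the root $b/(2a)$ of the inner derivative lies in $(0,2)$, and on $(2,\infty)$ we have $g'(x)>g'(2)=4a-b>0$. Hence $b/(2a)$ is the unique global minimizer, with
\begin{equation*}
g\bigl(b/(2a)\bigr)=a\cdot\tfrac{b^{2}}{4a^{2}}-b\cdot\tfrac{b}{2a}=-\tfrac{b^{2}}{4a},
\end{equation*}
as claimed. In the case $b>4a$, instead $g'<0$ throughout $[0,2]$ and $g'(2^{+})=4a-b<0$, so the unique minimizer $x^{*}>2$ satisfies the stationarity relation
\begin{equation*}
\sqrt{(x^{*})^{2}-4}=b-2ax^{*},
\end{equation*}
which forces $x^{*}\le b/(2a)$ (ensuring the right-hand side is non-negative). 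Squaring produces the quadratic $(4a^{2}-1)(x^{*})^{2}-4abx^{*}+(b^{2}+4)=0$; selecting the unique root lying in $(2,b/(2a)]$ gives the formula for $x^{*}$ stated in the lemma. One has to check that the numerator $2ab-\sqrt{b^{2}+4-16a^{2}}$ and denominator $4a^{2}-1$ carry consistent signs so that $x^{*}$ is positive uniformly in the sign of $4a^{2}-1$ (the degenerate case $4a^{2}=1$ being handled separately by the linear equation $-4abx+b^{2}+4=0$).

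Finally, to obtain the closed form for $\min g$ in the second case, I would substitute the antiderivative
\begin{equation*}
\int_{2}^{x}\sqrt{y^{2}-4}\,\d y=\tfrac{1}{2}x\sqrt{x^{2}-4}-2\log\!\Big(\tfrac{x+\sqrt{x^{2}-4}}{2}\Big)
\end{equation*}
into $g(x^{*})$ and exploit the stationarity relation $\sqrt{(x^{*})^{2}-4}=b-2ax^{*}$. The integral becomes $\tfrac{1}{2}bx^{*}-a(x^{*})^{2}-2\log\!\bigl(\tfrac{(1-2a)x^{*}+b}{2}\bigr)$; adding $a(x^{*})^{2}-bx^{*}$ the quadratic terms cancel and the linear term reduces to $-\tfrac{1}{2}bx^{*}$, yielding
\begin{equation*}
\min g=-\tfrac{1}{2}bx^{*}-2\log\!\Bigl(\bigl(\tfrac{1}{2}-a\bigr)x^{*}+\tfrac{b}{2}\Bigr).
\end{equation*}
The main obstacle is essentially bookkeeping: picking the correct root of the quadratic (which requires care since $4a^{2}-1$ changes sign across $a=1/2$) and carrying out the cancellations in $g(x^{*})$; both steps are elementary once the stationarity relation is used as a replacement for $\sqrt{(x^{*})^{2}-4}$.
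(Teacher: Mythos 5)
Your proposal is correct and follows essentially the same route as the paper's proof: exploit monotonicity of $g'$, solve the stationarity relation $\sqrt{x^{2}-4}=b-2ax$ by squaring, and substitute that relation together with the closed form of $\int_{2}^{x}\sqrt{y^{2}-4}\,\d y$ to simplify $g(x^{*})$. Your additional checks (restricting to $x\ge 0$, verifying which root of the quadratic lies in $(2,b/(2a)]$ for both signs of $4a^{2}-1$, and the degenerate case $4a^{2}=1$) are sound and in fact slightly more careful than the paper's one-line appeal to the quadratic formula.
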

\begin{proof}
We notice that $g^{\prime}$ is monotone increasing, and hence
$g$ has a unique minimum which occurs when:
\[
b-2ax=\text{sgn}(x)\sqrt{|x|^{2}-4}\cdot1_{\left\{ |x|>2\right\} }
\]

If $-4a<b<4a$, then this occurs at $b/2a$. Otherwise, since we consider
only the case $b>0$, we have a solution with $x>2$ and have $b-2ax=\sqrt{x^{2}-4}.$
The quadratic formula then gives the formula for argmin. To see the
formula for the minimum, we use the closed form for the integral:
\[
\intop_{2}^{x}\sqrt{y^{2}-4}\d y=\frac{1}{2} x\sqrt{x^{2}-4}-2\log\left(\frac{x}{2}+\frac{1}{2}\sqrt{x^{2}-4}\right)
\]

Substituting the identity $b-2ax=\sqrt{x^{2}-4}$ then gives
the formula for $\min(g(x))$. 
\end{proof}

\begin{proof} (Of Proposition \ref{prop:S_G_analysis}) For any value of $\alpha \in(0,1)$, define:
\[
f_{\alpha}(x):=\frac{1}{2}\ln(1-\alpha)-\frac{2x^{2}}{\alpha}+x^{2}+x\sqrt{1+x^{2}}+\sinh^{-1}(x)
\]

It can be verified by computing the derivative, that this function
has exactly one maximum at $x_{\alpha}::=\frac{1}{2}\frac{\alpha}{\sqrt{1-\alpha}}$
and that $f_{\alpha}\left(x_{\alpha}\right)=0$. In particular, $f_{\alpha}(x)\leq0$
for all $x$. Now notice that we may write:
\[
S_{G}(m)=f_{m^{2}}\left(\sqrt{\frac{1}{2} k}\left(\lambda m^{k}\right)\right)
\]

This shows $S_{G}(m)\leq0$. The consideration about the zeros of
$f_{c}$ shows that $S_{G}$ has a zero only when $\sqrt{\frac{1}{2} k}\left(\lambda m^{k}\right)=\frac{1}{2}\frac{m^{2}}{\sqrt{1-m^{2}}}$,
which is equivalent to equation (\ref{eq:good_location}). Elementary calculus reveals that the polynomial $m^{2k-4}(1-m^{2})$ achieves a maximum value of  $\frac{(k-2)^{k-2}}{(k-1)^{(k-1)}}$ and this observation yields the desired properties for $\lambda_{c}$.
\end{proof}

\bibliographystyle{amsalpha}
\newcommand{\etalchar}[1]{$^{#1}$}
\ifx\undefined\bysame
\newcommand{\bysame}{\leavevmode\hbox to3em{\hrulefill}\,}
\fi

\end{document}